\documentclass[11pt,a4paper,reqno]{amsart}

\usepackage{amsmath,amssymb,mathtools}
\usepackage{enumitem}
\usepackage{needspace}
\usepackage{microtype}
\usepackage[hidelinks,pdfusetitle]{hyperref}

\numberwithin{equation}{section}

\newtheorem{theorem}{Theorem}[section]
\newtheorem{proposition}[theorem]{Proposition}
\newtheorem{lemma}[theorem]{Lemma}
\newtheorem{corollary}[theorem]{Corollary}
\theoremstyle{definition}

\theoremstyle{remark}
\newtheorem{remark}[theorem]{Remark}

\newcommand{\E}{\mathbb E}
\newcommand{\Pp}{\mathbb P}
\newcommand{\R}{\mathbb R}
\newcommand{\N}{\mathbb N}
\newcommand{\cP}{\mathcal P}
\newcommand{\K}{\mathcal K}

\newcommand{\Law}{\mathcal L}
\newcommand{\1}{\mathbf 1}
\newcommand{\dd}{\,\mathrm d}
\newcommand{\TV}{\mathrm{TV}}

\DeclareMathOperator{\supp}{supp}

\title[ONE MARKED-SUM LAW DETERMINES MASS PARTITIONS]
{One marked-sum law determines random mass partitions and
\texorpdfstring{$\Xi$}{Xi}-coalescents}
\author{Jacopo Lenzi}
\thanks{ORCID: 0000-0003-2882-4223.}
\address{Department of Biomedical and Neuromotor Sciences\\
University of Bologna, Via San Giacomo 12, 40126 Bologna, Italy}
\email{jacopo.lenzi2@unibo.it}
\date{September 11, 2026}

\subjclass[2020]{Primary 60G09, 60J90; Secondary 60G55, 60B10}
\keywords{random mass partition, Kingman simplex, randomly weighted mean,
determining mark distribution, coalescent with simultaneous multiple collisions,
Fleming--Viot process}
\hypersetup{
  pdftitle={One marked-sum law determines random mass partitions and Xi-coalescents},
  pdfauthor={Jacopo Lenzi},
  pdfkeywords={random mass partition, Kingman simplex, determining mark
  distribution, coalescent with simultaneous multiple collisions,
  Fleming--Viot process}
}

\begin{document}
\raggedbottom

\begin{abstract}
Let $P=(P_j)$ be a random mass partition and let $(X_j)$ be iid real marks,
independent of $P$.  We construct a fixed mark distribution for which the
law of the real random variable $\sum_jP_jX_j$ determines the law of $P$ on
the Kingman simplex.  The observation map is an affine topological
embedding.  One construction uses an infinite convolution of stable
laws with rationally independent indices; for every prescribed
$q\in(0,\infty)$, a determining mark can instead be chosen symmetric,
centered, compound Poisson, and in $L^q$.  At any known positive time, the
law of the ranked block frequencies of a $\Xi$-coalescent started from
singletons---equivalently,
the exchangeable partition probability functions of all its finite
restrictions at that time---determines the finite
collision measure $\Xi$.  Composing the two results therefore identifies
$\Xi$ from the law of one real marked sum.  Within the $\Lambda$ subclass,
an asymmetric Bernoulli coloring determines the normalized collision
measure, as does any centered nonconstant mark whose absolute moment of
some order $a\in(1,4)$ is finite while every higher absolute moment is
infinite.  We also prove that, for every
$d\ge2$, a pair of distinct laws supported on strictly decreasing
$(d+1)$-tuples of positive masses with a fixed total mass remains
indistinguishable for every mark supported on at most $d$ points.  For every
fixed number $d\ge2$ of types, this obstruction yields distinct
normalized collision measures with the same mutation-free neutral $d$-type
$\Xi$-Fleming--Viot transition semigroup.
\end{abstract}

\maketitle

\section{Introduction}\label{sec:introduction}

We work on the Kingman simplex
\begin{equation}\label{eq:K-intro}
 \K=\left\{p_1\ge p_2\ge\cdots\ge0:
                   \sum_{j\ge1}p_j\le1\right\}
\end{equation}
and equip it with the coordinatewise product topology.
Given a random mass partition $P\sim\mu\in\cP(\K)$ and iid copies
$X_1,X_2,\ldots$ of an integrable real random variable $X$, independent of
$P$, consider
\begin{equation}\label{eq:TX-intro}
 M_X(P)=\sum_{j\ge1}P_jX_j,
 \qquad
 T_X(\mu)=\Law(M_X(P)).
\end{equation}
The series is absolutely convergent almost surely.  We call the distribution
of a centered $X\in L^1$ \emph{determining for random mass partitions} if
$T_X$ is injective on $\cP(\K)$.  The question is whether a single fixed
real mark distribution can be determining on the whole simplex.

Missing mass
$1-\sum_jP_j$ is the dust component, and atomic mass can become dust under
coordinatewise convergence.  Centered marks assign value zero to that
component and make $T_X$ continuous.  Dust is nevertheless determined once
the positive coordinates have been recovered, although it is not itself a
continuous coordinate.

This inverse problem concerns the mixing law $\mu$, rather than an unknown
mark distribution in a fixed weight model.  The latter direction has been
studied for normalized random measures by
Gaffi et~al.~\cite{GaffiLijoiPrunster2025}.  For random weighted averages,
Pitman~\cite[Corollary~9]{Pitman2018} proved, for proper mass partitions, that the
collection of output laws obtained by allowing all simple mark distributions
determines the associated partition structure.  Here the mark distribution
is held fixed, dust is allowed, and the input ranges over all laws on $\K$.
Via the paintbox--de Finetti correspondence, Bernoulli coloring is a
Bernoulli weighted sum of the paintbox masses, up to the dust
term \cite[Lemma~3.12]{SteifTykesson2019}.  A fixed Bernoulli parameter
$q\in(0,1)$
separates deterministic paintboxes \cite[Proposition~3.13]{SteifTykesson2019}, and for a parameter different from $1/2$ it identifies
residual-allocation laws generated by iid factors
\cite[Theorem~1.2]{BjornbergMaillerMortersUeltschi2020}.  Neither statement
gives injectivity on arbitrary mixing laws on $\K$.

A Gaussian mark is not determining even for deterministic proper mass
partitions: $(1/2,1/2)$ and $(2/3,1/6,1/6)$ both give the law $N(0,1/2)$
for standard Gaussian marks.  Our construction uses countably many distinct
power sums to retain information lost by a single stable component.
Choose rationally independent indices $\alpha_r\in(3/2,2)$ decreasing to
$3/2$ and positive coefficients $c_r$ that decrease sufficiently fast.  The
resulting mark has characteristic function
\begin{equation}\label{eq:stable-cf-intro}
 \phi_{X_*}(u)=
 \exp\left\{-\sum_{r\ge1}c_r|u|^{\alpha_r}\right\}.
\end{equation}
Conditionally on $P=p$, its marked sum has characteristic function
\begin{equation}\label{eq:stable-kernel-intro}
 \exp\left\{-\sum_{r\ge1}c_r|u|^{\alpha_r}V_{\alpha_r}(p)\right\},
 \qquad V_s(p)=\sum_{j\ge1}p_j^s.
\end{equation}
After the exponential is expanded, uniqueness of the Laplace transform of a
finite signed measure separates every mixed moment of the countable vector
$(V_{\alpha_r}(P))_{r\ge1}$.  It therefore recovers the joint law of these
power sums; their expectations alone need not determine the law of a random
mass partition.  This step remains valid although the
exponents have a finite accumulation point.  The profile itself determines
$p$: the holomorphic Dirichlet series $\sum_jp_j^z$ is known on a sequence
accumulating inside $\{\Re z>1\}$, and its successive largest terms recover
all positive masses and their multiplicities.

Lin~\cite[Theorem~2.2.1 in the arXiv version]{Lin2016} already uses rational
independence to recover mixed moments.  His observation consists of the
laws of all finite partial sums of an exchangeable sequence whose terms take
values in a rationally independent countable set with no finite accumulation point.
Here a single weighted-sum law suffices, and the exponents may accumulate.

\begin{theorem}[A determining mark from stable components]
\label{thm:intro-identifying}
There is a nonconstant, symmetric, centered and infinitely divisible random
variable $X_*$ such that
\[
 \E|X_*|^{3/2}<\infty,
 \qquad
 \E|X_*|^q=\infty\quad(q>3/2),
\]
and
\[
 T_{X_*}:\cP(\K)\longrightarrow\cP(\R)
\]
is an affine topological embedding for the weak topologies.
\end{theorem}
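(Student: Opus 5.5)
We construct $X_*$ explicitly and then verify, in order, its moment properties, continuity and affinity of $T_{X_*}$, injectivity, and the embedding property.

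\emph{Construction and moments.} Choose rationally independent $\alpha_r\in(3/2,2)$ decreasing to $3/2$, and $c_r>0$ with $\sum_r c_r<\infty$ decaying fast enough (to be fixed below). Let $Y_r$ be independent standard symmetric $\alpha_r$-stable variables with $\E e^{iuY_r}=e^{-|u|^{\alpha_r}}$, and set $X_*=\sum_r c_r^{1/\alpha_r}Y_r$. Then $X_*$ has characteristic function \eqref{eq:stable-cf-intro}, and it is symmetric and infinitely divisible, with Lévy measure $\nu(\dd x)=\sum_r c_r k_r|x|^{-1-\alpha_r}\dd x$. Since every $\alpha_r>3/2$, we have $\E|X_*|^q=\infty$ for $q>3/2$: already $\E|Y_1|^q=\infty$ once $q\ge\alpha_1$, and for $3/2<q<\alpha_1$ the tail of $\nu$ exceeds $c_r k_r|x|^{-\alpha_r}$ for every $r$ with $\alpha_r<q$. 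We get $\E|X_*|^{3/2}<\infty$ by choosing $c_r$ so that $\int_{|x|>1}|x|^{3/2}\nu(\dd x)=\sum_r c_r k_r\cdot 2/(\alpha_r-3/2)<\infty$, which holds if $c_r\le 2^{-r}(\alpha_r-3/2)/k_r$. This moment of the Lévy measure controls $\E|X_*|^{3/2}$ by standard results. Symmetry together with $\E|X_*|<\infty$ gives centering.

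\emph{Continuity and affinity.} Affinity is immediate, because $T_X$ is a mixture of the kernel $\Law(M_X(p))$ against $\mu$. For continuity it suffices to show that $p\mapsto\Law(M_{X_*}(p))$ is weakly continuous on $\K$. Equivalently, $p\mapsto\sum_r c_r|u|^{\alpha_r}V_{\alpha_r}(p)$ must be continuous in the product topology. Since $\alpha_r>1$, the map $V_s$ is continuous on $\K$ for $s>1$: the tail satisfies $\sum_{j>J}p_j^s\le p_J^{s-1}\le J^{1-s}$, uniformly in $p$. The series in $r$ converges uniformly because $V_s\le1$. Dominated convergence then gives continuity of $T_{X_*}$. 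Since $\cP(\K)$ is compact ($\K$ is compact metrizable), a continuous injection into the Hausdorff space $\cP(\R)$ is automatically a topological embedding. Everything therefore reduces to injectivity.

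\emph{Injectivity.} This is the main step. Write $W_r=V_{\alpha_r}(P)\in[0,1]$. The characteristic function of $T_{X_*}(\mu)$ at $u>0$ equals $\E\exp\{-\sum_r c_ru^{\alpha_r}W_r\}$. Expand the exponential into the series $\sum_{n}\frac{(-1)^n}{n!}\E(\sum_r c_ru^{\alpha_r}W_r)^n$. Multinomially, this is a generalized Dirichlet-type series $\sum_{\beta}a_\beta u^{\langle\beta,\alpha\rangle}$ over finitely supported multi-indices $\beta$, where $a_\beta=(-1)^{|\beta|}\prod_r c_r^{\beta_r}/\beta_r!\cdot\E\prod_r W_r^{\beta_r}$. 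The sum is absolutely convergent because $\sum_\beta|a_\beta|u^{\langle\beta,\alpha\rangle}\le e^{\sum c_r u^{\alpha_r}}$. Rational independence makes the exponents $\langle\beta,\alpha\rangle$ pairwise distinct across $\beta$. Substituting $u=e^{-t}$ turns the series into the Laplace transform of the finite signed discrete measure $\sum_\beta a_\beta\delta_{\langle\beta,\alpha\rangle}$ on $[0,\infty)$; absolute convergence for all $t\in\R$ ensures finiteness. Uniqueness of Laplace transforms of finite signed measures on $[0,\infty)$ recovers every coefficient $a_\beta$. This works even though the atoms accumulate: the exponents $\langle\beta,\alpha\rangle$ cluster, for instance near $n\cdot3/2$, but the measure is still a finite signed measure. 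We thus recover all mixed moments of the bounded vector $(W_r)$, and hence its law on $[0,1]^{\N}$.

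It remains to show that $p\mapsto(V_{\alpha_r}(p))_r$ is a measurable injection of $\K$. Then equality of laws of $(W_r)$ yields equality of the laws of $P$, via Lusin–Souslin or directly via continuity on the compact space $\K$. The function $F_p(z)=\sum_jp_j^z$ is holomorphic on $\Re z>1$, and its values on $\alpha_r\to3/2$ determine it by the identity theorem, so $F_p(s)$ is known for all real $s>1$. Then $p_1=\lim_{s\to\infty}F_p(s)^{1/s}$ and its multiplicity is $\lim_{s\to\infty} F_p(s)/p_1^s$. Subtracting these terms and iterating recovers all positive masses; the zero profile gives $p=0$. The identity-theorem step and the Laplace-uniqueness step with accumulating exponents are where care is needed.
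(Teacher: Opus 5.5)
Your proposal is correct and follows essentially the same route as the paper: expand the conditional characteristic function $\exp\{-\sum_r c_r|u|^{\alpha_r}V_{\alpha_r}(p)\}$ into an absolutely summable generalized Dirichlet series, and use rational independence with uniqueness of Laplace transforms of finite signed measures (despite the accumulating exponents) to recover all mixed moments of the profile $(V_{\alpha_r}(P))_{r\ge1}$; then invert the profile via the identity theorem for $\sum_jp_j^z$ and successive extraction of the largest masses, and conclude by compactness of $\cP(\K)$. The only variation is in the moment step, where you use the L\'evy-measure criterion (Sato, Theorem~25.3) in both directions instead of the paper's $L^{3/2}$ Minkowski summation and its splitting of $X_*$ into one stable component plus an independent remainder; your aside about $\E|Y_1|^q=\infty$ would need that criterion or such a splitting to transfer to $X_*$, but the L\'evy-measure argument you give already covers every $q>3/2$.
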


Compound Poisson marks allow the moment boundary to be placed beyond any
prescribed finite order.

\begin{theorem}[Prescribed integrability]\label{thm:compound-poisson}
For every $q\in(0,\infty)$, there is a nonconstant symmetric centered
compound Poisson random variable $X^{(q)}\in L^q$ such that
\[
 T_{X^{(q)}}:\cP(\K)\longrightarrow\cP(\R)
\]
is an affine topological embedding for the weak topologies.
\end{theorem}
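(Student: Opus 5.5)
The plan is to replace the stable components of Theorem~\ref{thm:intro-identifying} by compound Poisson components with the same small-$u$ behaviour. Injectivity would then follow from a local expansion of the characteristic function near $u=0$.

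Fix $q$ and choose an integer $m>q$. Take $X^{(q)}$ compound Poisson with symmetric jump law $\nu$ of total mass $\lambda$, so that
\[
 \phi(u)=\exp\{-\lambda(1-\hat\nu(u))\}.
\]
Conditionally on $P=p$, the marked sum has characteristic function
\[
 \prod_j\phi(p_ju)=\exp\Bigl\{-\sum_j\lambda\bigl(1-\hat\nu(p_ju)\bigr)\Bigr\}.
\]
This product converges because $1-\hat\nu(v)=O(v^2)$ once $\nu$ has a second moment, and then $\sum_j p_j^2<\infty$. Dust contributes nothing.

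Next, choose $\nu$ so that its Fourier transform encodes countably many power sums at once. For example, let $\nu$ have density proportional to a mixture $\sum_r c_r|x|^{-1-\alpha_r}$ on $|x|\ge1$, with rationally independent $\alpha_r$ decreasing to some $\alpha_\infty>q$ and $\alpha_r<2$ wherever possible. To keep $X\in L^q$ for arbitrarily large $q$, replace the exponents by the non-even exponents $2k+\beta_r$ through a smooth tail $|x|^{-1-(2k+\beta_r)}$ with $2k+\beta_r>q$. Then
\[
 1-\hat\nu(v)=\sum_{i\le k}a_iv^{2i}+\sum_r c_r' |v|^{2k+\beta_r}+(\text{smooth remainder}),
\]
and the conditional log-characteristic function is
\[
 -\sum_i a_i u^{2i} V_{2i}(p)-\sum_r c_r'|u|^{2k+\beta_r}V_{2k+\beta_r}(p)+\cdots .
\]
Even-power contributions are analytic in $u$. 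The singular terms $|u|^{s}$ with non-even $s$ are what the argument exploits, in analogy with the stable case.

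The identification step mixes over $\mu$ and expands the exponential. The aim is to show that $T_X(\mu)$ determines every mixed moment $\E\prod V_{s_r}(P)^{n_r}$, where $s_r=2k+\beta_r$. The key tool is a uniqueness lemma for asymptotic expansions at $u\to0$ in the generalized monomials $|u|^{\gamma}$, where $\gamma$ ranges over the additive semigroup generated by the $s_r$ and $2$. By rational independence the exponents $\sum n_rs_r+2\ell$ are pairwise distinct, so each coefficient is identified. This needs a determinacy argument, since $\E V_{2}(P)^n\le1$ makes the moments bounded and all the relevant variables lie in $[0,1]$.

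Once the joint law of $(V_{s_r}(P))_r$ is known, the argument of Theorem~\ref{thm:intro-identifying} applies. The Dirichlet series $\sum_jp_j^z$ is known on a sequence accumulating at $\alpha_\infty>1$, and its largest terms recover $p$ together with the dust. This gives injectivity. Affinity is immediate. Continuity holds because each $V_s$ with $s>1$ is continuous on $\K$ and the characteristic function depends continuously on $p$. An embedding then follows from compactness of $\cP(\K)$ and Hausdorffness of $\cP(\R)$.

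The main obstacle is the expansion-uniqueness step. With accumulating exponents, an asymptotic expansion at $0$ does not obviously separate terms, and the smooth remainder must be shown to add only analytic, even-power terms. I would first try to avoid the expansion entirely. The idea is to choose $\nu$ so that $1-\hat\nu(v)=\sum_r c_r\psi(v)^{\cdots}$ has an exact Laplace-transform structure, for instance by making $\nu$ a mixture of laws whose Fourier transforms are exactly $e^{-|v|^{s}}$-type or $|v|^{s}$-truncated. Then the Laplace-transform uniqueness for finite signed measures quoted in the introduction can be reused. Failing that, I would take $\nu$ to be a mixture of symmetric scaled copies of the stable mark $X_*$ conditioned via a Poisson subordination. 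A subordinated stable law keeps the $|u|^{\alpha_r}$ structure in the exponent while allowing thin tails, and a truncation argument would control $L^q$.
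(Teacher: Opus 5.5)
\textbf{There is a genuine gap: the identification step, which you yourself flag, is not closed, and the tool you propose for it cannot close it.} Your construction is otherwise the paper's in outline: a finite symmetric L\'evy measure with tails $|x|^{-1-s_r}$ on $|x|>1$, non-even exponents in one gap $(2k,2k+2)$ above $q$ and decreasing to a limit, followed by Lemma~\ref{lem:profile-injective}.

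Uniqueness of asymptotic expansions at $u\to0$ peels off coefficients from the smallest exponent upward. Your fractional exponents $s_r$ decrease to $\alpha_\infty$, so below every $s_r$ lie infinitely many $s_{r'}$, and their terms dominate $|u|^{s_r}$ as $u\to0$. There is no leading fractional term, and no limiting procedure at $u=0$ isolates a single coefficient of $\sum_r a_r|u|^{s_r}$. A remainder known only to be smooth, or only up to an error term, is equally insufficient, since nothing then stops it from contributing at the exponents $\sum_r n_r s_r$. What you need is:
\begin{enumerate}[label=(\roman*)]
\item an exact identity on a whole interval of $u$, with absolutely summable coefficients;
\item rational independence of $\{2,s_1,s_2,\dots\}$, not just of the $s_r$;
\item a proof that the fractional coefficients are nonzero.
\end{enumerate}

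The paper supplies this in Lemma~\ref{lem:fractional-entire}. For $2M<\alpha<2M+2$, it shows $\int_1^\infty(1-\cos ux)x^{-1-\alpha}\dd x=C_\alpha|u|^\alpha+A_\alpha(u)$. Here $C_\alpha=\pi/\{2\Gamma(1+\alpha)\sin(\pi\alpha/2)\}\neq0$, and $A_\alpha(u)=\sum_{\ell\ge1}(-1)^{\ell+1}u^{2\ell}/\{(2\ell)!(\alpha-2\ell)\}$ is even and entire, locally bounded uniformly for $\alpha$ in compact subintervals. Summing over $r$ gives $\psi(u)=\sum_r\beta_r|u|^{\alpha_r}+\sum_\ell\gamma_\ell u^{2\ell}$, with $\sum_r|\beta_r|R^{\alpha_r}+\sum_\ell|\gamma_\ell|R^{2\ell}<\infty$ for every $R$.

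Expanding the mixed kernel at $u=t_0e^{-s}$ turns equality of the output laws into $\int e^{-s\lambda}\eta(\dd\lambda)=0$ for all $s>0$. Here $\eta=\sum_{m,k}B_{m,k}\delta_{m\cdot\alpha+2N(k)}$ is a finite signed measure even though its atoms accumulate. Lemma~\ref{lem:signed-laplace} then gives $\eta=0$. By rational independence, the atom at $m\cdot\alpha$ receives only the $k=0$ term, which is a nonzero multiple of the mixed fractional moment. The atoms with $N(k)\ge1$ are never resolved, because distinct $k$ with equal $N(k)$ collide, and they need not be.

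Your fallbacks do not replace this argument. $e^{-|v|^s}$ is a characteristic function only for $s\le2$. A jump law $\nu$ with $1-\hat\nu(u)\sim c|u|^{\alpha}$, $\alpha<2$, near the origin has infinite second moment. So subordinated stable jumps cannot produce $X\in L^q$ for $q\ge2$.
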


The proof, given in Appendix~\ref{app:compound-poisson}, separates terms with
fractional exponents from an analytic remainder in the characteristic
exponent.  The mark is chosen separately for each prescribed $q$.

We next ask whether the distribution of a coalescent at one positive time
determines its collision measure.  Coalescents
allowing simultaneous mergers arise in the limit classification of
exchangeable population models by M{\"o}hle and Sagitov~\cite{MohleSagitov2001}.  We use the
collision-measure representation and rates of Schweinsberg~\cite{Schweinsberg2000}.  Let
$P^\Xi(t)$ be the ranked positive asymptotic block frequencies at time
$t>0$ of the homogeneous $\Xi$-coalescent started from singletons, with
missing mass representing dust.  By Kingman's paintbox representation
\cite{Kingman1978}, the law of $P^\Xi(t)$ is equivalent to the collection
of exchangeable partition probability functions (EPPFs) of all
finite restrictions at that same time.  Thus the observation in the next
theorem may be expressed either as a law on $\K$ or as all of those
finite-dimensional partition probabilities.

\begin{theorem}[One-time recovery of $\Xi$]\label{thm:intro-xi}
For every known $t>0$, the map
\[
 \Xi\longmapsto\Law(P^\Xi(t))
\]
is a topological embedding from the space of finite nonnegative collision
measures on $\K$ into $\cP(\K)$, with both spaces equipped with weak
convergence.  The atom at the zero partition represents the Kingman component.
\end{theorem}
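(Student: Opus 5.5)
We have to show that the map $\Xi\mapsto\Law(P^\Xi(t))$ is injective and continuous, and that its inverse is continuous on the image. The plan is to work throughout with the EPPF description. By Kingman's correspondence, $\Law(P^\Xi(t))$ is equivalent to the family of laws of the restrictions $\Pi^{(n)}_t$ to $[n]$, and each $\Pi^{(n)}_t$ is a finite-state Markov chain with generator $Q_n=Q_n(\Xi)$. By Schweinsberg's formula, every entry of $Q_n$ is an integral against $\Xi$: the part on $\K\setminus\{0\}$ is integrated with weight $\nu_0(dp)/\sum p_j^2$, and the atom $\Xi(\{0\})$ contributes Kingman pair-merger rates. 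The integrands are continuous on $\K$ once the $1/\sum p_j^2$ normalization is paired with polynomial expressions in the $p_j$ and $1-\sum p_j$ that vanish to second order at $0$. This is the standard fact that makes $\Xi\mapsto Q_n(\Xi)$ weakly continuous.

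\textbf{Continuity of the forward map.} If $\Xi_k\to\Xi$ weakly, then $Q_n(\Xi_k)\to Q_n(\Xi)$ entrywise, so $e^{tQ_n(\Xi_k)}\to e^{tQ_n(\Xi)}$. The EPPFs of $\Pi^{(n)}_t$ from singletons are the first row of $e^{tQ_n}$, so all finite-dimensional partition probabilities converge. Weak convergence on the compact metrizable $\K$ is equivalent to convergence of all EPPFs, by the paintbox/Kingman continuity theorem. This gives continuity of the map.

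\textbf{Injectivity (the main obstacle).} One time $t$ gives $e^{tQ_n}$ only along the first row, and a matrix logarithm is not unique in general. To get around this, I would use that $Q_n$ is upper-triangular with respect to the number of blocks: mergers only decrease the block count. I would proceed by induction on $n$. Suppose the rates for $\Pi^{(m)}$, $m<n$, are known. Consistency then determines every rate of $Q_n$ except those out of the singleton state $\{\{1\},\dots,\{n\}\}$, namely the rates $\lambda_{n;k_1,\dots,k_r;s}$. The transition probabilities from singletons at time $t$ solve the forward equation $\frac{d}{dt}p_t(\mathbf 1_n,\pi)=\sum_\sigma p_t(\mathbf 1_n,\sigma)Q(\sigma,\pi)$. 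Here $p_t(\mathbf 1_n,\mathbf 1_n)=e^{-\lambda_n t}$ determines the total rate $\lambda_n$. For a target $\pi$ with $n-1$ or fewer blocks, write $p_t(\mathbf 1_n,\pi)=\int_0^t e^{-\lambda_n s}\lambda_{\mathbf 1_n\to\sigma}\,p_{t-s}(\sigma,\pi)\,ds$ summed over $\sigma$, where $p_{\cdot}(\sigma,\pi)$ involves only states with fewer blocks and is already known. This is a linear system in the unknown first-jump rates. Its matrix is indexed by $(\sigma,\pi)$ and is triangular with respect to refinement, with diagonal entries $\int_0^te^{-\lambda_ns}e^{-\lambda(\sigma)(t-s)}ds>0$. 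It is therefore invertible, and all rates of $Q_n$ are recovered. Knowing every $Q_n$, the mixed moments $\int \sum_{i_1,\dots}p_{i_1}^{k_1}\cdots p_{i_r}^{k_r}(1-\sum p)^s\,\Xi(dp)/\sum p_j^2$ are determined for all $n$. These functions, together with the Kingman atom, separate finite measures on $\K$, by Stone--Weierstrass on the compact $\K$, since power sums of order $\ge2$ are continuous and separate points. Hence $\Xi$ is determined.

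\textbf{Continuity of the inverse.} Suppose $\Law(P^{\Xi_k}(t))\to\Law(P^\Xi(t))$. First I need tightness, meaning bounded total masses $\Xi_k(\K)$. This follows from $\lambda_2=\Xi(\K)$ and $e^{-\lambda_2 t}=\Pp(1,2\text{ unmerged at }t)$, which converges, so the masses are bounded. Compactness of $\K$ then gives weakly convergent subsequences $\Xi_{k'}\to\Xi'$. By forward continuity, $\Law(P^{\Xi'}(t))=\Law(P^\Xi(t))$, and injectivity gives $\Xi'=\Xi$. Hence $\Xi_k\to\Xi$, which completes the embedding.
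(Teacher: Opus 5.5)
Your proposal is correct and follows the paper's proof essentially step for step. Consistency fixes the post-collision subgenerator, and the first-jump (variation-of-constants) identity gives a linear system that is triangular in the refinement order with strictly positive diagonal. Continuity of the inverse comes from $1-\E V_2(P^\Xi(t))=e^{-t\Xi(\K)}$, compactness and injectivity, exactly as in the paper.

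The deviations are minor. You recover $\Xi$ from the rate array by Stone--Weierstrass instead of Schweinsberg's uniqueness result. This works once you take one exponent equal to $2$ to cancel the weight $1/V_2$. Constants and nonempty products of the $V_k$, $k\ge2$, then form a dense unital algebra, and the products vanish at $0$, so the Kingman atom need not be known separately.

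Your stated reason for continuity of the normalized kernels is inaccurate, since expressions in $1-\sum_jp_j$ are not continuous in the product topology. The paper obtains continuity of the numerators by M\"obius inversion from products of power sums. It then computes the limit of the ratio at the origin: $1$ for a single binary merger and $0$ otherwise.
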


The inversion uses one endpoint row of each finite restriction.  At level
$n$, the generator after the first collision is already determined by the
levels below $n$.  Variation of constants then expresses the observed row as
the unknown row of direct collision rates times a triangular matrix with
strictly positive diagonal.  Induction recovers the full collision-rate
array, and the representation theorem of Schweinsberg~\cite{Schweinsberg2000} recovers
$\Xi$.

Combining Theorems~\ref{thm:intro-identifying} and \ref{thm:intro-xi}
shows that, for known $t>0$, the law
\[
 \Law\left(\sum_{j\ge1}P_j^\Xi(t)X_j^*\right)
\]
determines the finite collision measure $\Xi$.  The pairwise restriction
also gives
\[
 1-\E V_2(P^\Xi(t))=\exp\{-t\Xi(\K)\}.
\]
Consequently an unknown time is recovered under the normalization
$\Xi(\K)=1$; without normalization the endpoint determines the product
$t\Xi$, which is the maximal possible conclusion under deterministic time
rescaling.

There is a uniform obstruction when the mark has bounded support
cardinality.  It is stronger than choosing a counterexample after seeing the
locations and probabilities of the atoms: the same pair of input laws works
for every mark distribution in the prescribed class.

\begin{proposition}[Finite-support obstruction]
\label{prop:intro-finite}
Fix $d\ge2$ and $s\in(0,1]$, and let
\begin{equation}\label{eq:strict-face}
 \mathcal F_{d+1,s}^{\circ}
 =\left\{p\in\K:\begin{array}{l}
 p_1>\cdots>p_{d+1}>0,\quad p_j=0\ (j>d+1),\\
 \sum_{i=1}^{d+1}p_i=s
 \end{array}\right\}.
\end{equation}
Let $U$ be a nonempty relatively open subset of this fixed-total-mass slice,
and let $\delta>0$.
There are distinct laws $H_{d,s,+}$ and $H_{d,s,-}$, compactly supported in
$U$, such that
\[
 T_X(H_{d,s,+})=T_X(H_{d,s,-})
\]
for every real mark distribution supported on at most $d$ points.  In the
affine coordinates of this slice, the two laws have $C_c^\infty$ densities
and may be chosen so that
$0<\|H_{d,s,+}-H_{d,s,-}\|_{\mathrm{TV}}<\delta$.
\end{proposition}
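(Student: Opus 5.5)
The plan is to reduce the marked sum to finitely many linear images of $p$, and then to construct a signed $C_c^\infty$ perturbation of a smooth density on the slice that is invisible to every one of these linear maps.

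\emph{Reduction to linear maps.} Let $X$ take values in $\{x_1,\dots,x_d\}$ with probabilities $q_1,\dots,q_d$. Zero probabilities are allowed, which covers marks with fewer than $d$ atoms. Fix $p$ in the strict face $\mathcal F_{d+1,s}^{\circ}$, so that only the first $d+1$ coordinates of $p$ are nonzero. For a color map $\sigma:\{1,\dots,d+1\}\to\{1,\dots,d\}$, put
\[
 L_\sigma(p)=\Bigl(\textstyle\sum_{j:\sigma(j)=k}p_j\Bigr)_{k=1}^{d}.
\]
Conditioning on the colors $\sigma(j)$ of $X_1,\dots,X_{d+1}$ gives, for bounded measurable $f$,
\[
 \E f(M_X(p))=\sum_{\sigma}\Bigl(\prod_{j=1}^{d+1}q_{\sigma(j)}\Bigr)\, f\bigl(\langle x,L_\sigma(p)\rangle\bigr).
\]
Hence, for laws $H_\pm$ supported in the face, $T_X(H_+)=T_X(H_-)$ holds for every such $X$ as soon as $(L_\sigma)_\#H_+=(L_\sigma)_\#H_-$ for each of the finitely many maps $\sigma$. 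These conditions involve only $\sigma$ and not the values $x_k,q_k$, which is what makes the pair of laws uniform over the whole class of marks. Because $U$ lies in the strict face, points of $U$ are already ranked, so no reordering issue arises when these laws are viewed on $\K$.

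\emph{Kernel directions.} Use affine coordinates on the slice, which has dimension $d$, with tangent space $W=\{v\in\R^{d+1}:\sum v_i=0\}$. Since $\sigma$ maps $d+1$ indices into $d$ colors, some pair $i\ne j$ satisfies $\sigma(i)=\sigma(j)$. The direction $v_\sigma=e_i-e_j\in W$ is then nonzero and lies in $\ker L_\sigma$. Enumerate the finitely many directions obtained this way as $v_1,\dots,v_N$.

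\emph{Construction.} Choose $\psi\in C_c^\infty(U)$ with $\psi\not\equiv0$, and set $h=\partial_{v_1}\cdots\partial_{v_N}\psi$.
\begin{itemize}
\item $h$ is nonzero. Its Fourier transform is $\prod_k(i\langle v_k,\xi\rangle)\,\hat\psi(\xi)$. The polynomial factor vanishes only on a finite union of hyperplanes, and $\hat\psi$ is real-analytic and not identically zero, so their product is not identically zero.
\item $h$ is invisible to each $L_\sigma$. The constant-coefficient derivatives commute, so $h=\partial_{v_\sigma}g$ for some $g\in C_c^\infty$. Since $L_\sigma$ is constant along the lines in direction $v_\sigma$, for any bounded continuous $F$ we get $\int F(L_\sigma p)\,\partial_{v_\sigma}g(p)\,dp=0$ by integrating along those lines.
\item $h$ has total integral zero.
\end{itemize}
Next take $\rho\in C_c^\infty(U)$ to be a probability density with $\rho\ge c>0$ on $\operatorname{supp}\psi$. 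Define $H_{d,s,\pm}=(\rho\pm\varepsilon h)\,dp$ with $\varepsilon<c/\|h\|_\infty$ and $2\varepsilon\|h\|_1<\delta$ (up to the TV normalization). Both are probability laws with $C_c^\infty$ densities, compactly supported in $U$, and they are distinct because $h\neq0$. Their distance satisfies $0<\|H_+-H_-\|_{\TV}\le 2\varepsilon\|h\|_1<\delta$.

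\emph{Main obstacle.} The delicate step is the reduction in the first paragraph: checking that equality of the pushforwards under every $L_\sigma$ suffices simultaneously for all mark locations and weights. The remaining steps are routine Fourier and integration-by-parts facts.
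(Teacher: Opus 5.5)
Your proof is correct and is essentially the paper's construction: every pair $i<j$ is the only colliding pair of some coloring, so your operator $\prod_k\partial_{v_k}$ is, in the chart $\iota_s$, exactly the Vandermonde operator $\pm W(D)$ (the paper uses $W(D)^2$, and the square is not needed). The only difference is the verification step: you integrate by parts along the kernel lines of the aggregation maps $L_\sigma$, whereas the paper shows $\widehat h(tb(a))=0$ from $W(b(a))=0$ for every mark vector $a$ with at most $d$ distinct values, which is the Fourier-side form of the same fact.
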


The construction uses the projection obstruction of
Cuesta-Albertos et~al.~\cite[Theorem~3.5]{CuestaAlbertosFraimanRansford2007}.
A Vandermonde polynomial vanishes whenever two of the $d+1$ coefficients
agree, as they must for marks taking at most $d$ values.  A perturbation
obtained by applying a differential operator then gives the smooth localized
pair.  For $d=2$, the simultaneous obstruction is already present in
Holroyd's three-mass example
\cite[Section~4.3]{BjornbergMaillerMortersUeltschi2020}: the same pair has
equal Bernoulli convolutions for every parameter, and hence equal marked-sum
laws for every two-point real mark.  The proposition gives the corresponding
obstruction for every $d\ge2$, with arbitrary localization, smooth compactly
supported densities and arbitrarily small positive total-variation separation.

The $\Lambda$ subclass provides a structured exception to the finite-support
obstruction.  If each block is colored by a Bernoulli variable with a fixed
known parameter $q\in(0,1)\setminus\{1/2\}$ and dust is colored at its base
frequency, the
law of the resulting frequency at one positive time determines the
normalized $\Lambda$ measure.  The proof recovers its Hausdorff moments one
at a time from a strict monotonicity property of the finite backward
equations.  Under unit-mass normalization the same law also determines an
unknown time.  Identification also holds for every centered nonconstant
mark whose moment abscissa lies in $(1,4)$ and whose moment at that abscissa
is finite (Corollary~\ref{cor:lambda-boundary-mark}).  In that case the
marked law determines the expected power-sum profile, which identifies
$\Lambda$ through the absorption probabilities of its finite restrictions.
Identification from an entire one-lag finite-type $\Lambda$-Fleming--Viot
transition operator was established by
Koskela et~al.~\cite[Lemma~1]{KoskelaJenkinsSpano2018} for probability densities supported
on $[\eta,1]$, bounded above and away from zero, with a fixed mutation
mechanism.  Here, in the mutation-free setting, one transition law of the
two-type process from the single known frequency
$q\in(0,1)\setminus\{1/2\}$ identifies an
arbitrary normalized $\Lambda$ measure.

The finite-support construction has a complementary consequence for the
forward process.  After tilting $H_{d,s,+}$ and $H_{d,s,-}$ by $V_2$, the
factor $V_2$ cancels the denominator in the $\Xi$-Fleming--Viot jump
generator.  The resulting distinct normalized collision measures generate
the same mutation-free neutral $d$-type transition semigroup
(Theorem~\ref{thm:xifv-obstruction}).  The same pair works for every initial
type frequency and every time: even the complete neutral dynamics with a
fixed number of types can fail to identify the collision measure.  By
contrast, a determining real mark identifies it from the distribution of
one marked sum at a single positive time.

The power-sum convergence principle in \cite{LenziBreiman} is used only
in Corollary~\ref{cor:lambda-boundary-mark}, where it recovers the
expected power sums needed for identification within the $\Lambda$
subclass.  The determining-mark constructions, the one-time recovery of
$\Xi$, and the finite-type obstruction are proved independently.

The paper is organized as follows.  Section~\ref{sec:operator} establishes the
topology of the observation map and the functional criterion.
Section~\ref{sec:stable-mark} constructs determining marks,
Section~\ref{sec:xi-fixed} gives the one-time $\Xi$ inversion, and
Section~\ref{sec:finite-support} proves the finite-support obstruction.
Section~\ref{sec:lambda} treats the $\Lambda$ subclass, and
Section~\ref{sec:xifv} proves the finite-type Fleming--Viot obstruction.  The
prescribed-moment construction and the four-label inversion are presented in
the appendices.

\section{The observation operator on the Kingman simplex}
\label{sec:operator}

We begin with the topology and the two elementary analytic facts used
throughout.  Equip $[0,1]^{\N}$ with the metric
\begin{equation}\label{eq:product-metric}
 d(p,p')=\sum_{j\ge1}2^{-j}|p_j-p_j'|.
\end{equation}
The set $\K$ in \eqref{eq:K-intro} is the intersection of the closed ordering
constraints $p_j\ge p_{j+1}$ and the closed constraints
$\sum_{j=1}^Np_j\le1$, $N\ge1$.  It is therefore compact and metrizable.
Its Borel $\sigma$-field is the trace of the product Borel $\sigma$-field
and is generated by the coordinate maps.  The space $\cP(\K)$ is consequently
compact and metrizable for weak convergence.

For a finite signed measure $\nu$, write
$\|\nu\|_{\TV}:=|\nu|(\Omega)$ for its total-variation norm on the
underlying space $\Omega$.  For two probability measures, this norm is twice
the usual total-variation distance.

Let $X$ be integrable, with characteristic function $\phi_X$.  For
$p\in\K$ and iid copies $(X_j)$,
\begin{equation}\label{eq:absolute-marked-series}
 \E\sum_{j\ge1}p_j|X_j|
 =\E|X|\sum_{j\ge1}p_j\le \E|X|.
\end{equation}
Thus $\sum_jp_jX_j$ converges absolutely almost surely.  Moreover,
\begin{equation}\label{eq:product-absolute}
 \sum_{j\ge1}|1-\phi_X(up_j)|
 \le |u|\E|X|\sum_{j\ge1}p_j<\infty,
\end{equation}
so its characteristic function is the order-independent product
\begin{equation}\label{eq:kernel}
 k_u^X(p)=\E\exp\left\{iu\sum_{j\ge1}p_jX_j\right\}
          =\prod_{j\ge1}\phi_X(up_j).
\end{equation}

\begin{lemma}[Continuity of the conditional kernel]
\label{lem:kernel-continuity}
If $X\in L^1$ and $\E X=0$, then $k_u^X\in C(\K)$ for every $u\in\R$.
Consequently $T_X:\cP(\K)\to\cP(\R)$ is affine and weakly continuous.
\end{lemma}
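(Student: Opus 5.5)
The plan is to prove continuity of $k_u^X$ on $\K$ by controlling the tail of the product uniformly, using centering to handle the small coordinates. Then continuity and affinity of $T_X$ will follow from mixing over $\mu$ and Lévy's continuity theorem.

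\emph{Step 1: a uniform local bound.} Since $\E X=0$ and $X\in L^1$, we have
\[
 1-\phi_X(v)=\E\bigl(1+ivX-e^{ivX}\bigr),
 \qquad
 |1+ix-e^{ix}|\le\min(2|x|,x^2/2).
\]
Hence
\[
 |1-\phi_X(v)|\le |v|\,\varepsilon(|v|),
 \qquad
 \varepsilon(h):=\E\min\bigl(2|X|,\,h X^2/2\bigr).
\]
By dominated convergence, $\varepsilon(h)\to0$ as $h\downarrow0$.

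\emph{Step 2: the tail is uniformly close to $1$.} For $p\in\K$ and $j>N$ we have $p_j\le 1/j\le 1/N$, since $p$ is decreasing with total mass at most $1$. Therefore
\[
 \sum_{j>N}|1-\phi_X(up_j)|\le |u|\,\varepsilon(|u|/N)\sum_{j>N}p_j\le |u|\,\varepsilon(|u|/N),
\]
and this tends to $0$ uniformly in $p\in\K$. We then apply the elementary bound
\[
 \Bigl|\prod a_j-\prod b_j\Bigr|\le\sum|a_j-b_j|,
 \qquad |a_j|,|b_j|\le1,
\]
with $b_j=1$. It gives
\[
 \Bigl|\prod_{j>N}\phi_X(up_j)-1\Bigr|\le |u|\,\varepsilon(|u|/N).
\]
Consequently the finite products $k_{u,N}(p)=\prod_{j\le N}\phi_X(up_j)$ converge uniformly on $\K$ to $k_u^X$.

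\emph{Step 3: continuity.} Each $k_{u,N}$ depends continuously on finitely many coordinates, because $\phi_X$ is continuous. So $k_{u,N}\in C(\K)$ for the product topology, and the uniform limit $k_u^X$ is continuous. This step is the crux: centering is exactly what makes mass escaping into dust contribute nothing in the limit. Without $\E X=0$, the tail would contribute roughly $e^{iu\E X\cdot(\text{dust})}$, which fails to be continuous.

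\emph{Step 4: the map $T_X$.} Conditioning on $P$ gives $\widehat{T_X(\mu)}(u)=\int_\K k_u^X\,d\mu$. This is affine in $\mu$. It also shows that $T_X(\mu)$ is a mixture $\int\Law(M_X(p))\,\mu(dp)$; the kernel $p\mapsto\Law(M_X(p))$ is measurable because its characteristic functions are continuous in $p$. Hence $T_X$ is affine on convex combinations. If $\mu_n\to\mu$ weakly, then $\int k_u^X\,d\mu_n\to\int k_u^X\,d\mu$ for every $u$, since $k_u^X\in C(\K)$ is bounded. Lévy's continuity theorem then yields $T_X(\mu_n)\to T_X(\mu)$ weakly. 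Since $\cP(\K)$ is metrizable, sequential continuity suffices.
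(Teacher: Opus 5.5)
Your proposal is correct and follows essentially the same route as the paper: a uniform tail bound for $\sum_{j>N}|1-\phi_X(up_j)|$ from centering and $p_j\le 1/j$, the product inequality giving uniform convergence of cylinder products on $\K$, and L\'evy's continuity theorem for $T_X$. The only difference is that you make the small-$v$ modulus explicit as $\varepsilon(h)=\E\min(2|X|,hX^2/2)$. The paper instead obtains the modulus $\eta(h)$ by appealing to differentiability of $\phi_X$ at the origin.
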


\begin{proof}
Differentiability of the characteristic function at the origin and centering
give
\[
 \eta(h):=\sup_{0<|v|\le h}\frac{|1-\phi_X(v)|}{|v|}
 \longrightarrow0\qquad(h\downarrow0).
\]
For a ranked subprobability sequence,
$(N+1)p_{N+1}\le1$.  Hence, uniformly in $p\in\K$,
\begin{align}
 \sum_{j>N}|1-\phi_X(up_j)|
 &\le |u|\eta\left(\frac{|u|}{N+1}\right)
       \sum_{j>N}p_j \notag\\
 &\le |u|\eta\left(\frac{|u|}{N+1}\right).
 \label{eq:uniform-product-tail}
\end{align}
For complex numbers of modulus at most one,
\[
 \left|\prod_jz_j-\prod_jw_j\right|\le\sum_j|z_j-w_j|
\]
for every finite product.  The same bound after passage to the limit, together
with \eqref{eq:uniform-product-tail}, shows that \eqref{eq:kernel} is a
uniform limit on $\K$ of continuous cylinder products.  It is therefore
continuous.

If $\mu_n\Rightarrow\mu$ in $\cP(\K)$, then
\[
 \int_\K k_u^X(p)\,\mu_n(\dd p)
 \longrightarrow
 \int_\K k_u^X(p)\,\mu(\dd p)
\]
for every $u$.  These are the characteristic functions of
$T_X(\mu_n)$ and $T_X(\mu)$, respectively, so L\'evy's continuity theorem
gives $T_X(\mu_n)\Rightarrow T_X(\mu)$.  Affinity follows directly from
mixing the law of $P$.
\end{proof}

Centering in Lemma~\ref{lem:kernel-continuity} cannot be dropped while retaining
the same state space and kernel.  Let $p^{(n)}$ have $n$ coordinates equal to
$1/n$ and all remaining coordinates zero.  Then $p^{(n)}\to0$
coordinatewise, but
\[
 \sum_jp_j^{(n)}X_j\longrightarrow \E X
\]
in probability.  If $\E X\ne0$, the kernel therefore need not converge to
$k_u^X(0)=1$.  This example also shows that neither the total atomic mass
$\sum_jp_j$ nor the dust mass $1-\sum_jp_j$ is continuous in the Kingman
topology.

The following standard separation argument gives an abstract criterion for a
determining mark.  The constructions below recover mixed power-sum moments
directly; the criterion gives the equivalent density property of their
conditional characteristic functions.

\begin{proposition}[Functional criterion for a determining mark]
\label{prop:functional-criterion}
Let $X\in L^1$ be centered.  Its distribution is determining for random
mass partitions if and only if
\begin{equation}\label{eq:functional-density}
 \overline{\operatorname{span}_{\R}
 \{1,\Re k_u^X,\Im k_u^X:u\in\R\}}^{\|\cdot\|_\infty}
 =C(\K;\R).
\end{equation}
\end{proposition}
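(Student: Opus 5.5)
The plan is to combine the Riesz representation theorem with Hahn--Banach duality on the compact metrizable space $\K$, using Lemma~\ref{lem:kernel-continuity} to know that every $k_u^X$ lies in $C(\K)$. Write $\mathcal A$ for the closed real span in \eqref{eq:functional-density}. Since $T_X$ is affine, injectivity on $\cP(\K)$ is equivalent to the statement that a finite signed measure of the form $\nu=\mu-\mu'$ with $\mu,\mu'\in\cP(\K)$ must vanish whenever $\int k_u^X\,\dd\nu=0$ for all $u$. By L\'evy uniqueness, $T_X(\mu)=T_X(\mu')$ holds exactly when $\int k_u^X\,\dd\mu=\int k_u^X\,\dd\mu'$ for every $u$. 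Since $\mu$ and $\mu'$ are real measures, this is in turn equivalent to $\nu$ annihilating $\Re k_u^X$ and $\Im k_u^X$ for every $u$.

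\emph{Sufficiency.} Assume \eqref{eq:functional-density} holds, and suppose $T_X(\mu)=T_X(\mu')$. Then $\nu=\mu-\mu'$ annihilates every generator of $\mathcal A$. The functional $f\mapsto\int f\,\dd\nu$ is continuous for the sup norm, so $\nu$ also annihilates the closure, which is all of $C(\K;\R)$. The Riesz uniqueness of representing measures then gives $\nu=0$, that is, $\mu=\mu'$.

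\emph{Necessity.} Suppose $\mathcal A\ne C(\K;\R)$. Hahn--Banach supplies a nonzero continuous functional on $C(\K;\R)$ that vanishes on $\mathcal A$. By Riesz this functional is integration against a nonzero finite signed Borel measure $\nu$. Because $1\in\mathcal A$, we have $\nu(\K)=0$. Take the Jordan decomposition $\nu=\nu^+-\nu^-$; then $\nu^+(\K)=\nu^-(\K)=c>0$. Put $\mu=\nu^+/c$ and $\mu'=\nu^-/c$. These are distinct probability measures on $\K$, and $\int k_u^X\,\dd(\mu-\mu')=0$ for every $u$. Hence $T_X(\mu)=T_X(\mu')$, so $X$ is not determining.

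There is no substantial obstacle. The only points needing care are, first, that the constant function $1$ is included among the generators, which is what forces $\nu(\K)=0$ and lets the Jordan parts be normalized to probability measures of equal mass. Second, the span is real, so one must pass from the complex kernel to its real and imaginary parts, and this passage uses that $\nu$ is a real measure.
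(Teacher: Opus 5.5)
Your proposal is correct and follows the paper's proof essentially step for step. Sufficiency uses uniform approximation together with uniqueness in the Riesz representation. Necessity uses Hahn--Banach and Riesz to obtain an annihilating signed measure; the constant function $1$ forces its total mass to be zero, and its normalized Jordan parts then give two distinct laws with equal outputs.
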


\begin{proof}
Let $\mathcal H_X$ denote the real span in \eqref{eq:functional-density}.
If $\mathcal H_X$ is dense and $T_X(\mu)=T_X(\mu')$, equality of output
characteristic functions gives equality against $\mathcal H_X$, hence
against $C(\K;\R)$ by uniform approximation; Riesz representation gives
$\mu=\mu'$.

Conversely, suppose that the closure of $\mathcal H_X$ is proper.  Real
Hahn--Banach and Riesz give a nonzero finite signed regular measure $\sigma$
on $\K$ which annihilates $\mathcal H_X$.  Since $1\in\mathcal H_X$,
$\sigma(\K)=0$.  If $\sigma=\sigma^+-\sigma^-$ is its Jordan decomposition,
then
\[
 c:=\sigma^+(\K)=\sigma^-(\K)>0.
\]
The distinct probability measures $\nu_+=\sigma^+/c$ and
$\nu_-=\sigma^-/c$ agree against the real and imaginary parts of $k_u^X$
for every $u$.  Their output characteristic functions, and hence their
output laws, coincide.
\end{proof}

We shall use the power-sum coordinates on the Kingman simplex
\cite{Petrov2009},
\begin{equation}\label{eq:Vs}
 V_s(p)=\sum_{j\ge1}p_j^s,\qquad s>1.
\end{equation}
They are continuous on $\K$.  Indeed,
\begin{equation}\label{eq:Vs-tail}
 \sup_{p\in\K}\sum_{j>N}p_j^s
 \le (N+1)^{1-s},
\end{equation}
because $p_{N+1}\le(N+1)^{-1}$ and $\sum_{j>N}p_j\le1$.  Thus $V_s$ is a
uniform limit of its finite coordinate sums.  This uniform tail bound will
also supply local uniform convergence of the Dirichlet series used in the
next section.

\section{A determining mark from stable components}
\label{sec:stable-mark}

We now prove Theorem~\ref{thm:intro-identifying}.  The construction combines
stable components with different indices.  The arithmetic choice of the
indices is used only to separate mixed moments.

Set
\begin{equation}\label{eq:stable-exponents}
 a=\frac32,
 \qquad
 \alpha_r=\frac32+\frac14e^{-r},\qquad r\ge1.
\end{equation}
Let $(Z_r)_{r\ge1}$ be independent standard symmetric
$\alpha_r$-stable random variables, normalized by
\[
 \E e^{iuZ_r}=\exp\{-|u|^{\alpha_r}\}.
\]
Since $a<\alpha_r$, the norm $\|Z_r\|_a$ is finite.  Define
\begin{equation}\label{eq:stable-coefficients}
 b_r=\frac{2^{-r}}{1+\|Z_r\|_a},
 \qquad c_r=b_r^{\alpha_r},
 \qquad X_*=\sum_{r\ge1}b_rZ_r.
\end{equation}

\begin{lemma}[The mark and its moment boundary]
\label{lem:stable-construction}
The series in \eqref{eq:stable-coefficients} converges in $L^a$ and
absolutely almost surely.  Its limit is nonconstant, symmetric, centered and
infinitely divisible, with
\begin{equation}\label{eq:stable-cf}
 \phi_{X_*}(u)=
 \exp\left\{-\sum_{r\ge1}c_r|u|^{\alpha_r}\right\}.
\end{equation}
Moreover,
\begin{equation}\label{eq:stable-moments}
 \E|X_*|^a<\infty,
 \qquad
 \E|X_*|^q=\infty\quad\text{for every }q>a.
\end{equation}
The family $(\alpha_r)_{r\ge1}$ is linearly independent over $\mathbb Q$.
\end{lemma}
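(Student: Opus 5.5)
The plan is to go through the claims in the order stated: convergence, characteristic function, structural properties, moments, and finally rational independence.

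\emph{Convergence.} Minkowski's inequality in $L^a$ gives
\[
 \sum_r\|b_rZ_r\|_a=\sum_r 2^{-r}\frac{\|Z_r\|_a}{1+\|Z_r\|_a}\le\sum_r2^{-r}<\infty .
\]
So the partial sums are Cauchy in $L^a$. Moreover $\E\sum_r b_r|Z_r|\le\sum_r b_r\|Z_r\|_a<\infty$, because $\|\cdot\|_1\le\|\cdot\|_a$ on a probability space. Hence the series converges absolutely almost surely, and its almost sure limit coincides with the $L^a$ limit.

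\emph{Characteristic function and structure.} Independence gives the characteristic function of each partial sum as $\exp\{-\sum_{r\le R}c_r|u|^{\alpha_r}\}$, since $\E e^{iub_rZ_r}=\exp\{-b_r^{\alpha_r}|u|^{\alpha_r}\}$. Almost sure convergence lets me pass to the limit and obtain \eqref{eq:stable-cf}; the exponent converges because $c_r\le 2^{-r\alpha_r}$. Each remaining property then follows from this formula:
\begin{itemize}
 \item symmetry, because $\phi_{X_*}$ is real and even;
 \item centering, because $X_*\in L^1$ and is symmetric;
 \item nonconstancy, because $|\phi_{X_*}(u)|<1$ for $u\ne0$;
 \item infinite divisibility, because for each $n$ the $n$-th root $\exp\{-\sum_r (c_r/n)|u|^{\alpha_r}\}$ is the characteristic function of $\sum_r n^{-1/\alpha_r}b_rZ_r$. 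Equivalently, an almost sure limit of independent sums of infinitely divisible variables is infinitely divisible, by closedness of the class under weak limits.
\end{itemize}

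\emph{Moments.} Finiteness of $\E|X_*|^a$ is immediate from $L^a$ convergence. For $q>a$ the plan is to isolate one component: choose $r$ with $\alpha_r<q$, which is possible since $\alpha_r\downarrow 3/2$. Write $X_*=b_rZ_r+Y$ with $Y$ independent of $Z_r$. Suppose $\E|X_*|^q<\infty$. For a fixed $y$ in a set of positive probability, Fubini gives $\E|b_rZ_r+y|^q<\infty$, hence $\E|Z_r|^q<\infty$. This contradicts the classical fact that a non-Gaussian stable law with index $\alpha_r<2$ has infinite moments of order $\ge\alpha_r$, via the tail asymptotics $\Pp(|Z_r|>x)\sim Cx^{-\alpha_r}$. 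This step is routine once the standard tail fact is cited.

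\emph{Rational independence.} This is the only genuinely arithmetic point. Suppose $\sum_{r\in F}m_r\alpha_r=0$ with $F$ finite and $m_r\in\mathbb Z$. Then
\[
 \tfrac32\sum_{r\in F} m_r+\tfrac14\sum_{r\in F} m_r e^{-r}=0,
\]
which says $e^{-1}$ is a root of the integer polynomial $6\sum_r m_r+\sum_r m_rx^r$. By transcendence of $e$ (Hermite) this polynomial must vanish identically, so every $m_r=0$; the constant term $6\sum_r m_r$ then vanishes automatically. I expect the main subtlety to be phrasing this step cleanly: the constant term must be absorbed into the polynomial, and because $r\ge1$ the monomials $x^r$ are distinct from the constant, so vanishing of the polynomial forces each $m_r=0$.
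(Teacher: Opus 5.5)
Your proposal is correct and follows the paper's proof in all essentials: Minkowski plus Tonelli for the convergence, passage to the limit in the characteristic functions of the partial sums, isolation of one stable component with $\alpha_r<q$ for the moment divergence, and transcendence of $e^{-1}$ for rational independence. The differences are minor and both valid: you get infinite divisibility from the explicit $n$th roots $\sum_r n^{-1/\alpha_r}b_rZ_r$ instead of closedness under weak limits, and the divergence from Fubini at a fixed $y$ instead of the event $\{|b_rZ_r|>2M,\ |Y_r|\le M\}$.
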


\begin{proof}
First suppose that a finite rational relation
$\sum_{r=1}^Rq_r\alpha_r=0$ holds.  With $z=e^{-1}$,
\[
 \frac32\sum_{r=1}^Rq_r+\frac14\sum_{r=1}^Rq_rz^r=0.
\]
This is a polynomial relation over $\mathbb Q$ for the transcendental number
$z$.  Its positive-degree coefficients, and hence all $q_r$, vanish.  This
proves rational independence.

By the choice of $b_r$,
\[
 \sum_{r\ge1}\|b_rZ_r\|_a\le\sum_{r\ge1}2^{-r}<\infty.
\]
Minkowski's inequality and completeness give convergence in $L^a$.  Also
$\sum_r b_r\E|Z_r|<\infty$, so Tonelli gives absolute convergence almost
surely.  Symmetry is inherited from the partial sums, and $L^a$ convergence,
with $a>1$, passes their zero means to the limit.

Since $0<b_r<1$ and $\alpha_r>1$, we have $c_r\le b_r$ and
$\sum_rc_r<\infty$.  Passing to the limit in the characteristic functions of
the finite sums gives \eqref{eq:stable-cf}; its exponent is continuous at the
origin because
\begin{equation}\label{eq:stable-exp-uniform}
 \sum_rc_r|u|^{\alpha_r}
 \le \max\{|u|^a,|u|^2\}\sum_rc_r.
\end{equation}
The finite stable convolutions are infinitely divisible and their laws
converge weakly.  Equivalently, the exponent in \eqref{eq:stable-cf} is a
locally uniform sum of continuous negative-definite functions.  Either
description shows that $X_*$ is infinitely divisible.  It is nonconstant
because the exponent is positive away from the origin.

The finiteness of the $a$th moment follows from the $L^a$ construction.  If
$q>a$,
choose $r$ such that $\alpha_r<q$ and write
$X_*=b_rZ_r+Y_r$, where the two summands are independent.  Choose $M<\infty$
such that $\Pp(|Y_r|\le M)>0$.  On
$\{|b_rZ_r|>2M,\ |Y_r|\le M\}$,
$|X_*|\ge |b_rZ_r|/2$.  Independence and the infinite $q$th moment of
$Z_r$ yield
\[
 \E|X_*|^q
 \ge 2^{-q}\Pp(|Y_r|\le M)
       \E\big[|b_rZ_r|^q\1_{\{|b_rZ_r|>2M\}}\big]
 =\infty.
\]
Indeed, a nondegenerate symmetric $\alpha_r$-stable law has finite absolute
$q$th moment exactly when $q<\alpha_r$; for $q\ge\alpha_r$, the
L\'evy-measure criterion in Sato~\cite[Theorem~25.3]{Sato1999} gives divergence.
\end{proof}

For $p\in\K$, nonnegative Tonelli and \eqref{eq:stable-exp-uniform} give
the conditional characteristic function
\begin{equation}\label{eq:stable-kernel}
 k_u^{X_*}(p)
 =\exp\left\{-\sum_{r\ge1}c_r|u|^{\alpha_r}
                         V_{\alpha_r}(p)\right\}.
\end{equation}
The exponent converges uniformly over $p\in\K$, since
$0\le V_{\alpha_r}(p)\le1$.

We next isolate the uniqueness statement that permits accumulating exponents.

\begin{lemma}[Finite signed Laplace uniqueness]
\label{lem:signed-laplace}
Let $\eta$ be a finite signed Borel measure on $[0,\infty)$.  If
\[
 \int_{[0,\infty)}e^{-s\lambda}\,\eta(\dd\lambda)=0
 \qquad\text{for every }s>0,
\]
then $\eta=0$.
\end{lemma}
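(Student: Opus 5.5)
The plan is to reduce to uniqueness for a finite measure on a compact interval, and then to density of polynomials. The substitution $x=e^{-\lambda}$ maps $[0,\infty)$ onto $(0,1]$, with $\lambda=\infty$ corresponding to $x=0$. Let $\tilde\eta$ be the pushforward of $\eta$ under $\lambda\mapsto e^{-\lambda}$, viewed as a finite signed Borel measure on $[0,1]$ that puts no mass at $0$. The hypothesis then reads
\[
 \int_{[0,1]}x^s\,\tilde\eta(\dd x)=0\qquad(s>0).
\]
In particular, every moment of order $n\ge1$ of $\tilde\eta$ vanishes.

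First I need the zeroth moment, namely $\tilde\eta([0,1])=\eta([0,\infty))$. Let $s\downarrow0$. Then $e^{-s\lambda}\to1$ pointwise on $[0,\infty)$, and $|e^{-s\lambda}|\le1$. Dominated convergence against the finite measure $|\eta|$ gives $\eta([0,\infty))=\lim_{s\downarrow0}\int e^{-s\lambda}\,\eta(\dd\lambda)=0$. So $\tilde\eta$ annihilates all polynomials on $[0,1]$.

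Next, by the Weierstrass approximation theorem, polynomials are uniformly dense in $C([0,1])$. Since $\tilde\eta$ is finite, $\int f\,\dd\tilde\eta=0$ for every $f\in C([0,1])$. The Riesz representation theorem, in the form of uniqueness for finite signed regular Borel measures on the compact metric space $[0,1]$, then gives $\tilde\eta=0$. Regularity is automatic for finite Borel measures on a metric space.

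Finally, $\lambda\mapsto e^{-\lambda}$ is a Borel isomorphism of $[0,\infty)$ onto $(0,1]$. Hence $\eta(B)=\tilde\eta(e^{-B})=0$ for every Borel set $B$, and $\eta=0$.

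The only delicate point is the zeroth moment. The hypothesis is stated only for $s>0$, so the constant function is not directly available, and the limit $s\downarrow0$ must be justified. This works by dominated convergence, but only because the domain is $[0,\infty)$ and $\eta$ is finite.

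An alternative avoids the limit altogether. One could use only the functions $x^{n}$ for $n\ge1$, which are the $s=n$ cases. These span a subalgebra of $C([0,1])$ that separates points but vanishes at $0$. By Stone--Weierstrass it is dense in $\{f:f(0)=0\}$. Since $\tilde\eta(\{0\})=0$, this also suffices. I would present the first route and mention this one only if it helps.
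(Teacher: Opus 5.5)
Your proof is correct and is essentially the paper's argument: substitute $x=e^{-\lambda}$, show that every polynomial moment on $[0,1]$ vanishes, apply Weierstrass density and Riesz uniqueness, and pull back through the Borel isomorphism onto $(0,1]$. The only difference is how you obtain the zeroth moment: you get $\eta([0,\infty))=0$ by dominated convergence as $s\downarrow0$, whereas the paper first weights $\eta$ by $e^{-\lambda}$, so that the $n$th moment of the weighted pushforward is the transform at $s=n+1$ for every $n\ge0$, and then removes the weight by multiplying by $e^{\lambda}$ on $[0,L]$ and letting $L\uparrow\infty$.
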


\begin{proof}
Weight $\eta$ by $e^{-\lambda}$ and push the resulting finite signed
measure forward under $x=e^{-\lambda}$ to a measure $\rho$ on $[0,1]$.
For every integer $n\ge0$,
\[
 \int_{[0,1]}x^n\,\rho(\dd x)
 =\int_{[0,\infty)}e^{-(n+1)\lambda}\,\eta(\dd\lambda)=0.
\]
Polynomials are uniformly dense in $C[0,1]$, so $\rho=0$.  The map
$\lambda\mapsto e^{-\lambda}$ is a Borel isomorphism from $[0,\infty)$
onto $(0,1]$.  Hence
$e^{-\lambda}\eta(\dd\lambda)=0$; multiplying by the bounded function
$e^\lambda$ on $[0,L]$ and then letting $L\uparrow\infty$ gives
$\eta=0$.
\end{proof}

The next lemma extracts the law of the entire fractional power-sum profile
from one output distribution.  Write
\[
 v(p)=\big(V_{\alpha_r}(p)\big)_{r\ge1}\in[0,1]^{\N}.
\]

\begin{lemma}[Extraction of the profile law]
\label{lem:profile-extraction}
If $\mu,\mu'\in\cP(\K)$ and
$T_{X_*}(\mu)=T_{X_*}(\mu')$, then
$v_\#\mu=v_\#\mu'$.
\end{lemma}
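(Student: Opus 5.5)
The plan is to turn equality of output characteristic functions into equality of all mixed moments of the profile vector $v(P)$. Since $v$ takes values in the compact cube $[0,1]^{\N}$, equality of all mixed moments gives equality of the pushforward laws. This follows from Stone--Weierstrass on $[0,1]^{\N}$: polynomials in finitely many coordinates are dense in $C([0,1]^{\N})$, and finite-dimensional moment determinacy holds on compacts.

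Fix $\mu,\mu'$ with $T_{X_*}(\mu)=T_{X_*}(\mu')$ and put $\sigma=\mu-\mu'$, a finite signed measure on $\K$. By \eqref{eq:stable-kernel},
\[
 \int_\K\exp\Big\{-\sum_{r\ge1}c_r|u|^{\alpha_r}V_{\alpha_r}(p)\Big\}\,\sigma(\dd p)=0\qquad(u>0).
\]

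\textbf{Step 1: reduce to a Laplace transform.} Set $w_r=c_rV_{\alpha_r}(p)\in[0,c_r]$ and expand the exponential:
\[
 e^{-\sum_r w_r|u|^{\alpha_r}}=\sum_{m\ge0}\frac{(-1)^m}{m!}\Big(\sum_r w_r|u|^{\alpha_r}\Big)^m.
\]
Expanding each power multinomially gives a sum over finitely supported multi-indices $n=(n_r)$ of terms
\[
 \frac{(-1)^{|n|}}{n!}\prod_r w_r^{n_r}\,|u|^{\langle n,\alpha\rangle}.
\]
With $|u|=e^{-s}$, the exponents become $e^{-s\lambda_n}$ with $\lambda_n=\langle n,\alpha\rangle\ge0$. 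The expansion converges absolutely, uniformly in $p$: the sum of absolute values is at most $\exp\{\sum_r c_r|u|^{\alpha_r}\}<\infty$. So Fubini is allowed, and we obtain
\[
 \sum_n \frac{(-1)^{|n|}}{n!}\,\beta_n\,e^{-s\lambda_n}=0\quad(s\in\R),\qquad \beta_n=\int\prod_r (c_rV_{\alpha_r})^{n_r}\,\dd\sigma .
\]
Since $\sum_n|\beta_n|/n!\le \|\sigma\|_{\TV}e^{\sum c_r}<\infty$, the coefficients define a finite signed atomic measure
\[
 \eta=\sum_n\frac{(-1)^{|n|}\beta_n}{n!}\,\delta_{\lambda_n}\quad\text{on }[0,\infty).
\]
Restricting to $s>0$, Lemma~\ref{lem:signed-laplace} gives $\eta=0$.

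\textbf{Step 2: separate the atoms.} This is the main point, and where rational independence from Lemma~\ref{lem:stable-construction} enters. If $\langle n,\alpha\rangle=\langle n',\alpha\rangle$ for finitely supported integer vectors $n,n'$, then $\sum_r(n_r-n'_r)\alpha_r=0$ is a finite rational relation, so $n=n'$. Hence distinct multi-indices give distinct atoms $\lambda_n$. The atoms may accumulate (e.g. near multiples of $3/2$), but $\eta$ is an honest finite measure, so $\eta(\{\lambda_n\})=0$ still reads off each coefficient individually. We conclude $\beta_n=0$ for every $n$. Dividing by $\prod_r c_r^{n_r}>0$ gives
\[
 \int\prod_r V_{\alpha_r}^{n_r}\,\dd\mu=\int\prod_r V_{\alpha_r}^{n_r}\,\dd\mu'
\]
for every finitely supported $n$. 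These are exactly the mixed moments of $v_\#\mu$ and $v_\#\mu'$.

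\textbf{Step 3: conclude.} Monomials in finitely many coordinates span a point-separating unital subalgebra of $C([0,1]^{\N})$, where the cube carries the product topology and is compact. By Stone--Weierstrass they are dense, and Riesz representation yields $v_\#\mu=v_\#\mu'$. Measurability of $v$ is clear because each $V_{\alpha_r}$ is continuous on $\K$.

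The only delicate step is the bookkeeping in Step 1: the absolute-convergence bound must justify both Fubini and the regrouping of terms sharing a common exponent. Step 2 shows no two multi-indices actually share one, so no regrouping is needed.
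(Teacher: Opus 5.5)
Your proposal is correct and follows the paper's proof essentially step for step: the multiindex expansion with the bound $\|\sigma\|_{\TV}e^{\sum_r c_r}$, the substitution $|u|=e^{-s}$ combined with Lemma~\ref{lem:signed-laplace}, rational independence to make $n\mapsto\lambda_n$ injective, and Stone--Weierstrass on $[0,1]^{\N}$. The only cosmetic difference is that you take $\sigma=\mu-\mu'$ on $\K$ rather than the difference of the pushforward laws on the cube, and the two choices are equivalent.
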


\begin{proof}
Let $\rho=v_\#\mu$, $\rho'=v_\#\mu'$ and $\sigma=\rho-\rho'$.  Equality
of the output characteristic functions and \eqref{eq:stable-kernel} imply
\begin{equation}\label{eq:profile-transform-equality}
 \int\exp\left\{-\sum_{r\ge1}c_rt^{\alpha_r}x_r\right\}
       \sigma(\dd x)=0,
 \qquad t>0.
\end{equation}
For a finite-support multiindex
$m=(m_r)\in\mathbb N_0^{(\mathbb N)}$, put
\[
 |m|=\sum_rm_r,
 \quad m!=\prod_rm_r!,
 \quad c^m=\prod_rc_r^{m_r},
 \quad \lambda_m=m\mathbin{\cdot}\alpha=\sum_rm_r\alpha_r,
\]
and, for $m\ne0$, set
\begin{equation}\label{eq:stable-am}
 A_m=\frac{(-1)^{|m|}c^m}{m!}
       \int\prod_rx_r^{m_r}\,\sigma(\dd x).
\end{equation}
The coefficient corresponding to the zero multiindex is zero because
$\sigma$ has total mass zero.  The multiindex expansion is absolutely
summable:
\begin{equation}\label{eq:stable-TV-bound}
 \sum_m|A_m|
 \le \|\sigma\|_{\TV}\exp\left\{\sum_rc_r\right\}<\infty.
\end{equation}
Consequently, substituting $t=e^{-s}$ in
\eqref{eq:profile-transform-equality} gives
\[
 0=\sum_{m\ne0}A_me^{-s\lambda_m}
   =\int_{[0,\infty)}e^{-s\lambda}\,\eta(\dd\lambda),
 \qquad
 \eta=\sum_{m\ne0}A_m\delta_{\lambda_m}.
\]
The bound \eqref{eq:stable-TV-bound} makes $\eta$ a finite signed measure,
even though its atoms have finite accumulation points.  By
Lemma~\ref{lem:signed-laplace}, $\eta=0$.

Rational independence of $(\alpha_r)$ makes
$m\mapsto\lambda_m$ injective.  Thus
$\eta(\{\lambda_m\})=A_m=0$ for every nonzero finite-support multiindex.
All mixed cylinder moments of $\rho$ and $\rho'$ agree.  Cylinder
polynomials form a unital real algebra that separates points of the compact
cube $[0,1]^{\N}$, so Stone--Weierstrass gives $\rho=\rho'$.
\end{proof}

It remains to prove that the profile map is injective.

\begin{lemma}[Recovery from accumulating power sums]
\label{lem:profile-injective}
Let $(s_r)_{r\ge1}$ be distinct real numbers greater than one that converge
to $s_\infty>1$.  Then
\[
 v_s:\K\longrightarrow[0,1]^{\N},
 \qquad v_s(p)=(V_{s_r}(p))_{r\ge1},
\]
is continuous and injective, and hence is a homeomorphism onto its image.
\end{lemma}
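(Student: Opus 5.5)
Continuity is immediate from the preliminaries. Each $V_{s_r}$ is continuous on $\K$ by the uniform tail bound \eqref{eq:Vs-tail}, and the product topology on $[0,1]^{\N}$ makes $v_s$ continuous. Once injectivity is known, the homeomorphism onto the image follows because $\K$ is compact and $[0,1]^{\N}$ is Hausdorff. So the substance is injectivity, which I would prove by analytic continuation followed by a peeling argument.

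\emph{Step 1 (holomorphic extension).} For $p\in\K$ define $F_p(z)=\sum_{j\ge1}p_j^z$ on the half-plane $\{\Re z>1\}$, with the convention that zero coordinates contribute nothing. For $\Re z\ge\sigma>1$ we have $|p_j^z|=p_j^{\Re z}\le p_j^{\sigma}$. The bound \eqref{eq:Vs-tail} then gives $\sup_{\Re z\ge\sigma}\sum_{j>N}|p_j^z|\le(N+1)^{1-\sigma}$. Hence the series converges locally uniformly and $F_p$ is holomorphic on $\{\Re z>1\}$.

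\emph{Step 2 (identity theorem).} Suppose $v_s(p)=v_s(p')$. Then $F_p-F_{p'}$ vanishes at every $s_r$. These points are distinct and accumulate at $s_\infty$, which lies inside the connected domain $\{\Re z>1\}$. The identity theorem gives $F_p\equiv F_{p'}$, and in particular $V_s(p)=V_s(p')$ for all real $s>1$.

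\emph{Step 3 (peeling off masses).} From the function $s\mapsto V_s(p)$ on $(1,\infty)$ I recover the positive masses and their multiplicities.
\begin{itemize}
\item Group the positive values of $p$ as distinct levels $x_1>x_2>\cdots$ with multiplicities $n_1,n_2,\ldots$; each multiplicity is finite because $\sum_j p_j\le1$.
\item If $p\ne0$, then $x_1=\lim_{s\to\infty}V_s(p)^{1/s}$. Moreover $V_s(p)/x_1^s\to n_1$ as $s\to\infty$, since the remainder is bounded by $(x_2/x_1)^{s-2}\,V_2(p)/x_1^2\to0$.
\item Subtract $n_1x_1^s$ and repeat on the remaining sum. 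If at some stage the remainder vanishes identically, there are no further positive masses. Likewise $p=0$ exactly when $V_s(p)\equiv0$.
\end{itemize}
Induction recovers every level and multiplicity, so the ranked sequence $p$ is determined. Thus $p=p'$.

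The main obstacle is the legitimacy of Step 2: we need holomorphy on a domain that contains the accumulation point $s_\infty$. This is why the hypothesis $s_\infty>1$ matters, and the uniform tail estimate on vertical strips from Step 1 handles it. The peeling in Step 3 is routine, but it needs the uniform domination bound for the remainder so that the limits are valid even with infinitely many masses.
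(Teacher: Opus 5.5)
Your proposal is correct and follows the paper's proof essentially step for step: continuity from the uniform tail bound \eqref{eq:Vs-tail}, holomorphy of the Dirichlet series $F_p$ on $\{\Re z>1\}$, the identity theorem at the interior accumulation point $s_\infty$, and recursive recovery of the largest remaining mass and its finite multiplicity from $s\to\infty$ limits. The only difference is cosmetic. Your remainder bound $(x_2/x_1)^{s-2}V_2(p)/x_1^2$ replaces the paper's $\theta^{s-1}\sum_{j>m_1}p_j/p_1$, and both are valid.
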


\begin{proof}
Continuity of every coordinate follows from \eqref{eq:Vs-tail}.  For
$p\in\K$ define
\begin{equation}\label{eq:partition-dirichlet}
 F_p(z)=\sum_{j:p_j>0}p_j^z,
 \qquad \Re z>1.
\end{equation}
On every compact subset of this half-plane, the series converges locally
uniformly by \eqref{eq:Vs-tail}, with $s$ replaced by the infimum of the real
parts.  Hence $F_p$ is holomorphic.  If $v_s(p)=v_s(p')$, then
$F_p(s_r)=F_{p'}(s_r)$ for every $r$.  These points accumulate at
$s_\infty$, an interior point of the domain, so the identity theorem gives
$F_p=F_{p'}$ on $\{\Re z>1\}$.

We recall how the positive masses are recovered from this Dirichlet series.
The zero partition is characterized by $F_p\equiv0$.  Otherwise,
\begin{equation}\label{eq:largest-mass}
 p_1=\lim_{s\to\infty}F_p(s)^{1/s};
\end{equation}
indeed $p_1^s\le F_p(s)\le p_1^{s-1}\sum_jp_j\le p_1^{s-1}$.
Let $m_1$ be the multiplicity of $p_1$.  It is finite.  If further positive
masses remain, put $\theta=p_{m_1+1}/p_1<1$.  Then
\begin{align*}
 0\le \frac{F_p(s)}{p_1^s}-m_1
 &=\sum_{j>m_1}\left(\frac{p_j}{p_1}\right)^s\\
 &\le \theta^{s-1}\sum_{j>m_1}\frac{p_j}{p_1}
 \longrightarrow0.
\end{align*}
Thus $m_1$ is recovered.  Subtract $m_1p_1^z$ from $F_p(z)$ and repeat.
The recursion either terminates or recovers, at each finite stage, the next
positive mass and its finite multiplicity.  It therefore recovers every
positive coordinate of $p$, including infinite strictly decreasing
sequences and repeated masses.  Hence $p=p'$.

Finally, a continuous injection from compact $\K$ into the Hausdorff cube is
a homeomorphism onto its image.
\end{proof}

\begin{proof}[Proof of Theorem~\ref{thm:intro-identifying}]
The properties and moments of $X_*$ are given by
Lemma~\ref{lem:stable-construction}.  If two input laws have the same output law,
Lemma~\ref{lem:profile-extraction} gives equality of their profile pushforwards.
By Lemma~\ref{lem:profile-injective}, applied with $s_r=\alpha_r$, $v$ has a
Borel inverse on its compact image.  Consequently,
\[
 \mu=(v^{-1})_\#(v_\#\mu)
     =(v^{-1})_\#(v_\#\mu')=\mu'.
\]
Thus $T_{X_*}$ is injective.  It is
affine and weakly continuous by Lemma~\ref{lem:kernel-continuity}.  Since
$\cP(\K)$ is compact and $\cP(\R)$ is Hausdorff, it is a topological
embedding onto its image.
\end{proof}

For $p\in\K$, write
\begin{equation}\label{eq:deterministic-output-law}
 Q_p^{X_*}=\Law\left(\sum_{j\ge1}p_jX_j^*\right).
\end{equation}

\begin{corollary}[Affine representation and random measures]
\label{cor:affine-random-measure}
The compact convex set $T_{X_*}(\cP(\K))$ has compact extreme boundary
$\{Q_p^{X_*}:p\in\K\}$, and every law in this image has a unique mixture
representation in the laws $Q_p^{X_*}$ associated with deterministic mass
partitions $p\in\K$.  Thus this image is
a Bauer simplex.

Let $H$ be a known diffuse probability measure on a standard Borel space
$E$, and let $f:E\to\R$ be measurable with
$f_\#H=\Law(X_*)$.  If, conditionally on $P$, the variables $Z_j$ are iid
with law $H$ and
\[
 M=\left(1-\sum_{j\ge1}P_j\right)H
   +\sum_{j\ge1}P_j\delta_{Z_j},
\]
then the law of $M(f):=\int_E f(z)\,M(\dd z)$ determines $\Law(P)$ and hence
$\Law(M)$.
\end{corollary}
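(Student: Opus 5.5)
The plan has two parts, matching the two claims of Corollary~\ref{cor:affine-random-measure}.

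\emph{The Bauer simplex.} By Theorem~\ref{thm:intro-identifying}, $T:=T_{X_*}$ is an affine homeomorphism of $\cP(\K)$ onto the compact convex set $C:=T(\cP(\K))\subset\cP(\R)$. Here $\cP(\K)$ is regarded inside the locally convex space of finite signed measures with the weak topology. An affine bijection maps extreme points to extreme points in both directions, since the inverse is also affine. The extreme points of $\cP(\K)$ are exactly the Dirac masses $\delta_p$, so the extreme points of $C$ are the laws $T(\delta_p)=Q_p^{X_*}$.

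The map $p\mapsto\delta_p$ is a homeomorphism of $\K$ onto a closed subset of $\cP(\K)$. Composing with $T$ shows that $\{Q_p^{X_*}:p\in\K\}$ is compact, and that $p\mapsto Q_p^{X_*}$ is a homeomorphism onto it.

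For existence of the mixture representation, take $\nu=T(\mu)\in C$. Mixing the law of $P$ gives $\nu=\int_\K Q_p^{X_*}\,\mu(\dd p)$, so $\mu$ pushed to the extreme boundary represents $\nu$. For uniqueness, let $m$ be a probability measure on the extreme boundary with barycenter $\nu$. Pull $m$ back to a law $\mu'$ on $\K$ through the homeomorphism $p\mapsto Q_p^{X_*}$. Then $\nu=\int Q_p^{X_*}\,\mu'(\dd p)=T(\mu')$. Here the barycenter is identified with the mixture by testing against the continuous affine functionals $\nu\mapsto\int e^{iux}\nu(\dd x)$, which determine a law on $\R$. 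Injectivity of $T$ then gives $\mu'=\mu$. Thus $C$ is a compact convex set with closed extreme boundary on which every point has a unique representing measure, that is, a Bauer simplex.

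\emph{The random-measure statement.} By definition,
\[
 M(f)=\Bigl(1-\sum_jP_j\Bigr)\int f\,\dd H+\sum_jP_jf(Z_j).
\]
Since $f_\#H=\Law(X_*)$ and $X_*$ is centered and integrable (Lemma~\ref{lem:stable-construction}), $f\in L^1(H)$ and $\int f\,\dd H=\E X_*=0$, so the dust term vanishes. Conditionally on $P$, the variables $X_j^*:=f(Z_j)$ are iid with law $\Law(X_*)$ and independent of $P$. Hence $M(f)=\sum_jP_jX_j^*$ almost surely, and $\Law(M(f))=T_{X_*}(\Law(P))$. Injectivity in Theorem~\ref{thm:intro-identifying} recovers $\Law(P)$.

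Finally, $\Law(M)$ is the image of $\Law(P)\otimes H^{\otimes\N}$ under a measurable map into the space of probability measures on $E$. Since $H$ is known, $\Law(M)$ is a function of $\Law(P)$.

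The main obstacle is mild: checking that the abstract (Choquet) notion of barycenter on $C$ agrees with the mixture of laws. I would handle this through the characteristic-function functionals as above, all of which are continuous and affine on $C$. Measurability of $p\mapsto Q_p^{X_*}$ is automatic from continuity (Lemma~\ref{lem:kernel-continuity}), and the diffuseness of $H$ is not needed for this argument.
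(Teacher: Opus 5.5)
Your proposal is correct and follows the paper's proof. You transport the Bauer-simplex structure of $\cP(\K)$, whose extreme points are the $\delta_p$, through the affine homeomorphism $T_{X_*}$, and you use centering, $\int f\,\dd H=\E X_*=0$, to remove the dust term so that $\Law(M(f))=T_{X_*}(\Law(P))$. Beyond the paper, you spell out details it leaves implicit: $f\in L^1(H)$, that affine bijections preserve extreme points, and that the Choquet barycenter agrees with the mixture, checked via characteristic-function functionals.
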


\begin{proof}
The space $\cP(\K)$ is a Bauer simplex with extreme boundary
$\{\delta_p:p\in\K\}$.  Theorem~\ref{thm:intro-identifying} gives an affine
homeomorphism from this simplex onto its image and sends $\delta_p$ to
$Q_p^{X_*}$.  In particular,
\[
 T_{X_*}(\mu)=\int_\K Q_p^{X_*}\,\mu(\dd p),
\]
and the representing law $\mu$ is unique.  This proves the first assertion.
For the second, centering gives
$\int f\,\dd H=\E X_*=0$, and therefore
\[
 M(f)=\sum_{j\ge1}P_jf(Z_j).
\]
Conditionally on $P$, the variables $f(Z_j)$ are iid with law
$\Law(X_*)$.  Thus $\Law(M(f))=T_{X_*}(\Law(P))$, which determines
$\Law(P)$.  Since $H$ is known, this law also determines the conditional
mixture construction, and hence $\Law(M)$.
\end{proof}

\section{Fixed-time recovery of a \texorpdfstring{$\Xi$}{Xi}-coalescent}
\label{sec:xi-fixed}

We use Schweinsberg's convention
\cite[Theorem~2 and formula~(11)]{Schweinsberg2000} throughout:
\(\Xi=a\delta_0+\Xi_0\), where \(a=\Xi(\{0\})\) and
\(\Xi_0(\{0\})=0\).  Thus the atom at zero is the Kingman component.  A
specified collision of \(r\) groups of sizes \(k_1,\ldots,k_r\geq2\),
leaving \(s\) singleton blocks, with \(b=k_1+\cdots+k_r+s\), has rate
\begin{equation}
\begin{split}
 \lambda^\Xi_{b;k_1,\ldots,k_r;s}
 &=a\mathbf 1_{\{r=1,\,k_1=2\}}\\
 &\quad+\int_{\K\setminus\{0\}}
 \sum_{\ell=0}^{s}\binom{s}{\ell}(1-|x|)^{s-\ell}\\
 &\hspace{2.8cm}\times
 \sum_{\substack{i_1,\ldots,i_{r+\ell}\\\mathrm{all\ distinct}}}
 x_{i_1}^{k_1}\cdots x_{i_r}^{k_r}
 x_{i_{r+1}}\cdots x_{i_{r+\ell}}\,
 \frac{\Xi_0(dx)}{V_2(x)},
 \label{eq:xi-schweinsberg-rates}
\end{split}
\end{equation}
where \(|x|=\sum_i x_i\).  This convention is important for both the
normalization and weak limits at the origin.

Related inversions use different observations.
For normalized collision measures, Schweinsberg~\cite[Proposition~6]{Schweinsberg2000}
reconstructs the measure from the induced merger partition of the other
blocks when two specified labels first coalesce, conditional on the number
of blocks immediately before that event.  For $\Lambda$-coalescents, M{\"o}hle~\cite[Lemma~6.3]{Mohle2008}
proves determination from the marginal laws of the times to the most recent
common ancestor.  We use the complete partition law at one deterministic
time.

Let \(P^\Xi(t)\) be the ranked positive asymptotic block frequencies of the
standard \(\Xi\)-coalescent started from singletons; its missing mass is
dust.  Knowing its law means knowing the probability distribution of this
random mass partition, as would be available from replications, rather than
one realization.  By Kingman's paintbox theorem, this law is equivalent to
the EPPF of the restriction at time $t$ for every sample size $n$.

For $n\geq1$, let $\mathcal S_n$ be the set of partitions of $[n]$, let
$0_n$ be its singleton partition, and put
$D_n=\mathcal S_n\setminus\{0_n\}$.  Write $q^{(n)}_{\rho,\sigma}$ for the
generator entry from $\rho$ to $\sigma$, and set
\[
 R_n=-q^{(n)}_{0_n,0_n},\qquad
 r_n=(q^{(n)}_{0_n,\pi})_{\pi\in D_n},
 \qquad R_1=0.
\]
In every $D_n$ we use a linear extension of refinement, with finer
partitions first.

\begin{lemma}[Projective determination of the post-jump generator]
\label{lem:xi-projective-subgenerator}
Suppose that $r_b$ and $R_b$ are known for every $b<n$.  Then the
subgenerator $B_n=(q^{(n)}_{\rho,\sigma})_{\rho,\sigma\in D_n}$ is known.
More explicitly, let $\rho\in D_n$ have blocks
$C_1,\ldots,C_b$, listed by increasing least element.  If $\sigma$ is a
strict coarsening of $\rho$, let $\theta_{\rho,\sigma}\in\mathcal S_b$ be
the labeled partition in which $i$ and $j$ are in the same block exactly
when $C_i$ and $C_j$ lie in the same block of $\sigma$.  Then
\begin{equation}
 q^{(n)}_{\rho,\sigma}=r_b(\theta_{\rho,\sigma}),
 \qquad q^{(n)}_{\rho,\rho}=-R_b,
 \label{eq:xi-projective-generator}
\end{equation}
and $q^{(n)}_{\rho,\sigma}=0$ when $\sigma$ is not a coarsening of $\rho$.
Consequently $B_n$ is triangular in the stated order.
\end{lemma}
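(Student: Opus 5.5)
The plan is to read the claim off the consistency (projectivity) of the $\Xi$-coalescent under relabelling of blocks, a property of the Schweinsberg rates \eqref{eq:xi-schweinsberg-rates}. The rate of a given collision depends only on the current number of blocks $b$ and on which groups of blocks merge. Its value does not depend on the sizes or labels of the blocks. So the argument first fixes $\rho\in D_n$ with blocks $C_1,\ldots,C_b$, listed by least element. Since $\rho\ne0_n$, some block has at least two elements, so $b<n$.

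First, the transitions out of $\rho$. A coalescent event started from $\rho$ merges whole blocks, so it produces only coarsenings of $\rho$. This gives $q^{(n)}_{\rho,\sigma}=0$ whenever $\sigma$ is neither $\rho$ nor a coarsening of $\rho$. For a strict coarsening $\sigma$, the jump $\rho\to\sigma$ merges the groups of blocks prescribed by $\theta_{\rho,\sigma}$. Its rate is $\lambda^\Xi_{b;k_1,\ldots,k_r;s}$, where $k_1,\ldots,k_r$ are the sizes of the nonsingleton blocks of $\theta_{\rho,\sigma}$ and $s$ is its number of singletons. The same formula is the rate at level $b$ of the jump $0_b\to\theta_{\rho,\sigma}$. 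Formally, this uses the map $\mathcal S_n\supseteq\{\text{coarsenings of }\rho\}\to\mathcal S_b$, $\sigma\mapsto\theta_{\rho,\sigma}$. The map is a bijection, sends $\rho$ to $0_b$, and intertwines the two Markov chains. Schweinsberg's construction makes this intertwining explicit: merge the blocks via the paintbox of the Poisson point process, which is applied to blocks as units. Hence $q^{(n)}_{\rho,\sigma}=r_b(\theta_{\rho,\sigma})$.

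Second, the diagonal entry. Summing over all strict coarsenings and using the bijection gives $q^{(n)}_{\rho,\rho}=-\sum_{\pi\in D_b}r_b(\pi)=-R_b$. Alternatively, this follows directly because the total jump rate depends only on $b$.

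Since $b<n$, both $r_b$ and $R_b$ are known by hypothesis, so every entry of $B_n$ is known. Triangularity follows from the ordering. Every nonzero off-diagonal entry goes from $\rho$ to a strict coarsening $\sigma$, and $\sigma$ comes after $\rho$ in a linear extension that lists finer partitions first. So $B_n$ is upper triangular, with diagonal $-R_{b(\rho)}$.

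The only step needing care is the precise intertwining claim, namely that the rate of merging prescribed groups of blocks depends on the configuration only through $b$. I would justify it from Schweinsberg's Poisson construction (or his Theorem~2 consistency statement) rather than recomputing \eqref{eq:xi-schweinsberg-rates}. Everything else is bookkeeping.
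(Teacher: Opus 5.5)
Your proposal is correct and follows essentially the same route as the paper. Consistency and exchangeability let the $b$ blocks of $\rho$ evolve as a coalescent on $[b]$, so the canonical block ordering identifies $\rho\to\sigma$ with the single labeled coordinate $0_b\to\theta_{\rho,\sigma}$, with no multiplicity factor. The diagonal entry $-R_b$ then comes from summing over that bijection, and $b\le n-1$ puts every entry among the lower-level data; the fact that the rate in \eqref{eq:xi-schweinsberg-rates} is indexed only by $b$ and the group sizes already gives the intertwining you wanted to justify.
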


\begin{proof}
The consistency and exchangeability of the coalescent imply that, from a
state with $b$ blocks, the current blocks evolve as a coalescent on $[b]$.
The canonical ordering of the blocks of $\rho$ identifies a particular
transition $\rho\to\sigma$ with the single labeled coordinate
$0_b\to\theta_{\rho,\sigma}$, which proves the off-diagonal identity in
\eqref{eq:xi-projective-generator}.  There is no multiplicity factor:
distinct labeled coarsenings index distinct coordinates of $r_b$;
multiplicities enter only when those coordinates are summed to form $R_b$.
The diagonal entry is therefore $-R_b$, and coalescents have no transitions
to a finer or incomparable partition.  Since every $\rho\in D_n$ has at
most $n-1$ blocks, all entries on the right-hand side are among the assumed
lower-level data.
\end{proof}

Recovering a generator from a transition matrix is the classical Markov
embedding problem \cite{IsraelRosenthalWei2001}.  The lemma shows why one
row suffices here: restrictions with fewer blocks determine the generator
after the first collision.  The remaining step is a triangular linear
inversion.  For the coupon-collection chain on the Boolean lattice,
Baake and Baake~\cite{BaakeBaake2026} use incidence-algebra methods to characterize its
continuous-time embeddings.

\begin{theorem}
\label{thm:xi-fixed-time}
For every known \(t>0\), the map
\[
 \Xi\longmapsto\mathcal L(P^\Xi(t))
\]
is a topological embedding from the space of finite nonnegative collision
measures on $\K$ into $\cP(\K)$, with both spaces equipped with the weak
topology induced by the product topology on $\K$.
Moreover,
\begin{equation}
 1-\E V_2(P^\Xi(t))=e^{-t\Xi(\K)}.
 \label{eq:xi-pair-identity}
\end{equation}
Thus an unknown positive time is also recovered on \(\Xi(\K)=1\).  In
general the endpoint law identifies exactly \(t\Xi\); when \(\Xi=0\), time
itself is not identifiable.
\end{theorem}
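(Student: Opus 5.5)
The plan has four steps, carried out in this order: injectivity, the identity \eqref{eq:xi-pair-identity}, continuity, and the embedding conclusion.

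\emph{Injectivity.} By Kingman's paintbox theorem, $\Law(P^\Xi(t))$ determines, for every $n$, the law of the restriction $\Pi_n(t)$. In particular it determines the endpoint row $w_n=(\Pp(\Pi_n(t)=\pi))_{\pi\in\mathcal S_n}$ of $e^{tQ_n}$ started at $0_n$. I will show by induction on $n$ that $(R_n,r_n)$ is a function of $w_1,\dots,w_n$.

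At level $n$, the probability of staying at $0_n$ gives $R_n=-t^{-1}\log w_n(0_n)$. By Lemma~\ref{lem:xi-projective-subgenerator}, the subgenerator $B_n$ is known from lower levels. Variation of constants, that is, decomposing at the first jump out of $0_n$, gives
\[
 (w_n(\pi))_{\pi\in D_n}=r_n\,G_n,\qquad G_n=\int_0^te^{-R_n(t-u)}e^{uB_n}\dd u ,
\]
where $G_n$ is a known matrix. Since $B_n$ is upper triangular in the refinement order, so is $e^{uB_n}$, with positive diagonal entries $e^{-uR_b}$. Hence $G_n$ is triangular with strictly positive diagonal, and $r_n=w_nG_n^{-1}$ is recovered.

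Once all rates $\lambda^\Xi_{b;k_1,\dots;s}$ are known, Schweinsberg's representation \cite{Schweinsberg2000} shows that they determine $\Xi$. I will rederive this step if needed via Stone--Weierstrass. The monomial-symmetric functions $\sum x_{i_1}^{k_1}\cdots$ integrated against $\Xi_0/V_2$, together with the term $a$, determine the integrals against $\Xi$ of a separating unital algebra of continuous functions on $\K$ after multiplication by $V_2$. For example, $\lambda_{2;2;0}=a+\int V_2\,\Xi_0/V_2=\Xi(\K)$, and the higher rates with $V_2$ factored out give integrals of $V_2\cdot(\text{polynomial in power sums})/V_2$. This yields $\Xi=\Xi'$.

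\emph{The identity \eqref{eq:xi-pair-identity}.} Take $n=2$. Then $\E V_2(P(t))=\Pp(1\sim2)$ and the pair merges at rate $\lambda_{2;2;0}=\Xi(\K)$, so $1-\E V_2(P^\Xi(t))=e^{-t\Xi(\K)}$. This also gives the time recovery under the normalization $\Xi(\K)=1$. Scaling, $P^{c\Xi}(t)\overset{d}{=}P^\Xi(ct)$, shows that only $t\Xi$ is identifiable, and when $\Xi=0$ every time gives the same law.

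\emph{Continuity.} Suppose $\Xi_k\Rightarrow\Xi$ as finite measures. Each rate is $\int f\,\dd\Xi$ for an $f$ that I must show is continuous on $\K$ with $f(0)=\mathbf 1_{\{r=1,k_1=2\}}$. The main obstacle is the continuity at $0$ and under the loss of mass to dust, where $|x|$ is discontinuous. I would handle it by rewriting the integrand via inclusion--exclusion so that every term is a polynomial in $V_m$ with $m\ge2$ divided by $V_2$, or else carries the factor $(1-|x|)$ paired with a $V_{\ge2}$ term. I would then verify continuity directly: the rate is the probability of a specific paintbox event for $b$ iid points divided by $V_2$, i.e.\ the probability of the pattern given that points $1$ and $2$ of the specified group collide. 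This conditional probability is continuous as $x\to0$, with limit $\mathbf 1_{\{r=1,k_1=2\}}$. Given continuity, the finite generators converge, so the finite-dimensional laws $e^{tQ_n}$ converge. Consistent finite EPPFs determine the weak topology on $\cP(\K)$, because cylinder EPPF probabilities are integrals of polynomials in the $V_m$, which are dense in $C(\K)$. Hence the map is continuous.

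\emph{Embedding.} The space of finite collision measures is not compact, so compactness alone does not finish. Instead, I will restrict to $\{\Xi(\K)\le C\}$, which is compact, and on which the continuous injection is a homeomorphism. The identity \eqref{eq:xi-pair-identity} shows that convergence of the image laws forces $\Xi_k(\K)$ to be bounded, because $\E V_2$ is weakly continuous and the limit satisfies $e^{-t\Xi(\K)}>0$. Therefore, if the images converge, the $\Xi_k$ lie in some such compact set, and the inverse is continuous.
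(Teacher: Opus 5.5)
Your proposal is correct and follows the paper's proof essentially step by step: the paintbox passage to the labeled endpoint rows, then $R_n$ from the no-jump probability and $B_n$ from Lemma~\ref{lem:xi-projective-subgenerator}. It continues with inversion of the triangular variation-of-constants matrix (your $G_n$ is the paper's $K_n$ after $u=t-s$), Schweinsberg's uniqueness, continuity through the bounded kernels $g_\pi=N_\pi/V_2$, and the pair identity to bound $\Xi_k(\K)$ for continuity of the inverse. The only real difference is that you obtain weak convergence of the image laws from density of power-sum polynomials rather than from compactness plus Kingman uniqueness, and two details need care. First, continuity of $N_\pi$ comes from its M\"obius expression purely in products of $V_m$ with $m\ge2$; your alternative of terms carrying $(1-|x|)$ would not be continuous. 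Second, the limit of $g_\pi$ at the origin still needs the elementary bounds the paper supplies: a further label hits the merging atom with conditional probability at most $x_1$, and two further labels collide with probability at most $V_2\le x_1$.
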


\begin{proof}
We first pass from the ranked-frequency law to finite labeled restrictions.
For a partition \(\pi\) of \([n]\), the probability, conditional on
\(P^\Xi(t)=p\), that the paintbox partition is coarser than \(\pi\) is
\begin{equation}
 F_\pi(p)=\prod_{B\in\pi:\ |B|\geq2}V_{|B|}(p).
 \label{eq:xi-paintbox-coarser}
\end{equation}
Here ``coarser'' is in the refinement order: every block of $\pi$ is
contained in one paintbox block.  M\"obius inversion on the finite partition
lattice recovers every exact labeled restriction probability from these
probabilities, in accordance with Kingman's paintbox representation
\cite{Kingman1978}.  Each \(V_k\) is continuous by \eqref{eq:Vs-tail}.
Hence the frequency law determines the complete endpoint row
\[
 p_n(t)=\bigl(\mathbb P_{0_n}\{\Pi_n^\Xi(t)=\pi\}\bigr)_{\pi\in\mathcal S_n}
\]
for every \(n\).  Zero-frequency blocks are almost surely singleton blocks
in an exchangeable paintbox, so no nonsingleton information is hidden in the
dust.

We use row vectors, so
\(p_n'(t)=p_n(t)Q_n\), and write
\begin{equation}
 Q_n=\begin{pmatrix}-R_n&r_n\\0&B_n\end{pmatrix}.
 \label{eq:xi-block-generator}
\end{equation}
Here \(r_n\) is the row of direct rates from the singleton partition and
\(B_n\) is the subgenerator on \(D_n\).  By
Lemma~\ref{lem:xi-projective-subgenerator}, $B_n$ is determined coordinatewise
by the direct rows already recovered at levels $b<n$.  Since the chain cannot
return to \(0_n\),
\begin{equation}
 p_n(t;0_n)=e^{-R_nt},\qquad R_n=-t^{-1}\log p_n(t;0_n).
 \label{eq:xi-no-jump}
\end{equation}
Variation of constants, with the displayed row orientation, gives
\begin{equation}
 p_n(t;D_n)=r_nK_n,
 \qquad
 K_n=\int_0^t e^{-R_ns}e^{(t-s)B_n}\,ds.
 \label{eq:xi-one-row-inversion}
\end{equation}
The matrix \(K_n\) is triangular.  If \(\pi\in D_n\) has \(b\) blocks,
its diagonal entry is
\begin{equation}
 (K_n)_{\pi\pi}=\int_0^te^{-R_ns}e^{-R_b(t-s)}\,ds>0.
 \label{eq:xi-volterra-diagonal}
\end{equation}
When \(R_n=R_b\), this equals \(te^{-R_nt}\), rather than a singular
quotient.  Therefore \(K_n\) is invertible and
\begin{equation}
 r_n=p_n(t;D_n)K_n^{-1}.
 \label{eq:xi-rate-recovery}
\end{equation}
Starting at \(n=2\), this recovers every specified collision rate.  The
representation in Theorem~2 and formula~(11) of Schweinsberg~\cite{Schweinsberg2000},
together with its uniqueness assertion in Proposition~4, then recovers
\(\Xi\) from the complete array \eqref{eq:xi-schweinsberg-rates}.

The first few restrictions are as follows.  For \(n=2\),
\(R_2=\lambda^\Xi_{2;2;0}=\Xi(\K)\), and
\[
 K_2=\int_0^te^{-R_2s}\,ds=
 \begin{cases}(1-e^{-R_2t})/R_2,&R_2>0,\\ t,&R_2=0.\end{cases}
\]
For \(n=3\), put \(u=\lambda^\Xi_{3;2;1}\),
\(v=\lambda^\Xi_{3;3;0}\), and \(R_3=3u+v\).  Then, for each labeled
pair $ij$ and remaining label $k$,
\[
 p_3(t;ij\mid k)=u\kappa_{3,2},\qquad
 p_3(t;123)=3u(\kappa_{3,1}-\kappa_{3,2})+v\kappa_{3,1},
\]
where \(\kappa_{3,b}=\int_0^te^{-R_3s}e^{-R_b(t-s)}\dd s\) for \(b=2\),
and \(\kappa_{3,1}=\int_0^te^{-R_3s}\dd s\).  Thus \(u\), and then \(v\),
are recovered, including when \(R_3=R_2\).

At level $n=4$, the labeled row separates the $2+2$ and $3+1$ collision
patterns that block counts would aggregate.  The explicit back-substitution
is recorded in Appendix~\ref{app:xi-n4}.

It remains to prove the continuity assertion in the product topology.  Let
$\Pi^x_b$ be the paintbox on $[b]$ with ranked frequencies $x$ and dust
$1-|x|$.  For every $\pi\in\mathcal S_b$, put
\[
 N_\pi(x)=\Pp\{\Pi^x_b=\pi\}.
\]
For $\pi\ne0_b$, this is the numerator of the integral in
\eqref{eq:xi-schweinsberg-rates}.  For $\sigma\in\mathcal S_b$, the
continuous function $F_\sigma$ in \eqref{eq:xi-paintbox-coarser} satisfies
\[
 F_\sigma=\sum_{\pi:\,\sigma\preceq\pi}N_\pi,
\]
where $\preceq$ denotes refinement.  M\"obius inversion on the finite
lattice $\mathcal S_b$ therefore expresses each $N_\pi$ as a finite linear
combination of continuous products of power sums.  In particular, $N_\pi$
is continuous on all of $\K$; no continuity of $|x|$ is needed.

For a nontrivial $\pi$, set $g_\pi=N_\pi/V_2$ on $\K\setminus\{0\}$.
Since the event
$\{\Pi^x_b=\pi\}$ forces one fixed pair to choose the same positive atom,
\begin{equation}
 0\leq N_\pi(x)\leq V_2(x),
 \qquad 0\leq g_\pi(x)\leq1.
 \label{eq:xi-kernel-bounded}
\end{equation}
We next identify its limit at the origin.  For the pattern consisting of one
binary merger and $s$ singleton blocks, condition on the atom chosen by the
merging pair.  A union bound over a singleton choosing that atom and over two
singletons choosing a common positive atom gives
\begin{equation}
 0\leq1-g_{(2;s)}(x)
 \leq s x_1+\binom{s}{2}V_2(x).
 \label{eq:xi-simple-binary-bound}
\end{equation}
If a merger group has size $k\geq3$, then
\[
 g_\pi(x)\leq\frac{V_k(x)}{V_2(x)}\leq x_1^{k-2}.
\]
If instead there are at least two merger groups and all are binary, then
\[
 g_\pi(x)
 \leq\frac{\sum_{i\ne j}x_i^2x_j^2}{V_2(x)}
 \leq V_2(x).
\]
Convergence to zero in the product topology is equivalent here to
$x_1\to0$, and $V_2(x)\leq x_1|x|\leq x_1$.  It follows that $g_\pi$
extends continuously to zero with value one for a single binary merger,
irrespective of the number of untouched singleton blocks, and value zero for
all other collision patterns.  These values agree with
Schweinsberg~\cite[Lemma~26]{Schweinsberg2000}.  With this extension, the full collision
rate, including the Kingman atom, is
\begin{equation}
 \lambda^\Xi_\pi=\int_\K g_\pi(x)\,\Xi(dx).
 \label{eq:xi-rate-continuous-kernel}
\end{equation}
Thus weak convergence of finite collision measures gives convergence of
every rate, every finite generator, and every finite endpoint row; compare
Schweinsberg~\cite[Proposition~27]{Schweinsberg2000}.

Write $F_t(\Xi)=\Law(P^\Xi(t))$.  If $\Xi_m\Rightarrow\Xi$ as finite
measures, take a subsequential weak limit of $F_t(\Xi_m)$, which exists
because $\cP(\K)$ is compact.  By
\eqref{eq:xi-paintbox-coarser} and \eqref{eq:Vs-tail}, every finite
paintbox restriction of this limit is the endpoint restriction generated
by $\Xi$.  Kingman's paintbox uniqueness \cite{Kingman1978} identifies
the limit with $F_t(\Xi)$.  Thus $F_t$ is continuous.

Two specified labels merge at rate $\Xi(\K)$, by
Schweinsberg~\cite[equation~(12)]{Schweinsberg2000}.  Conditional on the frequencies,
their probability of lying in the same block is $V_2(P)$, proving
\eqref{eq:xi-pair-identity}.  Now suppose that
$F_t(\Xi_m)\Rightarrow F_t(\Xi)$.  Since $V_2$ is continuous,
\[
 \Xi_m(\K)
 =-t^{-1}\log\left(1-\int_\K V_2(p)\,F_t(\Xi_m)(\dd p)\right)
 \longrightarrow\Xi(\K).
\]
The limiting argument of the logarithm is $e^{-t\Xi(\K)}>0$.
The measures $\Xi_m$ consequently have bounded total masses and are
relatively compact for weak convergence on compact $\K$.  Every
subsequential limit $\Xi'$ satisfies $F_t(\Xi')=F_t(\Xi)$ by continuity,
so injectivity gives $\Xi'=\Xi$.  Hence $\Xi_m\Rightarrow\Xi$.
Both spaces are metrizable, so this proves continuity of the inverse on
the image and the embedding assertion, including at $\Xi=0$.

Under unit-mass normalization, \eqref{eq:xi-pair-identity} recovers $t$.
More generally, the time-$t$ endpoint for $\Xi$ is the time-one endpoint
for $t\Xi$, proving the exact time--scale assertion.
\end{proof}

\begin{corollary}[Scalar recovery of the collision measure]
\label{cor:scalar-xi}
Let $X\in L^1$ be centered and have a distribution that is determining for
random mass partitions, and mark the blocks of the $\Xi$-coalescent by iid
copies of $X$.  For every known $t>0$, the map
\begin{equation}\label{eq:scalar-xi-observation}
 \Xi\longmapsto
 \Law\left(\sum_{j\ge1}P_j^\Xi(t)X_j\right)
\end{equation}
is a topological embedding from the finite nonnegative collision measures
on $\K$ into $\cP(\R)$, with both spaces equipped with weak convergence.
If $\Xi(\K)=1$, the same law
determines an unknown $t>0$.  Without normalization it determines exactly
$t\Xi$; for $\Xi=0$, time is not identifiable.  The conclusion applies to
the stable mark $X_*$ and, for every prescribed $q\in(0,\infty)$, to the
compound Poisson mark $X^{(q)}\in L^q$ of
Theorem~\ref{thm:compound-poisson}.
\end{corollary}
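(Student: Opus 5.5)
The corollary follows by composing the two embeddings already established. Write $F_t(\Xi)=\Law(P^\Xi(t))$, the map from Theorem~\ref{thm:xi-fixed-time}. The marks are iid copies of $X$ and independent of the coalescent. Hence the law of $\sum_jP^\Xi_j(t)X_j$ is $T_X(F_t(\Xi))$, and the observation map \eqref{eq:scalar-xi-observation} equals $T_X\circ F_t$. The independence of marks and coalescent is exactly what makes this identification valid: conditionally on $P^\Xi(t)=p$, the marked sum has characteristic function $k_u^X(p)$, and integrating against $F_t(\Xi)$ gives the characteristic function of $T_X(F_t(\Xi))$.

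\textbf{Embedding property.} I would proceed in three steps.
\begin{enumerate}
\item By Lemma~\ref{lem:kernel-continuity}, $T_X$ is weakly continuous. By hypothesis it is injective, since $X$ is determining. Because $\cP(\K)$ is compact and $\cP(\R)$ is Hausdorff, $T_X$ is a homeomorphism onto its image.
\item By Theorem~\ref{thm:xi-fixed-time}, $F_t$ is a topological embedding of the finite collision measures into $\cP(\K)$.
\item A composition of topological embeddings is a topological embedding.
\end{enumerate}
The domain of $F_t$ is not compact, so one should not rely on a compactness argument there. That is harmless, because the embedding property of $F_t$ is already part of Theorem~\ref{thm:xi-fixed-time}. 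Only $T_X$ needs the compactness argument, and that is available on $\cP(\K)$.

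\textbf{Time and scale assertions.} Since $T_X$ is injective, the marked law determines $F_t(\Xi)$. Then \eqref{eq:xi-pair-identity} gives $e^{-t\Xi(\K)}=1-\int V_2\,\dd F_t(\Xi)$. Under the normalization $\Xi(\K)=1$ this recovers $t$. In general, $F_t(\Xi)=F_1(t\Xi)$, and $F_1$ is injective, so the marked law determines exactly $t\Xi$. When $\Xi=0$, $P^\Xi(t)$ consists of dust only for every $t$, so the output is $\delta_0$ regardless of $t$, and time is not identifiable.

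\textbf{Application to specific marks.} The conclusion applies to $X_*$ by Theorem~\ref{thm:intro-identifying}, and to $X^{(q)}$ by Theorem~\ref{thm:compound-poisson}. Both are centered, integrable, and determining. No step is a real obstacle; the only points needing care are the identification of the mixed output law with $T_X\circ F_t$ and the non-compactness of the domain of $F_t$, both handled above.
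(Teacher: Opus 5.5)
Your proposal is correct and follows the paper's proof. Like the paper, you write the observation map as $T_X\circ F_t$, with $F_t$ an embedding by Theorem~\ref{thm:xi-fixed-time} and $T_X$ a continuous injection on the compact space $\cP(\K)$, and you obtain the time and scale assertions from \eqref{eq:xi-pair-identity} together with $F_t(\Xi)=F_1(t\Xi)$.
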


\begin{proof}
The observation map in \eqref{eq:scalar-xi-observation} is the composition
\[
 \Xi\longmapsto\Law(P^\Xi(t))
 \longmapsto T_X\bigl(\Law(P^\Xi(t))\bigr).
\]
The first map is an embedding on all finite collision measures by
Theorem~\ref{thm:xi-fixed-time}.  The second map, $T_X$, is a continuous
injection on compact $\cP(\K)$ and hence an embedding.  Their composition
is therefore an embedding.  If time is unknown,
\eqref{eq:xi-pair-identity} recovers it under unit mass.  In general, view
the recovered endpoint distribution as a time-one law and apply
Theorem~\ref{thm:xi-fixed-time} to recover the product $t\Xi$.
\end{proof}

\begin{corollary}[A scalar Fleming--Viot observation]
\label{cor:scalar-xifv}
Let $H$ be a known diffuse probability measure on a compact Polish space
$E$, and let $(M_t^\Xi)_{t\ge0}$ be the neutral $\Xi$-Fleming--Viot process,
without mutation or selection, started from $H$.  If $f:E\to\R$ is measurable and
$f_\#H=\Law(X)$ for a determining mark distribution, then, for every known
$t>0$, the observation
\[
 M_t^\Xi(f)=\int_E f(z)\,M_t^\Xi(\dd z)
\]
defines a topological embedding
$\Xi\mapsto\Law(M_t^\Xi(f))$ from the finite nonnegative collision
measures on $\K$ into $\cP(\R)$, with both spaces equipped with weak
convergence.  Under $\Xi(\K)=1$, the same law also determines an unknown
positive time.
\end{corollary}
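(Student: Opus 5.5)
The approach is to show that the observation $\Law(M_t^\Xi(f))$ is the marked-sum law $T_X(\Law(P^\Xi(t)))$, and then to apply Corollary~\ref{cor:scalar-xi}, exactly as in the proof of Corollary~\ref{cor:affine-random-measure}. We use the lookdown (or duality) representation of the neutral $\Xi$-Fleming--Viot process started from the diffuse measure $H$. At time $t$ the process is the de Finetti measure of the types of an exchangeable sequence of individuals. Their genealogy back to time $0$ is the $\Xi$-coalescent restricted to $[n]$, run for time $t$ and started from singletons. Each block of the partition at time $t$ carries a type drawn independently from $H$, because there is no mutation and $H$ is the initial law; distinct blocks get independent types.

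\textbf{Step 1.} Justify the representation
\[
 M_t^\Xi\overset{d}{=}\Bigl(1-\sum_{j\ge1}P_j^\Xi(t)\Bigr)H+\sum_{j\ge1}P_j^\Xi(t)\delta_{Z_j},
\]
with $(Z_j)$ iid $H$ and independent of $P^\Xi(t)$. I would check this through moment duality. For every $n$ and bounded $g_1,\dots,g_n$, the quantity $\E\prod_{i\le n}M_t^\Xi(g_i)$ equals $\E\prod_{B\in\Pi_n(t)}H\bigl(\prod_{i\in B}g_i\bigr)$, where $\Pi_n(t)$ is the coalescent restricted to $[n]$. The right-hand random measure gives the same expression by the paintbox correspondence. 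Singleton (dust) indices each see an independent $H$-sample, and diffuseness of $H$ makes the identification of ties unambiguous. Since $E$ is compact, moments of this kind determine the law of a random probability measure on $E$, so the two laws agree. I expect this step to be the main obstacle, mostly for citing the correct duality convention (Schweinsberg's rates, \cite{Schweinsberg2000}) and checking that the dust term is handled correctly.

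\textbf{Step 2.} Compute $M_t^\Xi(f)$. Because $f_\#H=\Law(X)$ and a determining mark is centered, $\int f\,\dd H=0$. Integrability of $f$ under $H$ makes the atomic series converge absolutely almost surely, as in \eqref{eq:absolute-marked-series}. Hence
\[
 M_t^\Xi(f)\overset{d}{=}\sum_j P_j^\Xi(t)f(Z_j),
\]
and the variables $f(Z_j)$ are iid with law $\Law(X)$ conditionally on $P^\Xi(t)$. It follows that $\Law(M_t^\Xi(f))=T_X(\Law(P^\Xi(t)))$.

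\textbf{Step 3.} Conclude. The map $\Xi\mapsto\Law(M_t^\Xi(f))$ coincides with \eqref{eq:scalar-xi-observation}, so Corollary~\ref{cor:scalar-xi} gives the topological embedding. The same corollary, through \eqref{eq:xi-pair-identity}, recovers an unknown time when $\Xi(\K)=1$. Explicitly, $T_X$ is injective, so it recovers $\Law(P^\Xi(t))$; this gives $\E V_2(P^\Xi(t))=1-e^{-t}$, and hence $t$.
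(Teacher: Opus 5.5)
Your proposal is correct and follows the paper's proof. You represent $M_t^\Xi$ as $(1-\sum_jP_j^\Xi(t))H+\sum_jP_j^\Xi(t)\delta_{Z_j}$ with conditionally iid $H$-distributed types, use $\int f\,\dd H=\E X=0$ to remove the dust term so that $\Law(M_t^\Xi(f))=T_X(\Law(P^\Xi(t)))$, and then invoke Corollary~\ref{cor:scalar-xi}. The one difference is that you verify the representation through moment duality and moment determinacy of random probability measures on compact $E$, whereas the paper cites the lookdown/de Finetti construction of Birkner et~al.; your justification is valid, and it does not actually use the diffuseness of $H$.
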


\begin{proof}
The neutral lookdown construction and its de Finetti representation
\cite[Theorem~1.1, Corollary~2.8, and Section~5.1, especially
equation~(5.3)]{BirknerBlathMohleSteinruckenTams2009} give
\[
 M_t^\Xi\ \stackrel{d}{=}\
 \left(1-\sum_{j\ge1}P_j^\Xi(t)\right)H
 +\sum_{j\ge1}P_j^\Xi(t)\delta_{Z_j}.
\]
At time zero the exchangeable population has iid types with law $H$.
Conditionally on the reproduction structure, neutral reproduction only
copies parental types and distinct ancestral lineages select distinct
initial individuals.  Their types are therefore iid with law $H$ and
independent of that structure.  Consequently, conditionally on
$P^\Xi(t)$, the variables $Z_j$ in the display are iid with law $H$.
In this representation,
\[
 \E\left[M_t^\Xi(|f|)\mid P^\Xi(t)\right]
 =\int_E|f(z)|\,H(\dd z)=\E|X|<\infty,
\]
so $M_t^\Xi(f)$ is well defined almost surely.  Since a determining mark is
centered,
$M_t^\Xi(f)$ has the law in \eqref{eq:scalar-xi-observation}.  The claims
follow from Corollary~\ref{cor:scalar-xi}.
\end{proof}

\section{A uniform obstruction for finite-support marks}
\label{sec:finite-support}

The proof of Proposition~\ref{prop:intro-finite} is a localized form of the
projection obstruction in
Cuesta-Albertos et~al.~\cite[Theorem~3.5]{CuestaAlbertosFraimanRansford2007}.  We construct
$C_c^\infty$ densities arbitrarily close in total variation whose marked-sum
laws agree simultaneously for all mark distributions with the prescribed
support bound.
Put $M=d+1$ and use the fixed-total-mass slice
$\mathcal F_{M,s}^{\circ}$ defined in \eqref{eq:strict-face}.
In this section $T_X$ is understood from the same formula
\eqref{eq:TX-intro} for every real finite-support mark, whether centered or
not.

\begin{proof}[Proof of Proposition~\ref{prop:intro-finite}]
This slice has the affine chart
\begin{align}
 C_s&=\left\{y\in\R^d:
 y_1>\cdots>y_d>s-\sum_{j=1}^dy_j>0\right\},\label{eq:Cs}\\
 \iota_s(y)&=\left(y_1,\ldots,y_d,
 s-\sum_{j=1}^dy_j,0,0,\ldots\right).
 \label{eq:iota-s}
\end{align}
The chamber is nonempty.  For example, with $1\le i\le M$,
\begin{equation}\label{eq:strict-point}
 p_i=\frac{2s(M+1-i)}{M(M+1)}
\end{equation}
gives a strictly decreasing positive vector of total mass $s$.
The map $\iota_s$ is an affine diffeomorphism from $C_s$ onto
$\mathcal F_{M,s}^{\circ}$.

Let $U$ be as in the proposition.  Its inverse image under $\iota_s$ contains an
open ball $B$ whose closure is contained in that inverse image.  Choose a
nonzero real function $g\in C_c^\infty(B)$.

For $b=(b_1,\ldots,b_d)$, evaluate the Vandermonde polynomial with one
coordinate fixed at zero:
\begin{equation}\label{eq:W-polynomial}
 W(b)=\prod_{i=1}^db_i
      \prod_{1\le i<j\le d}(b_i-b_j),
 \qquad N:=\deg W=\frac{d(d+1)}2.
\end{equation}
For $a=(a_1,\ldots,a_M)\in\R^M$, put
\begin{equation}\label{eq:anchored-b}
 b(a)=(a_1-a_M,\ldots,a_d-a_M).
\end{equation}
If $a$ has at most $d$ distinct coordinates, two of its $M=d+1$
coordinates agree.  If $a_i=a_M$ for some $i\le d$, then the corresponding
factor $b_i(a)$ vanishes.  Otherwise two of the first $d$ coordinates agree,
and one factor $b_i(a)-b_j(a)$ vanishes.  Thus
\begin{equation}\label{eq:W-zero}
 W(b(a))=0
 \quad\text{whenever }a\text{ has at most }d\text{ distinct coordinates}.
\end{equation}

Let $D=(\partial_1,\ldots,\partial_d)$ and set
\begin{equation}\label{eq:h-multiplier}
 h=W(D)^2g.
\end{equation}
Then $h$ is real, smooth and supported in $B$.  With the Fourier convention
$\widehat f(\xi)=\int_{\R^d}e^{i\xi\cdot y}f(y)\dd y$,
\begin{equation}\label{eq:hhat}
 \widehat h(\xi)=(-1)^NW(\xi)^2\widehat g(\xi).
\end{equation}
The perturbation is nonzero.  Indeed, if $h\equiv0$, then the entire
Fourier transform $\widehat g$ would vanish on the nonempty Euclidean open
set $\{\xi\in\R^d:W(\xi)\ne0\}$.  Since the restriction of $\widehat g$
to $\R^d$ is real analytic, it would then vanish on the connected space
$\R^d$; injectivity of the Fourier transform would give $g=0$, a
contradiction.  Also
\begin{equation}\label{eq:h-zero-integral}
 \int_{\R^d}h(y)\dd y=\widehat h(0)=0.
\end{equation}

Choose $\psi\in C_c^\infty(B)$, $\psi\ge0$, which is strictly positive on
the compact set $K_h=\supp h$, and normalize
$f=\psi/\int\psi$.  Put
\[
 m=\min_{K_h}f>0,
 \qquad L=\|h\|_\infty>0.
\]
For
\[
 0<\varepsilon<
 \min\left\{\frac mL,\frac{\delta}{2\int_{\R^d}|h(y)|\dd y}\right\},
\]
the functions
\begin{equation}\label{eq:fpm}
 f_+=f+\varepsilon h,
 \qquad f_-=f-\varepsilon h
\end{equation}
are nonnegative: on $K_h$ this follows from the choice of $\varepsilon$, and
off $K_h$ they equal $f$.  They both integrate to one by
\eqref{eq:h-zero-integral}, and they differ because $h\ne0$.  Define
\begin{equation}\label{eq:Hpm}
 H_{d,s,\pm}=(\iota_s)_\#(f_\pm(y)\dd y).
\end{equation}
These are distinct probability laws compactly supported in $U$.
Their pullbacks under $\iota_s$ have the $C_c^\infty$ densities $f_\pm$,
and, since $\iota_s$ is a Borel isomorphism onto its image, pushforward
preserves total variation.  Hence
\[
 0<\|H_{d,s,+}-H_{d,s,-}\|_{\TV}
   =2\varepsilon\int_{\R^d}|h(y)|\dd y<\delta.
\]

It remains to prove the simultaneous observation identity.  For
$a\in\R^M$, the affine chart gives
\begin{equation}\label{eq:projection-chart}
 a\cdot\iota_s(y)=sa_M+b(a)\cdot y.
\end{equation}
If $a$ has at most $d$ distinct coordinates, then for every $t\in\R$,
\begin{align}
 &\int e^{it a\cdot\iota_s(y)}(f_+(y)-f_-(y))\dd y \notag\\
 &\quad=2\varepsilon e^{itsa_M}\widehat h(tb(a))\notag\\
 &\quad=2\varepsilon e^{itsa_M}(-1)^Nt^{2N}
        W(b(a))^2\widehat g(tb(a))=0.
 \label{eq:projection-identity}
\end{align}
Thus the pushforwards of $H_{d,s,+}$ and $H_{d,s,-}$ under
$p\mapsto a\cdot p$ coincide.

Now let $F$ be any real mark distribution supported on at most $d$ points.
For every realization
$a=(X_1,\ldots,X_M)$ of the iid mark vector, $a$ has at most $d$
distinct coordinates, so \eqref{eq:projection-identity} applies pointwise.
Integrating it with respect to $F^{\otimes M}$ gives, for every $t\in\R$,
\[
 \E_{H_{d,s,+}}\exp\left\{it\sum_{i=1}^MP_iX_i\right\}
 =
 \E_{H_{d,s,-}}\exp\left\{it\sum_{i=1}^MP_iX_i\right\}.
\]
Characteristic-function uniqueness proves the proposition.  The pair in
\eqref{eq:Hpm} depends only on $(d,s,U,\delta)$ and is chosen independently of the
atom locations and probabilities of $F$.
\end{proof}

The construction makes the polynomial obstruction and affine localization in
Lin~\cite[Lemma~2.3.5 and Proposition~2.3.6 in the arXiv version]{Lin2016}
explicit for the directions
produced by finitely supported marks.  The earlier Fourier construction of
B{\'e}lisle et~al.~\cite[Theorem~5.4 and Lemma~5.5]{BelisleMasseRansford1997} prescribes families
of coinciding projections together with moment bounds.  For mixture models
with conditional densities, Le et~al.~\cite[Theorem~1]{LeBarilettoRinaldoHo2026} use
adjoint differential operators to construct distinct mixing measures with the
same mixture density.

\begin{remark}[Centering and the case $d=1$]
Every finite-support mark is integrable.  If $m_X=\E X$ and
$X^\circ=X-m_X$, then on the slice of total mass $s$,
\begin{equation}\label{eq:centering-translation}
 \sum_iP_iX_i=\sum_iP_iX_i^\circ+s m_X.
\end{equation}
Thus centering translates both observations by the same deterministic
constant.  The fixed total mass is essential to this conclusion.  When
$d=1$, every allowed mark is a constant, so every input law concentrated on
partitions of the same total mass has the same constant output.
\end{remark}

\begin{corollary}\label{cor:no-finite-identifying}
No real mark distribution with finite support is determining for random mass
partitions on $\cP(\K)$.
\end{corollary}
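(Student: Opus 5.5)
The plan is to reduce the corollary directly to Proposition~\ref{prop:intro-finite}.

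Let $F$ be a real distribution with finite support, say with $k$ atoms. Set $d=\max\{k,2\}$, so $d\ge2$ and $F$ is supported on at most $d$ points. Fix $s=1$ and take $U=\mathcal F_{d+1,1}^{\circ}$, which is nonempty by the explicit point \eqref{eq:strict-point}. Let $\delta=1$. Proposition~\ref{prop:intro-finite} then gives distinct laws $H_{d,1,+}\ne H_{d,1,-}$ in $\cP(\K)$ with $T_X(H_{d,1,+})=T_X(H_{d,1,-})$ whenever $X\sim F$.

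The only point to check is that this identity uses the same observation operator $T_X$ that appears in the definition of ``determining''. The definition in the introduction is stated for centered marks in $L^1$. A finite-support mark is automatically integrable. The proof of the proposition in fact establishes the identity for arbitrary finite-support marks, centered or not, through the pointwise projection identity \eqref{eq:projection-identity}. So if $F$ is centered, the conclusion already says that $T_X$ is not injective on $\cP(\K)$.

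If $F$ is not centered, it is not determining under the definition, since that definition requires centering. If one also wants to rule out the uncentered formula \eqref{eq:TX-intro} being injective, the identity from the proposition already applies to $X$ itself and gives the same failure of injectivity. Alternatively, Remark~\eqref{eq:centering-translation} shows that centering only translates both outputs by the same constant $s\,\E X$ on the fixed-mass slice. In either case $T_X$ fails to be injective, and no finite-support mark is determining.

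There is no real obstacle here. The only care needed is with the case of a constant mark, $k=1$, which is handled by taking $d=2$, and with confirming that the proposition applies uniformly to every mark with at most $d$ atoms.
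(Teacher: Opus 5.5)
Your proof is correct and follows the paper's route: apply Proposition~\ref{prop:intro-finite} with $d$ equal to the support size. The only difference is that you absorb the one-point case into $d=2$, since a constant mark is supported on at most two points; the paper instead handles it with the centering remark, where a constant mark gives the same constant output for every input law of a fixed total mass, and both arguments work.
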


\begin{proof}
For a one-point support, use the preceding remark.  If the support has
cardinality $d\ge2$, apply Proposition~\ref{prop:intro-finite} with that value of
$d$.
\end{proof}

For partitions of a fixed finite set, divide-and-color maps are finite
linear operators.  Their nontrivial kernels are studied in
Steif and Tykesson~\cite[Theorem~2.1]{SteifTykesson2019}, and their ranks and nullities,
including the permutation-invariant restriction, are computed by
Forsstr{\"o}m and Steif~\cite[Theorems~1, 2 and~5]{ForsstromSteif2021}.  Proposition~\ref{prop:intro-finite}
concerns distributions of mass partitions and gives indistinguishability
simultaneously for all atom locations and probabilities within the support
bound.

\section{The \texorpdfstring{$\Lambda$}{Lambda} subclass}
\label{sec:lambda}

The structural restriction to $\Lambda$-coalescents permits identification
both from an asymmetric Bernoulli coloring and from a class of unbounded
marks.  Both arguments recover successive Hausdorff moments of the collision
measure.

Let $\Lambda\in\cP([0,1])$, and let
$(\Pi^\Lambda(t))_{t\ge0}$ be the standard $\Lambda$-coalescent started
from the singleton partition, in the convention of
Pitman~\cite[Theorem~1 and formula~(1)]{Pitman1999}.  If there are $b$ blocks, each specified
merger of $k$ of them has rate
\begin{equation}\label{eq:lambda-rates}
 \lambda^\Lambda_{b;k}
 =\int_{[0,1]}x^{k-2}(1-x)^{b-k}\,\Lambda(\dd x),
 \qquad 2\le k\le b.
\end{equation}
Write $P^\Lambda(t)=(P_j^\Lambda(t))_{j\ge1}$ for the ranked positive
asymptotic block frequencies and
$d_t=1-\sum_jP_j^\Lambda(t)$ for their missing mass.

Fix $q\in(0,1)\setminus\{1/2\}$, let $(B_j)$ be iid
Bernoulli$(q)$ variables independent of the coalescent, and define
\begin{equation}\label{eq:lambda-coloured-frequency}
 Y_{\Lambda,q}(t)
 :=q d_t+\sum_{j\ge1}P_j^\Lambda(t)B_j
 =q+\sum_{j\ge1}P_j^\Lambda(t)(B_j-q).
\end{equation}
Thus $Y_{\Lambda,q}(t)\in[0,1]$.  The first expression in
\eqref{eq:lambda-coloured-frequency} colors dust at its base frequency; the
second expresses the same observable through a centered mark.

\begin{theorem}\label{thm:lambda-bernoulli}
For every known $t>0$, the map
\[
 \cP([0,1])\longrightarrow\cP(\R),
 \qquad
 \Lambda\longmapsto\Law(Y_{\Lambda,q}(t)),
\]
is a topological embedding for weak convergence.  Under the normalization
$\Lambda([0,1])=1$, the same law determines an unknown positive time $t$.
For a nonzero finite unnormalized collision measure $\Gamma$, an observation
at unknown time determines exactly $t\Gamma$, and hence only the equivalence
class
\begin{equation}\label{eq:lambda-scale}
 (t,\Gamma)\sim(t/c,c\Gamma),\qquad c>0.
\end{equation}
\end{theorem}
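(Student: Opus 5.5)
The plan is to reduce $\Law(Y_{\Lambda,q}(t))$ to the moment sequence of a block-counting chain and then recover the Hausdorff moments $m_j=\int x^j\Lambda(\dd x)$ by induction on $j$.

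\emph{Reduction to block counts.} Since $Y_{\Lambda,q}(t)\in[0,1]$, its law is determined by $\E Y_{\Lambda,q}(t)^n$, $n\ge1$. Sample $n$ points from the paintbox with frequencies $P^\Lambda(t)$. A point in a block takes that block's color, and a dust point receives its own independent Bernoulli$(q)$ color; this is exactly the first expression in \eqref{eq:lambda-coloured-frequency}. Hence
\[
 \E Y_{\Lambda,q}(t)^n=\E\, q^{K_n(t)}=:u_n(t),
\]
where $K_n(t)$ is the number of blocks of $\Pi^\Lambda(t)$ restricted to $[n]$. Then $K_n$ is the block-counting chain: from $b$ blocks it jumps to $b-k+1$ at rate $g_{b,k}=\binom bk\lambda^\Lambda_{b;k}$, with the rates given by \eqref{eq:lambda-rates}.

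For $n=2$ this gives
\[
 u_2(t)=q^2+q(1-q)\bigl(1-e^{-t\Lambda([0,1])}\bigr).
\]
This recovers $t\Lambda([0,1])$, and hence $t$ under unit mass. Together with the time-rescaling identity $\Lambda$ at time $t$ equals $t\Lambda$ at time one, it also gives the equivalence class \eqref{eq:lambda-scale}.

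\emph{Induction.} The rates $\lambda^\Lambda_{b;k}$ with $b\le n$ depend only on $m_0,\dots,m_{n-2}$. Suppose $m_0,\dots,m_{n-3}$ are known. Then the functions $u_b(\cdot)$ for $b<n$ are known, and every $g_{n,k}$ is an affine function of the single unknown $m=m_{n-2}$. The coefficient of $m$ in $\lambda^\Lambda_{n;k}$ is $(-1)^{n-k}$. Variation of constants, as in \eqref{eq:xi-one-row-inversion}, gives
\[
 u_n(t)=e^{-R_nt}q^n+\int_0^te^{-R_ns}\sum_{k=2}^n g_{n,k}\,u_{n-k+1}(t-s)\dd s,
 \qquad R_n=\sum_k g_{n,k}.
\]
This is an explicit function $\Phi_n(m)$ on the admissible interval of $m$, which is bounded by the Hausdorff moment constraints. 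I would show that $\Phi_n$ is strictly monotone in $m$, so that $u_n(t)$ determines $m_{n-2}$. The Hausdorff moment problem on $[0,1]$ then determines $\Lambda$.

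\emph{Main obstacle: strict monotonicity.} Differentiating $\Phi_n$ in $m$ is legitimate, since the $u_b$ with $b<n$ do not depend on $m$. It produces
\[
 \int_0^te^{-R_ns}\,(\Delta f_s)(n)\dd s,\qquad
 (\Delta f)(n)=\sum_{k=2}^n\binom nk(-1)^{n-k}\bigl(f(n-k+1)-f(n)\bigr),
\]
with $f_s(b)=u_b(t-s)$, plus the term from $\partial R_n$ acting on the exponential. The operator $\Delta$ is essentially an $(n-1)$st finite difference of $b\mapsto u_b$. At $s=t$ the function is $f(b)=q^b$, whose differences have the sign of a power of $(q-1)$ times factors in which $q\ne1/2$ enters; for $q=1/2$ a symmetry forces cancellation.

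My first attempt would be a backward-equation comparison. I would show that the sign of the $j$th difference of $b\mapsto u_b(r)$ is preserved in $r$, since the backward equations are triangular with positive off-diagonal rates. This should yield a fixed, nonzero sign of the derivative for all $s$ whenever $q\ne1/2$. Failing that, I would expand in $e^{-R_bs}$ and argue on the leading term.

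\emph{Continuity.} Weak continuity of $\Lambda\mapsto\Law(Y_{\Lambda,q}(t))$ follows because each $u_n(t)$ depends continuously on finitely many moments. Since $\cP([0,1])$ is compact and the map is injective, it is an embedding.
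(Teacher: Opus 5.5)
Your reduction $\E[Y_{\Lambda,q}(t)^n]=\E[q^{K_n(t)}]$ matches the paper, and so does the rest of the frame: the induction on $\mu_{n-2}=\int x^{n-2}\Lambda(\dd x)$ with level-$n$ rates affine in it, the recovery of $t\Lambda([0,1])$ at $n=2$, and the compactness argument. The gap is the step you flag yourself. Nothing in the proposal shows that the derivative has a fixed strict sign, and the mechanisms you suggest cannot supply one. Your $(\Delta f_\tau)(n)$, with $f_\tau(b)=u_b(\tau)$, is the paper's forcing term, and two binomial expansions give
\[
 (\Delta f_\tau)(n)=\E\bigl[Y(1-Y)^n\bigr]+(-1)^n\E\bigl[Y^n(1-Y)\bigr],
 \qquad Y=Y_{\Lambda,q}(\tau).
\]
Your integral $\int_0^te^{-R_ns}(\Delta f_s)(n)\dd s$ is already the whole derivative, because $\Delta$ contains $-c_nf(n)$ from $\partial R_n$; a separate $\partial R_n$ term would double count.

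Now consider the sign. Finite-difference signs of $b\mapsto u_b(\tau)$ only encode complete monotonicity of the moments of a $[0,1]$-valued variable. That settles even $n$, where both terms are nonnegative. For odd $n$, however, the forcing is a difference of two nonnegative quantities. For deterministic $Y=y$ it has the sign of $1-2y$, which can be either. Positivity preservation of the triangular backward semigroup controls nonnegative functions, not a fixed signed functional of $(u_1,\dots,u_n)$. A leading-term expansion in $e^{-R_b\tau}$ only controls small or large times. What you need is the sign for every $\tau\in[0,t]$ and every $\Lambda$ with the given lower moments, since $u_n$ itself depends on $\mu_{n-2}$.

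The missing idea is to lift to $n+1$ labels. Color the ancestral blocks of the restriction to $[n+1]$ independently, with probability $q$ for color one. Then $\E[Y(1-Y)^n]$ is the probability that label $1$ has color one and labels $2,\dots,n+1$ have color zero. By exchangeability, $\E[Y^n(1-Y)]$ is the probability of the reversed pattern. Both patterns force label $1$ to be a singleton (event $S_1$). Conditionally on the ancestral partition, with $K$ blocks, they equal $\1_{S_1}q(1-q)^{K-1}$ and $\1_{S_1}(1-q)q^{K-1}$. For odd $n$ their difference is $\1_{S_1}q(1-q)\{(1-q)^{K-2}-q^{K-2}\}$, which has the sign of $1-2q$ pointwise and is zero when $K=2$. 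The no-merger event has probability $\exp\{-r^\Lambda_{n+1}\tau\}>0$ and $K=n+1$, so it makes the sign strict for both parities; this is exactly where $q\ne1/2$ enters. Variation of constants along $\Lambda_s=(1-s)\Lambda_0+s\Lambda_1$ then gives strict monotonicity of $u_n(t)$ in $\mu_{n-2}$, and the rest of your plan goes through.
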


\begin{proof}
Put $b_n^{\Lambda,q}(t)=\E[Y_{\Lambda,q}(t)^n]$.  Coloring the blocks of
the ancestral restriction independently gives the paintbox identity
\begin{equation}\label{eq:lambda-colour-duality}
 b_n^{\Lambda,q}(t)=\E[q^{N_n^\Lambda(t)}],
\end{equation}
where $N_n^\Lambda(t)$ is the number of blocks in the restriction to $[n]$.
Thus the output law gives every $b_n^{\Lambda,q}(t)$.

Write $\mu_m=\int x^m\Lambda(\dd x)$.  Suppose that
$\mu_0,\ldots,\mu_{n-3}$ have already been determined, and set
$z=\mu_{n-2}$.  The total rate from $n$ blocks to $j<n$ blocks is
\begin{equation}\label{eq:lambda-block-count-rate}
 r_{n,j}=\binom n{j-1}
 \int_{[0,1]}x^{n-j-1}(1-x)^{j-1}\Lambda(\dd x).
\end{equation}
It is affine in $z$, with coefficient
\begin{equation}\label{eq:lambda-new-moment-coefficients}
 c_{n,j}=\binom n{j-1}(-1)^{j-1},
 \qquad
 c_n:=\sum_{j<n}c_{n,j}=(-1)^n(n-1).
\end{equation}
Every generator for a restriction with fewer than $n$ labels depends only
on the moments already known.

Let $\Lambda_0$ and $\Lambda_1$ have the same moments through order $n-3$,
and write
\[
 \Lambda_s=(1-s)\Lambda_0+s\Lambda_1,
 \qquad z_i=\mu_{n-2}(\Lambda_i),
 \qquad 0\le s\le1.
\]
Abbreviate $b_j^s(t)=b_j^{\Lambda_s,q}(t)$.  For $j<n$, these functions do
not depend on $s$.  The finite backward equation is
\begin{equation}\label{eq:lambda-backward}
 \frac{\dd}{\dd t}b_n^s(t)
 =\sum_{j=1}^{n-1}r_{n,j}^{\Lambda_s}b_j^s(t)
  -r_n^{\Lambda_s}b_n^s(t),
 \qquad b_n^s(0)=q^n,
\end{equation}
where $r_n=\sum_{j<n}r_{n,j}$.  The finite generator is affine in $s$, so
its matrix exponential, and hence $b_n^s(t)$, is differentiable in $s$.
For $v_s(t)=\partial_s b_n^s(t)$, differentiation of
\eqref{eq:lambda-backward} gives
\begin{equation}\label{eq:lambda-variation}
 v_s'(t)=-r_n^{\Lambda_s}v_s(t)
 +(z_1-z_0)H_{n,q}^{\Lambda_s}(t),
 \qquad v_s(0)=0,
\end{equation}
where substituting \eqref{eq:lambda-new-moment-coefficients} and carrying out
two binomial expansions yields
\begin{equation}\label{eq:lambda-forcing-linear}
 H_{n,q}^{\Lambda}(t)
 =\sum_{j=1}^{n-1}c_{n,j}b_j^{\Lambda,q}(t)
  -c_n b_n^{\Lambda,q}(t),
\end{equation}
and hence
\begin{equation}\label{eq:lambda-forcing}
 H_{n,q}^{\Lambda}(t)=\E\left[
 Y_{\Lambda,q}(t)(1-Y_{\Lambda,q}(t))^n
 +(-1)^nY_{\Lambda,q}(t)^n(1-Y_{\Lambda,q}(t))\right].
\end{equation}

To determine the sign, color the ancestral blocks of the restriction to
$[n+1]$ independently with probability $q$ for color one.  The expectations
$\E[Y(1-Y)^n]$ and $\E[Y^n(1-Y)]$, with $Y=Y_{\Lambda,q}(t)$, are the
probabilities of the patterns in which label $1$ has color one and all
remaining labels have color zero, or vice versa.
For the second term, exchangeability allows the exceptional label to be
chosen as label $1$.  Let $K$ be the number of ancestral blocks and $S_1$
the event that label $1$ is a singleton.  Both patterns are impossible
outside $S_1$.  Conditional on the ancestral partition, their probabilities
are, respectively,
\[
 \1_{S_1}q(1-q)^{K-1},
 \qquad \1_{S_1}(1-q)q^{K-1}.
\]
For even $n$, their sum is nonnegative.  The no-merger event has probability
$\exp\{-r_{n+1}^{\Lambda}t\}>0$, lies in $S_1$, and has $K=n+1$;
the sum is therefore strictly positive in expectation.  For odd $n$, their
difference gives
\begin{equation}\label{eq:lambda-odd-forcing}
 H_{n,q}^{\Lambda}(t)=\E\left[
 \1_{S_1}q(1-q)
 \{(1-q)^{K-2}-q^{K-2}\}\right].
\end{equation}
On $S_1$ we have $K\ge2$; the bracket in
\eqref{eq:lambda-odd-forcing} is zero when $K=2$ and has the sign of
$1-2q$ when $K>2$.  The no-merger event therefore makes this sign strict
in expectation.

Variation of constants in \eqref{eq:lambda-variation} gives
\begin{equation}\label{eq:lambda-variation-solution}
 v_s(t)=(z_1-z_0)\int_0^t
 \exp\{-r_n^{\Lambda_s}(t-r)\}
 H_{n,q}^{\Lambda_s}(r)\dd r.
\end{equation}
For every fixed positive $t$, the observed $n$th moment is therefore strictly
monotone in the new Hausdorff moment $\mu_{n-2}$ while the lower moments are
fixed.  For odd $n$, the orientation reverses when $q$ crosses $1/2$.
Starting at $n=3$ recovers all moments of $\Lambda$; the Hausdorff moment
theorem then recovers $\Lambda$.

For continuity, each finite restriction has a finite-state generator whose
entries are integrals of polynomials against $\Lambda$.  Hence weak
convergence of $\Lambda$ gives convergence of every quantity in
\eqref{eq:lambda-colour-duality}.  Since the observations are supported on
$[0,1]$, convergence of all their moments is equivalent to weak convergence.
The domain $\cP([0,1])$ is compact, so the continuous injection is a
topological embedding.

Finally, two blocks merge at rate one under the probability normalization.
The two-label restriction gives
\begin{equation}\label{eq:lambda-time}
 \E[Y_{\Lambda,q}(t)^2]
 =q(1-e^{-t})+q^2e^{-t}
 =q-q(1-q)e^{-t},
\end{equation}
which recovers $t$.  For a finite nonzero measure $\Gamma$, put
$m=\Gamma([0,1])$ and $\Lambda=\Gamma/m$.  The process at time $t$ driven
by $\Gamma$ is the process at time $tm$ driven by $\Lambda$.  Formula
\eqref{eq:lambda-time} first recovers $tm$, and the fixed-time argument then
recovers $\Lambda$.  The observation therefore recovers
$tm\Lambda=t\Gamma$ exactly, but cannot select a representative of
\eqref{eq:lambda-scale}.
\end{proof}

The recovery proceeds through the Hausdorff moments of $\Lambda$.
The dependence of a contemporaneous sample likelihood on the first $n-2$
moments is recorded in Koskela et~al.~\cite[Lemma~4]{KoskelaJenkinsSpano2018}.
Other genetic observations retain different information:
Spence et~al.~\cite{SpenceKammSong2016} obtain identifiability results in restricted
models from expected site-frequency spectra, while
Mir{\'o} Pina et~al.~\cite{MiroPinaJolySiriJegousse2023} estimate a $\Lambda$ density from
subsampled weighted site-frequency spectra under regularity assumptions.
The theorem above uses the complete distribution of the type frequency at
one time, started from a known initial frequency.

\begin{remark}
For $q>1/2$, one may equivalently use the reflection
$Y_{\Lambda,q}=1-Y'_{\Lambda,1-q}$ formed from $B_j'=1-B_j$.  At $q=1/2$
the odd forcing in \eqref{eq:lambda-odd-forcing} vanishes; the argument gives
neither identification nor non-identification at that value.
\end{remark}

The same induction also identifies $\Lambda$ from its absorption
probabilities at one time.  M{\"o}hle~\cite[Lemma~6.3]{Mohle2008} proves
identification from the full marginal laws of the absorption times for all
sample sizes.  The proposition below uses only the probability of absorption
by one fixed time for every finite sample size.  This provides the link to
marks whose laws are not finitely supported.

\begin{proposition}[Identification from absorption probabilities]
\label{prop:lambda-absorption}
For every $t>0$, the map
\[
 \Lambda\longmapsto
 \bigl(\E V_n(P^\Lambda(t))\bigr)_{n\ge2}
\]
is a topological embedding of $\cP([0,1])$ into
$[0,1]^{\{2,3,\ldots\}}$ with its product topology.
\end{proposition}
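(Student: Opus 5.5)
The plan is to mirror the induction in the proof of Theorem~\ref{thm:lambda-bernoulli}, with the Bernoulli moment $b_n^{\Lambda,q}(t)$ replaced by the absorption probability
\[
 a_n^\Lambda(t):=\Pp\{N_n^\Lambda(t)=1\}=\E V_n(P^\Lambda(t)),
 \qquad n\ge2.
\]
This identity is the paintbox statement \eqref{eq:xi-paintbox-coarser} applied to the one-block partition of $[n]$: conditionally on the frequencies, all $n$ labels lie in one block with probability $V_n$, and dust contributes nothing because dust labels are singletons. Set $a_1\equiv1$. The backward equation for the block-counting chain started from $n$ blocks reads
\[
 \frac{\dd}{\dd t}a_n(t)=\sum_{j=1}^{n-1}r_{n,j}a_j(t)-r_na_n(t),\qquad a_n(0)=0 \quad (n\ge2),
\]
with $r_{n,j}$ as in \eqref{eq:lambda-block-count-rate}.

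\emph{Injectivity.} I argue by induction on $n\ge2$. Suppose $\mu_0,\ldots,\mu_{n-3}$ are known (for $n=2$ nothing is needed; $\mu_0=1$ is given, and \eqref{eq:lambda-time}-style reasoning also gives $a_2=1-e^{-t}$). Interpolate $\Lambda_s=(1-s)\Lambda_0+s\Lambda_1$ between two measures sharing these moments, exactly as before. The functions $a_j$ for $j<n$ do not depend on $s$. Differentiating the backward equation in $s$ gives \eqref{eq:lambda-variation}, with forcing
\[
 G_n^{\Lambda}(t)=\sum_{j=1}^{n-1}c_{n,j}a_j^{\Lambda}(t)-c_na_n^{\Lambda}(t),
\]
where the coefficients are those of \eqref{eq:lambda-new-moment-coefficients}. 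The whole step therefore reduces to showing that $G_n^\Lambda(t)$ has a strict constant sign for every $\Lambda$ and every $t>0$.

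\emph{Main obstacle: the sign of $G_n$.} I would imitate the coloring trick. Write $c_{n,j}=\binom n{j-1}(-1)^{j-1}$, and express $a_j$ as $\E V_j$ for $j\ge2$, $a_1=1=\E V_1^{\,0}$-type term handled through $|P|$ or the dust. Then $G_n$ becomes the expectation of an explicit symmetric function of the paintbox, roughly
\[
 \E\sum_{j}\binom n{j-1}(-1)^{j-1}V_j-\ldots .
\]
I would try to reorganize this binomial sum into a probability of a pattern in the restriction to $[n+1]$: for instance, the probability that label $1$ is a singleton while labels $2,\ldots,n+1$ form one block, possibly with a sign. If $G_n$ equals, up to a fixed sign, such a probability, then strict positivity follows because the path in which label $1$ stays a singleton and the other $n$ labels merge in a single $n$-merger has positive probability. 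The rate of an $n$-merger among $n+1$ blocks is $\int x^{n-2}(1-x)\Lambda(\dd x)$, which could vanish if $\Lambda=\delta_0$ or $\delta_1$, so I would instead use a path of successive binary mergers among labels $2,\ldots,n+1$, which has positive rate whenever the relevant pair rates are positive, or else treat these two degenerate cases directly. If the reorganization yields a sum of two patterns with opposite signs, I would compare them via exchangeability, as in \eqref{eq:lambda-odd-forcing}. With the sign settled, \eqref{eq:lambda-variation-solution} gives strict monotonicity of $a_n(t)$ in $\mu_{n-2}$. Hence $\mu_{n-2}$ is determined, and Hausdorff's moment theorem recovers $\Lambda$.

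\emph{Continuity and embedding.} Each $a_n$ is a finite matrix-exponential entry whose generator has entries that are integrals of polynomials against $\Lambda$, so the map is continuous into the product topology. Since $\cP([0,1])$ is compact and the target is Hausdorff, a continuous injection is a topological embedding.
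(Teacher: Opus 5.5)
Your reduction is the same as the paper's: the backward equation for $a_n$, the interpolation $\Lambda_s$, and variation of constants with forcing $G_n^\Lambda=\sum_{j<n}c_{n,j}a_j^\Lambda-c_na_n^\Lambda$ (the paper's $H_n^\Lambda$), followed by the Hausdorff moment theorem and compactness. The gap is the step you call the main obstacle. You do not prove that $G_n^\Lambda(u)$ has a strict sign for all $u\in[0,t]$, and the route you sketch would not work.

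\textbf{The missing computation.} Let $Z$ be the frequency of the block containing label $1$, with $Z=0$ if that label is dust. Then $a_j^\Lambda(u)=\E[Z^{j-1}]$ for every $j\ge1$. This includes $a_1=1$ through $Z^0=1$, which is the clean way to handle the $j=1$ term; note that this term is not $\E V_1$. With this, the binomial sum collapses:
\[
 G_n^\Lambda(u)=\E\bigl[(1-Z)^n+(-1)^nZ^{n-1}(1-Z)\bigr]
 =\Pp(S_1)+(-1)^n\Pp(S_1,\,K=2).
\]
Here $S_1$ is the event that label $1$ is a singleton in the restriction to $[n+1]$, and $K$ is the number of blocks of that restriction. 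The paper obtains the same identity by dividing the Bernoulli formulas by $q$ and letting $q\downarrow0$. So $G_n$ is not, up to sign, the probability of the pattern you guessed. That pattern, $\{S_1,K=2\}$, is only the correction term.

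\textbf{Why your positivity witness fails.} For odd $n$ the identity gives $G_n^\Lambda=\Pp(S_1)-\Pp(S_1,K=2)=\Pp(S_1,K>2)$. Your proposed path keeps label $1$ untouched while the other $n$ labels merge into one block. It lies in $\{S_1,K=2\}$, so its net contribution is exactly zero and it cannot give strictness. It also has probability zero for $\Lambda=\delta_1$, as you noticed.

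\textbf{The correct witness.} Use the no-merger event $\{K=n+1\}$. It has probability $e^{-r_{n+1}^\Lambda u}>0$ for every $\Lambda$, lies in $S_1$, and has $K>2$. It therefore gives $G_n^\Lambda(u)\ge e^{-r_{n+1}^\Lambda u}>0$ in both parities. No special treatment of degenerate $\Lambda$ and no exchangeability comparison is needed. With this lemma in place, the rest of your argument goes through as written.
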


\begin{proof}
Set
\[
 a_1^\Lambda(u)=1,\qquad
 a_n^\Lambda(u)=\Pp(N_n^\Lambda(u)=1)
              =\E V_n(P^\Lambda(u)),\quad n\ge2.
\]
The convention $a_1^\Lambda=1$ refers to absorption of a one-label
restriction; it is not the expected total atomic mass, which may be less
than one.  The backward equations for $a_n^\Lambda$ have the same rates as
\eqref{eq:lambda-backward}, with initial values $a_n^\Lambda(0)=0$ for
$n\ge2$.

Use $\Lambda_s$, $z_i$, $c_{n,j}$ and $c_n$ as in the proof of
Theorem~\ref{thm:lambda-bernoulli}.  At the induction step, all generators
with fewer than $n$ blocks have already been recovered.  They determine
$a_j^{\Lambda_s}(u)$ for every $u\ge0$ and $j<n$, independently of $s$;
no observations at additional times are needed.  Differentiating the
backward equation and integrating gives
\begin{equation}\label{eq:lambda-absorption-variation}
 \begin{aligned}
 \partial_s a_n^{\Lambda_s}(t)
 &=(z_1-z_0)\int_0^t
 e^{-r_n^{\Lambda_s}(t-u)}H_n^{\Lambda_s}(u)\dd u,\\
 H_n^\Lambda(u)
 &=\sum_{j<n}c_{n,j}a_j^\Lambda(u)-c_na_n^\Lambda(u).
 \end{aligned}
\end{equation}
To determine the sign, let $S_1$ be the event that label $1$ is a singleton
in the restriction to $[n+1]$ at time $u$, and let $K$ be its number of
blocks.  Dividing \eqref{eq:lambda-colour-duality} and
\eqref{eq:lambda-forcing-linear} by $q$ and letting $q\downarrow0$ gives
\begin{equation}\label{eq:lambda-absorption-forcing}
 H_n^\Lambda(u)
 =\Pp(S_1)+(-1)^n\Pp(S_1,\,K=2).
\end{equation}
Indeed, $b_j^{\Lambda,q}(u)/q=\E[q^{N_j^\Lambda(u)-1}]\to a_j^\Lambda(u)$,
including $j=1$.  For the two color patterns used in the Bernoulli proof,
the respective limits after division by $q$ are
$\Pp(S_1)$ and $\Pp(S_1,K=2)$.
For odd $n$, the right-hand side of
\eqref{eq:lambda-absorption-forcing} is $\Pp(S_1,K>2)$; for even $n$
it is a sum of nonnegative probabilities.  In either case, for $n\ge3$,
\[
 H_n^\Lambda(u)\ge\Pp(K=n+1)=e^{-r_{n+1}^\Lambda u}>0.
\]
Thus $a_n^\Lambda(t)$ is strictly increasing in the new moment
$\mu_{n-2}$ when the lower moments are fixed.  Since $\mu_0=1$,
induction from $n=3$ recovers every moment of $\Lambda$, and the Hausdorff
moment theorem gives injectivity.  The finite generators depend continuously
on $\Lambda$, so each coordinate $a_n^\Lambda(t)$ is continuous.
Compactness of $\cP([0,1])$ proves the embedding assertion.
\end{proof}

\Needspace{8\baselineskip}
The absorption profile also has an interpretation in terms of a single
block.  Let $Z_t^\Lambda$ be the asymptotic frequency of the block containing
label $1$, with value zero when that label belongs to the dust.  By the
paintbox representation,
\[
 \Law(Z_t^\Lambda\mid P^\Lambda(t)=p)
 =\left(1-\sum_{j\ge1}p_j\right)\delta_0
   +\sum_{j\ge1}p_j\delta_{p_j}.
\]
Consequently,
\[
 \E[(Z_t^\Lambda)^{n-1}]=\E V_n(P^\Lambda(t)),\qquad n\ge2.
\]
Probability laws on $[0,1]$ are determined by their moments.  Thus
Proposition~\ref{prop:lambda-absorption} also identifies $\Lambda$ from the
law of $Z_t^\Lambda$ at any known positive time.

\Needspace{13\baselineskip}
For an integrable real mark $X$, write
\[
 a_X=\sup\{r>0:\E|X|^r<\infty\}
\]
for its moment abscissa.  The following result uses the power-sum convergence
principle of \cite[Theorem~2.2]{LenziBreiman} to pass from the marked law to the absorption
profile.  The
range $2\le a_X<4$ includes finite-variance marks: a fourth-order bound on
the nonlinear characteristic-function remainder extends the moment argument
to that range.

\begin{corollary}[Marks with a finite boundary moment]
\label{cor:lambda-boundary-mark}
Let $X$ be centered and nonconstant, with
\[
 1<a_X<4,\qquad \E|X|^{a_X}<\infty.
\]
For every known $t>0$, the map
\[
 \cP([0,1])\longrightarrow\cP(\R),\qquad
 \Lambda\longmapsto
 \Law\left(\sum_{j\ge1}P_j^\Lambda(t)X_j\right)
\]
is a topological embedding for the weak topologies, where the $X_j$ are
iid copies of $X$, independent of the coalescent.
Under the normalization $\Lambda([0,1])=1$, this law also determines an
unknown positive time $t$ together with $\Lambda$.
\end{corollary}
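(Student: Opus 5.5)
The plan is to reduce the statement to Proposition~\ref{prop:lambda-absorption}. The marked law should determine the expected power-sum profile $(\E V_n(P^\Lambda(t)))_{n\ge2}$, or at least enough of it (for instance its tail) to run the moment induction. The power-sum convergence principle of \cite[Theorem~2.2]{LenziBreiman} is the tool for this extraction.

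Write $\phi_X(u)=1-\psi(u)$ near the origin. Conditionally on $P=p$ the marked sum has characteristic function $\prod_j(1-\psi(up_j))$. Its logarithm is $-\sum_j\psi(up_j)$ plus a remainder quadratic in $\psi$. Since $X$ is centered with moment abscissa $a_X\in(1,4)$ and $\E|X|^{a_X}<\infty$, the small-$u$ behaviour of $\psi$ carries a term of exact order $|u|^{a_X}$, up to slowly varying corrections; for $a_X\ge2$ this term sits after the quadratic term $\tfrac12\sigma^2u^2$. I would use the cited principle to pass from this asymptotic to an identification of $\E V_{a_X}(P)$, and more generally of the law-level quantities $\E\,V_{a}(P)$ and their products arising from the expansion. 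Concretely, averaging over $\mu=\Law(P^\Lambda(t))$ and extracting the leading singular coefficient as $u\to0$ recovers $\E V_2(P)$ when $a_X\ge2$. In all cases it recovers $\E V_{a_X}(P)$. Scaling $u$ and using that $V_2$ is bounded then gives mixed moments.

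The key step, and what I expect to be the main obstacle, is going from these finitely many singular coefficients to the full profile $\E V_n$, $n\ge2$. The restriction $a_X<4$ matters here: the nonlinear remainder of the product is $O(\sum_j|\psi(up_j)|^2)$, which is bounded by $|u|^4V_2(p)^2$-type terms. This remainder is of order strictly smaller than the $|u|^{a_X}$ singularity, so the fractional term is not polluted.

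I would first try to show directly that $\Law(\sum_jP_jX_j)$ determines $\E V_{a_X}(P^\Lambda(t))$ and $\E V_2(P^\Lambda(t))$, and then run a variant of the induction of Proposition~\ref{prop:lambda-absorption} adapted to these two functionals. If that does not suffice, the fallback is to invoke \cite[Theorem~2.2]{LenziBreiman} as giving all $\E V_n$. The identification of $\Lambda$ then follows from the embedding in Proposition~\ref{prop:lambda-absorption}.

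Continuity follows as in Lemma~\ref{lem:kernel-continuity}, since $X$ is centered in $L^1$. The composition $\Lambda\mapsto\Law(P^\Lambda(t))\mapsto T_X(\cdot)$ is continuous because the finite generators depend continuously on $\Lambda$ and the paintbox argument of Theorem~\ref{thm:xi-fixed-time} applies. Injectivity together with compactness of $\cP([0,1])$ gives the embedding.

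For unknown time under $\Lambda([0,1])=1$, I would use the identity $1-\E V_2(P^\Lambda(t))=e^{-t}$ from \eqref{eq:xi-pair-identity}. Provided $\E V_2$ is among the recovered quantities, this recovers $t$; the fixed-time result then gives $\Lambda$.
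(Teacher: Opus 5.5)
Your primary analytic route fails, for two reasons. First, under the hypothesis $\E|X|^{a_X}<\infty$, dominated convergence forces the non-polynomial part of $1-\phi_X(u)$ to be $o(|u|^{a_X})$; for instance $\E[1-\cos(uX)]=o(|u|^{a_X})$ when $a_X<2$, and $\E[1-\cos(uX)-u^2X^2/2]=o(|u|^{a_X})$ when $2<a_X<4$. So there is no term of exact order $|u|^{a_X}$. Without a regular-variation assumption on the tails of $X$, there is also no well-defined leading singular coefficient to extract.

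Second, even granting that step, the two functionals you target cannot identify $\Lambda$. On $\cP([0,1])$ one has $\E V_2(P^\Lambda(t))=1-e^{-t}$, which does not depend on $\Lambda$ at all. One further continuous scalar $\E V_{a_X}(P^\Lambda(t))$ cannot be injective on $\cP([0,1])$, which contains two-dimensional simplices. The induction of Proposition~\ref{prop:lambda-absorption} uses $\E V_n(P^\Lambda(t))$ for every integer $n\ge3$, one new Hausdorff moment per step. No variant adapted to two functionals exists.

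Your fallback is the paper's route, but it omits the two steps that make it work. The result \cite[Theorem~2.2]{LenziBreiman} is a convergence principle, not a determination statement. Along a sequence of random weights whose marked laws converge to a nondegenerate limit, it gives convergence of the expected power sums of every order greater than one.

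The paper first checks nondegeneracy. Since $\E V_2(P^\Lambda(t))=1-e^{-t}>0$, the partition is nonzero with positive probability. Conditionally on a nonzero $p$, the term $p_1X_1$ is nonconstant and independent of the remaining summands, so the marked sum is not a.s.\ constant.

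It then takes $(t_0,\Lambda_0)$ and $(t_1,\Lambda_1)$ with the same marked law and alternates $\Law(P^{\Lambda_0}(t_0))$ and $\Law(P^{\Lambda_1}(t_1))$ along a sequence. The output sequence is constant and nondegenerate, so the two alternating values of each $\E V_s$ must coincide. Order two gives $t_0=t_1$, and Proposition~\ref{prop:lambda-absorption} gives $\Lambda_0=\Lambda_1$. This settles the known-time and unknown-time claims together.

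With these additions, your continuity and compactness argument completes the proof as in the paper.
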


\begin{proof}
Suppose that $(t_0,\Lambda_0)$ and $(t_1,\Lambda_1)$, with $t_0,t_1>0$,
give the same marked-sum law.
This law is nondegenerate.  Indeed,
$\E V_2(P^\Lambda(t))=1-e^{-t}>0$ implies that the mass partition is
nonzero with positive probability.  Conditionally on any nonzero partition,
a positive multiple of $X_1$ is independent of the remaining summands,
so their sum cannot have a constant law.

Alternate the two laws $\Law(P^{\Lambda_0}(t_0))$ and
$\Law(P^{\Lambda_1}(t_1))$ along a sequence of random weights.  The associated
marked laws form a constant, nondegenerate sequence.  By
\cite[Theorem~2.2]{LenziBreiman}, all expected power sums of order greater
than one converge along this sequence.  Their two alternating values must
therefore agree.  At order two this yields
\[
 1-e^{-t_0}=\E V_2(P^{\Lambda_0}(t_0))
          =\E V_2(P^{\Lambda_1}(t_1))=1-e^{-t_1}.
\]
Hence $t_0=t_1$, and Proposition~\ref{prop:lambda-absorption} now gives
$\Lambda_0=\Lambda_1$.

For fixed $t$, continuity follows from Theorem~\ref{thm:xi-fixed-time}
restricted to the $\Lambda$ subclass and Lemma~\ref{lem:kernel-continuity}.
The domain $\cP([0,1])$ is compact, so the injection is a topological
embedding.
\end{proof}

\section{A finite-type \texorpdfstring{$\Xi$}{Xi}-Fleming--Viot obstruction}
\label{sec:xifv}

For $d\ge2$, let
\[
 \Sigma_d=\left\{y\in[0,1]^d:\sum_{k=1}^dy_k=1\right\}
\]
be the simplex of type frequencies.
Proposition~\ref{prop:intro-finite} gives two different distributions of
family sizes that cannot be distinguished by any mark with at most $d$
values.  We use this pair to construct two collision measures with the same
neutral $d$-type Fleming--Viot transition semigroup.

\begin{theorem}
\label{thm:xifv-obstruction}
For every \(d\geq2\) and \(\rho\in(0,1]\), there are distinct probability
collision measures \(\Xi_{d,\rho,+}\) and \(\Xi_{d,\rho,-}\), supported on
the fixed-total-mass slice $\mathcal F_{d+1,\rho}^{\circ}$ in
\eqref{eq:strict-face},
whose neutral \(d\)-type \(\Xi\)-Fleming--Viot
transition semigroups without mutation or selection agree:
\[
 \mathsf S_t^{\Xi_{d,\rho,+}}f(y)=\mathsf S_t^{\Xi_{d,\rho,-}}f(y)
\]
for every \(t\geq0\), \(y\in\Sigma_d\), and \(f\in C(\Sigma_d)\).
\end{theorem}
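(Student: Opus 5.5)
The plan follows the hint after Proposition~\ref{prop:intro-finite}: tilt the pair $H_{d,\rho,\pm}$ by $V_2$. Put
\[
 \Xi_{d,\rho,\pm}(\dd x)=\frac{V_2(x)\,H_{d,\rho,\pm}(\dd x)}{Z_\pm},
 \qquad Z_\pm=\int V_2\,\dd H_{d,\rho,\pm}.
\]
First I will arrange $Z_+=Z_-$. This is needed so that both normalized measures are obtained from the signed difference by the same constant. Since $V_2\circ\iota_\rho$ is a polynomial in the chart coordinates, it is a linear combination of the projections $y\mapsto a\cdot\iota_\rho(y)$ raised to the second power. In fact $V_2(p)=\E_a[(a\cdot p)^2]$ for $a$ a vector of iid Rademacher marks, which is a two-point mark, and $d\ge2$. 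Proposition~\ref{prop:intro-finite} equates the laws of $a\cdot p$ for every fixed $a$ with at most $d$ distinct values, so the equality \eqref{eq:projection-identity} gives $\int (a\cdot p)^2\,\dd(H_+-H_-)=0$. Hence $Z_+=Z_-=:Z>0$, and $V_2>0$ on the slice. The tilted measures are probability measures supported on $\mathcal F^\circ_{d+1,\rho}$, away from the origin. They are distinct because their difference has density $\varepsilon V_2 h/Z$ with $V_2>0$ and $h\ne0$.

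Next I will write the generator of the mutation-free neutral $d$-type $\Xi$-Fleming--Viot process with no Kingman part, acting on $f\in C^2(\Sigma_d)$ (or on polynomials):
\[
 \mathcal A^\Xi f(y)=\int_{\K}\Bigl(\E\bigl[f\bigl((1-|x|)y+\textstyle\sum_j x_j e_{\kappa_j}\bigr)\bigr]-f(y)\Bigr)\frac{\Xi(\dd x)}{V_2(x)},
\]
where the $\kappa_j$ are iid types with law $y$ and $e_k$ are the unit vectors. This is the lookdown/de Finetti form used in Corollary~\ref{cor:scalar-xifv}, and I will cite \cite{BirknerBlathMohleSteinruckenTams2009} for it. The factor $V_2$ cancels, so
\[
 \mathcal A^{\Xi_\pm}f(y)=Z^{-1}\int\bigl(\E f(\cdots)-f(y)\bigr)H_\pm(\dd x).
\]
For fixed $y$, the jump target is $(1-\rho)y+\sum_{i\le d+1}x_i e_{\kappa_i}$ on the slice. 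Its coordinate $k$ equals $(1-\rho)y_k+\sum_i x_i\1\{\kappa_i=k\}$, which is the marked sum with vector mark $e_{\kappa_i}$. A general linear functional $\sum_k\xi_k(\cdot)_k$ of it is $\sum_i x_i\xi_{\kappa_i}$, a marked sum with a real mark taking at most $d$ values. Proposition~\ref{prop:intro-finite}, applied to every such mark, shows that the joint characteristic function of the jump target agrees under $H_+$ and $H_-$. Hence the jump target has the same law under both. Because $H_\pm$ have equal total mass, the $-f(y)$ terms also agree. Therefore $\mathcal A^{\Xi_+}=\mathcal A^{\Xi_-}$ on $C(\Sigma_d)$. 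This operator is in fact bounded, since the jump rate $\Xi(\K\setminus\{0\})/V_2$ is bounded on the support, where $V_2$ is bounded below.

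Finally, the process is a pure-jump Markov process with bounded jump kernel on compact $\Sigma_d$. Its semigroup is $\mathsf S_t=e^{t\mathcal A}$, given by the norm-convergent exponential series on $C(\Sigma_d)$. Equal bounded generators then give equal semigroups for every $t$, $y$ and $f$. The step I expect to be the main obstacle is justifying that the cited $d$-type $\Xi$-Fleming--Viot semigroup coincides with this bounded-generator semigroup. This requires matching normalization conventions (Schweinsberg's $\Xi_0/V_2$, no Kingman atom since $H_\pm$ avoids $0$) and uniqueness of the martingale problem. I would handle uniqueness via the dual $\Xi$-coalescent moment duality, or directly via boundedness of the generator. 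A secondary check is the equality $Z_+=Z_-$, which I argued from the Rademacher representation of $V_2$.
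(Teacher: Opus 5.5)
Your proposal is correct and follows essentially the same route as the paper. You tilt $H_{d,\rho,\pm}$ by $V_2$, obtain the common normalizer from Rademacher second moments, and cancel $V_2$ in the Birkner et al.\ generator. You then equalize the jump kernel by Cram\'er--Wold with marks taking at most $d$ values, and conclude from equality of the bounded generators $\bigl(J-I\bigr)/c_{d,\rho}$.

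The differences are minor. The paper centers the marks as $\theta_{K_i}-\theta\cdot y$, whereas you use uncentered marks $\xi_{\kappa_i}$. That is legitimate because the proposition covers uncentered marks and the shift $(1-\rho)\xi\cdot y$ is deterministic on the fixed-mass slice. The paper also cites Birkner et al.\ for the polynomial core, which, as you suggest, is automatic here because the generator is bounded and polynomials are dense in $C(\Sigma_d)$.
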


\begin{proof}
Choose $H_{d,\rho,+}$ and $H_{d,\rho,-}$ as in
Proposition~\ref{prop:intro-finite}, supported on
$\mathcal F_{d+1,\rho}^{\circ}$.
Let \((R_i)\) be centered Rademacher marks.  Conditional on a mass
partition \(p\),
\[
 \mathbb E\left[\left(\sum_ip_iR_i\right)^2\middle|p\right]=V_2(p).
\]
Equality of the marked-sum laws in
Proposition~\ref{prop:intro-finite} therefore gives
\begin{equation}
 c_{d,\rho}:=\int V_2\,\dd H_{d,\rho,+}
 =\int V_2\,\dd H_{d,\rho,-}>0.
 \label{eq:xifv-common-normalizer}
\end{equation}
Indeed, on this slice
\(\rho^2/(d+1)\leq V_2(p)\leq\rho^2\).  Define
\begin{equation}
 \Xi_{d,\rho,\pm}(dp)=\frac{V_2(p)}{c_{d,\rho}}H_{d,\rho,\pm}(dp).
 \label{eq:xifv-tilt}
\end{equation}
These are probability measures.  They are distinct: equality would imply
\(H_{d,\rho,+}=H_{d,\rho,-}\) after multiplying the common measure by
\(c_{d,\rho}/V_2\), which is bounded on this slice.  They also satisfy
\begin{equation}
 \frac{\Xi_{d,\rho,\pm}(dp)}{V_2(p)}=
 \frac{H_{d,\rho,\pm}(dp)}{c_{d,\rho}}.
 \label{eq:xifv-effective-measure}
\end{equation}
In particular,
\[
 \int_{\K\setminus\{0\}}\frac{\Xi_{d,\rho,\pm}(dp)}{V_2(p)}
 =\frac1{c_{d,\rho}}<\infty.
\]
There is no atom at the origin and hence no Kingman component.

For \(y\in\Sigma_d\), draw iid parent types \(K_i\) with
\(\mathbb P_y(K_i=k)=y_k\), and let $e_k$ be the $k$th standard basis vector
of $\R^d$.  A reproduction event with family sizes \(x\)
sends the state to
\begin{equation}
 \Phi(x,K,y)=(1-|x|)y+\sum_i x_i e_{K_i}.
 \label{eq:xifv-event-map}
\end{equation}
For \(\theta\in\mathbb R^d\),
\begin{equation}
 \theta\cdot\Phi(x,K,y)
 =\theta\cdot y+\sum_i x_i
 \bigl(\theta_{K_i}-\theta\cdot y\bigr).
 \label{eq:xifv-centred-bridge}
\end{equation}
The marks in parentheses are iid, centered under the current state \(y\),
and have support of cardinality at most \(d\).
Proposition~\ref{prop:intro-finite}, applied separately for each
\((\theta,y)\), and the Cram\'er--Wold theorem
show that the full post-event distributions in \eqref{eq:xifv-event-map}
agree after mixing under \(H_{d,\rho,+}\) and \(H_{d,\rho,-}\).  Define
\begin{equation}
 J_\pm f(y)=\int H_{d,\rho,\pm}(dx)\,\mathbb E_y[f(\Phi(x,K,y))],
 \qquad J_+=J_-=:J.
 \label{eq:xifv-common-kernel}
\end{equation}
On the polynomial core, the \(\Xi\)-Fleming--Viot generator formula
\cite[Proposition~1.3 and equation~(1.12)]{BirknerBlathMohleSteinruckenTams2009}
and \eqref{eq:xifv-effective-measure} give
\begin{equation}
 A_\pm f(y)=\frac{1}{c_{d,\rho}}\{Jf(y)-f(y)\}.
 \label{eq:xifv-generator}
\end{equation}
The map \(J\) preserves continuity: only \(d+1\) family coordinates are
nonzero, so the parent expectation is a finite sum with probabilities
polynomial in \(y\).  It is a Markov contraction, and
\(\|A_\pm\|\leq2/c_{d,\rho}\).  Hence the displayed core operator extends
to the same bounded operator on \(C(\Sigma_d)\); the polynomial class is a
core by Birkner et~al.~\cite[Proposition~4.3]{BirknerBlathMohleSteinruckenTams2009}.  In the present
finite-intensity setting, both processes therefore have the same uniformly
continuous transition semigroup,
\[
 \mathsf S_t=e^{tA}=e^{-t/c_{d,\rho}}
 \sum_{m\geq0}\frac{(t/c_{d,\rho})^m}{m!}J^m.
\]
\end{proof}

For comparison, Der and Plotkin~\cite{DerPlotkin2014} showed that, under unit-mass
normalization and positive mutation rates, a stationary two-allele
$\Lambda$-Fleming--Viot distribution need not determine the drift measure,
even though it determines the mutation rates.  The theorem above gives
equality of the entire mutation-free transition semigroup for distinct
probability collision measures.

\begin{remark}
For $d=2$, the block-counting chains associated with the two measures in
Theorem~\ref{thm:xifv-obstruction} have the same transition semigroup.
Write $\Xi_\pm=\Xi_{2,\rho,\pm}$, let $Y_t$ be the first-type frequency
started from $q\in[0,1]$, and let $N_n(t)$ be the number of blocks in
the coalescent restriction to $[n]$ started from singletons.
Moment duality
\cite[Section~5.3]{BirknerBlathMohleSteinruckenTams2009} gives
\[
 \E_q^{\Xi_\pm}[Y_t^n]
 =\E^{\Xi_\pm}[q^{N_n(t)}]
 =\sum_{k=1}^n\Pp^{\Xi_\pm}(N_n(t)=k)q^k.
\]
Theorem~\ref{thm:xifv-obstruction} makes the left-hand sides equal for
every $q\in[0,1]$, $n\ge1$, and $t\ge0$.  Equality of polynomial
coefficients therefore gives all transition probabilities of the
block-counting chains.  Since these are time-homogeneous Markov chains,
their path laws agree from every finite initial number of blocks.
In particular, their times to the most recent common ancestor have the
same laws for every finite sample size.  Their full ranked-frequency laws
differ at every positive time by Theorem~\ref{thm:xi-fixed-time}.
M{\"o}hle~\cite[Section~7]{Mohle2008} constructs distinct $\Xi$-coalescents with
the same total collision rates; the pair above has the same entire
block-counting transition semigroup.
\end{remark}

\appendix
\section{Determining compound Poisson marks with prescribed moments}
\label{app:compound-poisson}

The stable mark in Section~\ref{sec:stable-mark} has a fixed moment boundary.  We
prove Theorem~\ref{thm:compound-poisson}, showing that the boundary can be
placed above any prescribed finite order.
The resulting characteristic exponent contains an analytic remainder.  Its
even integer powers must be separated from the fractional powers before the
argument of Lemma~\ref{lem:profile-extraction} can be reused.

The mark will in fact be chosen with some $a>q$ such that
\begin{equation}\label{eq:CP-moment-boundary}
 \E|X^{(q)}|^a<\infty,
 \qquad
 \E|X^{(q)}|^s=\infty\quad(s>a).
\end{equation}

\subsection{The mark and its characteristic exponent}

Choose an integer $M\ge1$ such that $q<2M+2$, a rational number $a$ and a
positive rational number $\varepsilon$ satisfying
\begin{equation}\label{eq:CP-parameters}
 \max\{q,2M\}<a<2M+2,
 \qquad 0<\varepsilon<2M+2-a.
\end{equation}
Put
\begin{equation}\label{eq:CP-alpha-d}
 \alpha_r=a+\varepsilon e^{-r},
 \qquad d_r=(\alpha_r-a)2^{-r},
 \qquad r\ge1.
\end{equation}
Then $2M<a<\alpha_r<2M+2$, $\alpha_r\downarrow a$, and
\begin{equation}\label{eq:CP-boundary-sum}
 \sum_{r\ge1}\frac{d_r}{\alpha_r-a}=\sum_{r\ge1}2^{-r}<\infty.
\end{equation}
The set $\{2,\alpha_1,\alpha_2,\ldots\}$ is linearly independent over
$\mathbb Q$.  Indeed, a finite relation
$2q_0+\sum_{r=1}^Rq_r\alpha_r=0$ becomes
\[
 2q_0+a\sum_{r=1}^Rq_r+
 \varepsilon\sum_{r=1}^Rq_rz^r=0,
 \qquad z=e^{-1}.
\]
Transcendence of $z$ first gives $q_r=0$ for $r\ge1$ and then $q_0=0$.

Consider the symmetric measure
\begin{equation}\label{eq:CP-Levy-measure}
 \Pi(\dd x)=\sum_{r\ge1}d_r|x|^{-1-\alpha_r}
              \1_{\{|x|>1\}}\dd x.
\end{equation}
It is finite, since
\begin{equation}\label{eq:CP-intensity}
 \Pi(\R)=2\sum_{r\ge1}\frac{d_r}{\alpha_r}<\infty.
\end{equation}
Let $N$ have the Poisson distribution with mean $\Pi(\R)$, let $(J_i)$ be
iid with law
$\Pi/\Pi(\R)$, independently of $N$, and set
\begin{equation}\label{eq:CP-X}
 X^{(q)}=\sum_{i=1}^N J_i.
\end{equation}
This is a nonconstant symmetric compound Poisson random variable.  Moreover,
\begin{equation}\label{eq:CP-a-moment-Levy}
 \int_\R|x|^a\Pi(\dd x)
 =2\sum_{r\ge1}\frac{d_r}{\alpha_r-a}<\infty.
\end{equation}
Conditionally on $N=n$,
\[
 \left|\sum_{i=1}^nJ_i\right|^a
 \le n^{a-1}\sum_{i=1}^n|J_i|^a.
\]
All moments of $N$ are finite, so \eqref{eq:CP-a-moment-Levy} implies
$X^{(q)}\in L^a\subset L^q$.  Symmetry and $a>1$ give
$\E X^{(q)}=0$.  If $s>a$, choose $r$ with $\alpha_r<s$.  Then the jump law
has infinite $s$th moment, and on the event $\{N=1\}$ the compound Poisson
sum consists of a single jump.  Hence $\E|X^{(q)}|^s=\infty$.  This proves the moment
claims of the theorem.

We need an exact decomposition of
\begin{equation}\label{eq:h-alpha}
 h_\alpha(u)=\int_1^\infty(1-\cos ux)x^{-1-\alpha}\dd x,
 \qquad 2M<\alpha<2M+2.
\end{equation}
Define
\begin{align}
 P_M(y)&=\sum_{k=1}^M\frac{(-1)^{k+1}y^{2k}}{(2k)!},\notag\\
 R_M(y)&=1-\cos y-P_M(y),\notag\\
 C_\alpha&=\int_0^\infty R_M(y)y^{-1-\alpha}\dd y.
 \label{eq:C-alpha-def}
\end{align}
The last integral is absolutely convergent: $R_M(y)=O(y^{2M+2})$ at zero
and is a polynomial of degree at most $2M$ plus a bounded function at
infinity.

\begin{lemma}[Fractional term and entire remainder]
\label{lem:fractional-entire}
For $2M<\alpha<2M+2$,
\begin{equation}\label{eq:h-decomposition}
 h_\alpha(u)=C_\alpha|u|^\alpha+A_\alpha(u),
\end{equation}
where
\begin{align}
 C_\alpha
 &=\frac{\pi}{2\Gamma(1+\alpha)\sin(\pi\alpha/2)}\ne0,
 \label{eq:C-alpha-exact}\\
 A_\alpha(u)
 &=\sum_{\ell\ge1}
 \frac{(-1)^{\ell+1}u^{2\ell}}
 {(2\ell)!(\alpha-2\ell)}.
 \label{eq:A-alpha-series}
\end{align}
The function $A_\alpha$ is even and entire.  On each compact subinterval of
$(2M,2M+2)$, the family $(A_\alpha)$ is locally uniformly bounded on
$\mathbb C$.  The sign of $C_\alpha$ is $(-1)^M$.
\end{lemma}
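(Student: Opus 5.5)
Fix $u>0$; since $h_\alpha$ is even, the case $u<0$ then follows. Substituting $y=ux$ in \eqref{eq:h-alpha} gives
\[
 h_\alpha(u)=u^\alpha\int_u^\infty(1-\cos y)y^{-1-\alpha}\dd y .
\]
I split $1-\cos y=P_M(y)+R_M(y)$ and write the tail integral as $\int_0^\infty-\int_0^u$. This split is valid for the $R_M$ part, since $C_\alpha$ converges absolutely. It is not valid for $P_M$, whose terms $y^{2k-1-\alpha}$ are not integrable at zero. So the $P_M$ part I integrate directly on $[u,\infty)$: because $2k<\alpha$, each term converges at infinity, and
\[
 \int_u^\infty y^{2k-1-\alpha}\dd y=\frac{u^{2k-\alpha}}{\alpha-2k}.
\]
For the $R_M$ part I use $R_M(y)=\sum_{\ell>M}(-1)^{\ell+1}y^{2\ell}/(2\ell)!$. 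Termwise integration on $[0,u]$ is justified by absolute convergence, and it gives
\[
 -\int_0^uR_My^{-1-\alpha}\dd y=\sum_{\ell>M}\frac{(-1)^{\ell+1}u^{2\ell-\alpha}}{(2\ell)!(\alpha-2\ell)}.
\]
The sign works out because $2\ell-\alpha>0$ in this range. Multiplying by $u^\alpha$, the two pieces combine exactly into $C_\alpha u^\alpha+\sum_{\ell\ge1}(-1)^{\ell+1}u^{2\ell}/((2\ell)!(\alpha-2\ell))$. This is \eqref{eq:h-decomposition}, with $C_\alpha$ as defined in \eqref{eq:C-alpha-def}.

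Next come the analytic claims on $A_\alpha$. For $\alpha$ in a compact $I\subset(2M,2M+2)$, the quantity $|\alpha-2\ell|$ is bounded below by $\delta_I:=\operatorname{dist}(I,2\mathbb Z)>0$ uniformly in $\ell$. Hence
\[
 |A_\alpha(z)|\le\delta_I^{-1}\sum_\ell|z|^{2\ell}/(2\ell)!\le\delta_I^{-1}\cosh|z| .
\]
This shows at once that $A_\alpha$ is entire and even, and that the family is locally uniformly bounded on $\mathbb C$.

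The main step is the closed form \eqref{eq:C-alpha-exact}. I would use analytic continuation in $\alpha$. For $0<\alpha<2$ (the case $M=0$) the classical formula is
\[
 \int_0^\infty(1-\cos y)y^{-1-\alpha}\dd y=\frac{\pi}{2\Gamma(1+\alpha)\sin(\pi\alpha/2)},
\]
obtained for instance from $\int_0^\infty y^{s-1}\cos y\,\dd y=\Gamma(s)\cos(\pi s/2)$ after one integration by parts. The right-hand side is meromorphic in $\alpha$, with poles only at even integers. The left-hand side, with the Taylor subtraction $R_M$, defines the standard analytic continuation of the Mellin transform of $1-\cos y$ into the strip $2M<\Re\alpha<2M+2$. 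To confirm that this regularisation is the continuation, I would differentiate twice: $\partial_y^2R_M(y)=-R_{M-1}(y)$ with $R_0=1-\cos$. Integrating by parts twice, the boundary terms vanish in the strip, and this yields the recursion
\[
 C_\alpha=-\frac{C_{\alpha-2}}{\alpha(\alpha-1)} .
\]
That recursion is consistent with the Gamma/sine formula, since $\Gamma(1+\alpha)=\alpha(\alpha-1)\Gamma(\alpha-1)$ and $\sin(\pi\alpha/2)=-\sin(\pi(\alpha-2)/2)$. Induction on $M$ then gives \eqref{eq:C-alpha-exact}, and I expect this recursion to be the only delicate point.

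Finally, the sign. On $(2M,2M+2)$ the sign of $\sin(\pi\alpha/2)$ is $(-1)^M$, and $\Gamma(1+\alpha)>0$. So $C_\alpha\ne0$ and its sign is $(-1)^M$.
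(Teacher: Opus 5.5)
Your proof is correct and follows essentially the paper's route: iterating the recursion $C_\alpha=-C_{\alpha-2}/(\alpha(\alpha-1))$ down to $\beta=\alpha-2M\in(0,2)$ is exactly the paper's $2M$ integrations by parts, your split at $u$ after substituting $y=ux$ is the paper's split of the compensated integral at $1$, and your bound $|A_\alpha(z)|\le\delta_I^{-1}\cosh|z|$ is a tidier form of its separated-denominator argument. The analytic-continuation framing is not needed, since the induction alone proves the formula; where you cite the base case as classical, the paper derives it from $y^{-1-\beta}=\Gamma(1+\beta)^{-1}\int_0^\infty t^\beta e^{-ty}\,\mathrm dt$ and Tonelli, and a single integration by parts actually leads to the sine Mellin integral rather than the cosine one.
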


\begin{proof}
Scaling the compensated integral gives
\[
 \int_0^\infty\{1-\cos(ux)-P_M(ux)\}x^{-1-\alpha}\dd x
 =C_\alpha|u|^\alpha.
\]
All boundary terms vanish in $2M$ integrations by parts.  At the $j$th
stage, the boundary product has the form
$R_M^{(j)}(y)y^{-\alpha+j}$; it is
$O(y^{2M+2-\alpha})$ at zero and
$O(y^{2M-\alpha})$ at infinity.  The two exponents have the required signs
because $2M<\alpha<2M+2$.  Since
\[
 R_M^{(2M)}(y)=(-1)^M(1-\cos y),
\]
putting $\beta=\alpha-2M\in(0,2)$ yields
\begin{equation}\label{eq:C-after-parts}
 C_\alpha=
 \frac{(-1)^M}{\alpha(\alpha-1)\cdots(\alpha-2M+1)}
 \int_0^\infty(1-\cos y)y^{-1-\beta}\dd y.
\end{equation}
Using
\[
 y^{-1-\beta}=\frac1{\Gamma(1+\beta)}
 \int_0^\infty t^\beta e^{-ty}\dd t
\]
and
$\int_0^\infty(1-\cos y)e^{-ty}\dd y=\{t(1+t^2)\}^{-1}$,
Tonelli's theorem gives
\begin{align*}
 \int_0^\infty(1-\cos y)y^{-1-\beta}\dd y
 &=\frac1{\Gamma(1+\beta)}
   \int_0^\infty\frac{t^{\beta-1}}{1+t^2}\dd t\\
 &=\frac{\pi}
 {2\Gamma(1+\beta)\sin(\pi\beta/2)}.
\end{align*}
Using
$\alpha(\alpha-1)\cdots(\alpha-2M+1)
=\Gamma(1+\alpha)/\Gamma(1+\beta)$ and
$\sin(\pi\alpha/2)=(-1)^M\sin(\pi\beta/2)$ in
\eqref{eq:C-after-parts} proves \eqref{eq:C-alpha-exact} and its sign.

To identify the remainder, split the compensated integral at one:
\begin{align*}
 A_\alpha(u)
 &=-\int_0^1\{1-\cos(ux)-P_M(ux)\}x^{-1-\alpha}\dd x\\
 &\quad+\int_1^\infty P_M(ux)x^{-1-\alpha}\dd x.
\end{align*}
Uniformly for $u$ in a compact set,
$R_M(ux)=O(x^{2M+2})$ as $x\downarrow0$; hence the compensated integrand is
dominated there by a constant times $x^{2M+1-\alpha}$, which is integrable
because $\alpha<2M+2$.
Termwise integration of the cosine series in the first integral, and of the
finite polynomial in the second, gives \eqref{eq:A-alpha-series}.  The
factorial denominator yields an infinite radius of convergence.  If $\alpha$
stays in a
compact subinterval between the consecutive even integers $2M$ and
$2M+2$, all denominators are uniformly separated from zero for the finitely
many nearby indices, while factorial decay controls the rest.  This proves
the asserted local uniformity.
\end{proof}

The L\'evy exponent of \eqref{eq:CP-X} is
\begin{equation}\label{eq:CP-psi}
 \psi(u)=-\log\phi_{X^{(q)}}(u)
 =2\sum_{r\ge1}d_rh_{\alpha_r}(u).
\end{equation}
Set
\begin{equation}\label{eq:beta-gamma}
 \beta_r=2d_rC_{\alpha_r},
 \qquad
 A(u)=2\sum_{r\ge1}d_rA_{\alpha_r}(u)
     =\sum_{\ell\ge1}\gamma_\ell u^{2\ell}.
\end{equation}
All $\alpha_r$ belong to the compact interval $[a,\alpha_1]$ contained in
$(2M,2M+2)$.  Formula~\eqref{eq:C-alpha-exact} shows that
$\sup_r|C_{\alpha_r}|<\infty$.  Since $\sum_rd_r<\infty$, the sums defining
the fractional part and $A$ converge locally uniformly, and $A$ is even and
entire.  Local uniform convergence on
complex discs permits termwise extraction of the Taylor coefficients by
Cauchy's formula, and the Taylor series of the resulting entire function
converges absolutely on every closed disc.  Consequently, for every
$0\le R<\infty$,
\begin{equation}\label{eq:CP-coefficient-sums}
 \sum_{r\ge1}|\beta_r|R^{\alpha_r}<\infty,
 \qquad
 \sum_{\ell\ge1}|\gamma_\ell|R^{2\ell}<\infty.
\end{equation}
Therefore
\begin{equation}\label{eq:CP-psi-decomposed}
 \psi(u)=\sum_{r\ge1}\beta_r|u|^{\alpha_r}
        +\sum_{\ell\ge1}\gamma_\ell u^{2\ell}.
\end{equation}
When $M$ is odd, the coefficients $\beta_r$ are negative.  Although some
individual coefficients are then negative, the complete expression
\eqref{eq:CP-psi-decomposed} is the nonnegative L\'evy exponent, and absolute
convergence justifies the subsequent separations.

\subsection{Separation of the fractional profile}

For $p\in\K$, summing \eqref{eq:CP-psi-decomposed} over the weights gives
\begin{equation}\label{eq:CP-kernel}
 k_u^{X^{(q)}}(p)=\exp\left\{
 -\sum_{r\ge1}\beta_r|u|^{\alpha_r}V_{\alpha_r}(p)
 -\sum_{\ell\ge1}\gamma_\ell u^{2\ell}V_{2\ell}(p)
 \right\}.
\end{equation}
All interchanges of sums are justified by \eqref{eq:CP-coefficient-sums} and
$0\le V_s\le1$.  Also, $a>2$ gives
$\int x^2\Pi(\dd x)<\infty$ and
\[
 \psi(v)\le\frac{v^2}{2}\int x^2\Pi(\dd x),
\]
so $\sum_j\psi(up_j)<\infty$.  The uniform power-sum tail
\eqref{eq:Vs-tail} and \eqref{eq:CP-coefficient-sums} also show directly
that $k_u^{X^{(q)}}\in C(\K)$.

Suppose that $\mu,\mu'\in\cP(\K)$ give the same output law and put
$\sigma=\mu-\mu'$.  Define the fractional profile associated with the
present exponents by
\begin{equation}\label{eq:CP-profile}
 v^{(q)}(p)=\bigl(V_{\alpha_r}(p)\bigr)_{r\ge1}.
\end{equation}
Fix $0<t_0<1$.  For finite-support multiindices
\[
 m=(m_r)\in\mathbb N_0^{(\mathbb N)},
 \qquad
 k=(k_\ell)\in\mathbb N_0^{(\mathbb N)},
\]
write
\begin{align*}
 |m|&=\sum_rm_r,& m!&=\prod_rm_r!,&
 \beta^m&=\prod_r\beta_r^{m_r},\\
 |k|&=\sum_\ell k_\ell,& k!&=\prod_\ell k_\ell!,&
 \gamma^k&=\prod_\ell\gamma_\ell^{k_\ell},\\
 N(k)&=\sum_{\ell\ge1}\ell k_\ell,&
 \lambda(m,k)&=m\mathbin{\cdot}\alpha+2N(k).
\end{align*}
We only need to isolate the coefficients with $N(k)=0$; recovering the
individual multiindices of the analytic remainder is unnecessary.
Expanding the two exponentials in \eqref{eq:CP-kernel} at
$u=t_0e^{-s}$ produces the coefficients
\begin{align}
 B_{m,k}
 &=\frac{(-1)^{|m|+|k|}\beta^m\gamma^k
 t_0^{\lambda(m,k)}}{m!k!}\notag\\
 &\quad\times
 \int_\K
 \prod_rV_{\alpha_r}(p)^{m_r}
 \prod_\ell V_{2\ell}(p)^{k_\ell}\,\sigma(\dd p).
 \label{eq:CP-Bmk}
\end{align}
This expansion is absolutely summable, since
\begin{equation}\label{eq:CP-TV-bound}
 \sum_{m,k}|B_{m,k}|
 \le\|\sigma\|_{\TV}\exp\left\{
 \sum_r|\beta_r|t_0^{\alpha_r}
 +\sum_\ell|\gamma_\ell|t_0^{2\ell}
 \right\}<\infty.
\end{equation}
Thus
\begin{equation}\label{eq:CP-eta}
 \eta=\sum_{m,k}B_{m,k}\delta_{\lambda(m,k)}
\end{equation}
is a finite signed measure, where terms corresponding to the same atom are
combined.
The coefficient at $(m,k)=(0,0)$ is zero because $\sigma(\K)=0$.
Equality of output characteristic functions becomes
\[
 \int_{[0,\infty)}e^{-s\lambda}\eta(\dd\lambda)=0,
 \qquad s>0.
\]
By Lemma~\ref{lem:signed-laplace}, $\eta=0$, despite the finite accumulation of
the fractional exponents.

Rational independence of $\{2,\alpha_1,\alpha_2,\ldots\}$ gives exactly
\begin{equation}\label{eq:CP-pair-separation}
 m\mathbin{\cdot}\alpha+2N
 =m'\mathbin{\cdot}\alpha+2N'
 \quad\Longrightarrow\quad m=m',\quad N=N'.
\end{equation}
It does not determine the multiindex $k$ arising from the analytic remainder:
several such multiindices may have the same weighted degree $N(k)$.  At
$\lambda_m=m\mathbin{\cdot}\alpha$, with $m\ne0$,
\eqref{eq:CP-pair-separation} forces the pair $(m,N=0)$, and
$N(k)=0$ forces $k=0$.  Therefore
\begin{equation}\label{eq:CP-pure-atom}
 0=\eta(\{\lambda_m\})
 =\frac{(-1)^{|m|}\beta^m t_0^{m\cdot\alpha}}{m!}
 \int_\K\prod_rV_{\alpha_r}(p)^{m_r}\,\sigma(\dd p).
\end{equation}
Every $\beta_r$ is nonzero by \eqref{eq:C-alpha-exact}, so all mixed moments
of the fractional profile \eqref{eq:CP-profile} agree under $\mu$ and
$\mu'$.  Hence the two pushforwards by $v^{(q)}$ have the same integrals
against every cylinder polynomial.  Such polynomials form a unital
point-separating algebra on the compact cube $[0,1]^{\N}$, so
Stone--Weierstrass gives
\begin{equation}\label{eq:CP-profile-law}
 (v^{(q)})_\#\mu=(v^{(q)})_\#\mu'.
\end{equation}

The indices $\alpha_r$ are distinct and converge to $a>1$.
Lemma~\ref{lem:profile-injective}, applied with $s_r=\alpha_r$, therefore
shows that $v^{(q)}$ is a homeomorphism of $\K$ onto its compact image.
Applying its inverse to \eqref{eq:CP-profile-law} gives
\[
 \mu=((v^{(q)})^{-1})_\#(v^{(q)}_\#\mu)
     =((v^{(q)})^{-1})_\#(v^{(q)}_\#\mu')=\mu'.
\]

This proves injectivity in Theorem~\ref{thm:compound-poisson}.  Affinity and
weak continuity follow from Lemma~\ref{lem:kernel-continuity}, since the mark is
centered and integrable.  Compactness of $\cP(\K)$ then makes the continuous
injection a topological embedding.

\section{The four-label inversion}\label{app:xi-n4}

The first level at which simultaneous collision patterns with the same
number of resulting blocks must be distinguished is $n=4$.  In the notation
of Section~\ref{sec:xi-fixed}, put
\begin{equation}\label{eq:xi-n4-rates}
 \begin{aligned}
  u_4&=\lambda_{4;2;2}^\Xi,&
  w_4&=\lambda_{4;2,2;0}^\Xi,\\
  v_4&=\lambda_{4;3;1}^\Xi,&
  z_4&=\lambda_{4;4;0}^\Xi,\\
  R_4&=6u_4+3w_4+4v_4+z_4.&&
 \end{aligned}
\end{equation}
The $14$ states in $D_4$ consist of six labeled $2+1+1$ states, three
labeled $2+2$ states, four labeled $3+1$ states, and the one-block state.
The subgenerator $B_4$ is known from the $n=2,3$ steps.  If $A$ is a
labeled three-block state, a path from $0_4$ ending at $A$ has exactly one
jump, directly from $0_4$: after another jump it has fewer than three blocks
and cannot return to $A$.  Hence
\begin{equation}\label{eq:xi-n4-three-block}
 p_4(t;A)=u_4\kappa_{4,3},
 \qquad
 \kappa_{4,b}=\int_0^te^{-R_4s}e^{-R_b(t-s)}\dd s.
\end{equation}
For each pattern, write $p_{211}$, $p_{22}$ and $p_{31}$ for the endpoint
probability of one specified labeled partition of that pattern; these are
not sums over partitions of the same type.  Let
$u_3=\lambda_{3;2;1}^\Xi$ be the rate of merging one specified pair at level
three, already recovered at the preceding step, and define
\begin{equation}\label{eq:xi-n4-path-integral}
 L_{432}(t)=\int_0^te^{-R_4s}
 \int_0^{t-s}e^{-R_3v}e^{-R_2(t-s-v)}\dd v\,\dd s.
\end{equation}
A specified $2+2$ partition has two three-block predecessors, corresponding
to which pair merged first.  A specified $3+1$ partition has three, one for
each pair within its triple.  From each predecessor the final merger has
rate $u_3$.  Consequently,
\begin{equation}\label{eq:xi-n4-two-block}
 \begin{aligned}
 u_4&=\frac{p_{211}}{\kappa_{4,3}},\\
 w_4&=\frac{p_{22}-2u_4u_3L_{432}(t)}{\kappa_{4,2}},\\
 v_4&=\frac{p_{31}-3u_4u_3L_{432}(t)}{\kappa_{4,2}},\\
 z_4&=R_4-6u_4-3w_4-4v_4.
 \end{aligned}
\end{equation}
The integral definitions remain valid when some exit rates coincide.  The
labeled endpoint row thus separates collision patterns that the block-count
distribution combines.

\section*{Acknowledgments}
The author is grateful to Fortunato Fulvio Bitonto for his patience and his happy heart.
He is also grateful to Irene Crimaldi and Pietro Rigo for their kindness.

OpenAI's ChatGPT was used for language editing and \LaTeX{} formatting, and for assistance
in refining the literature search to focus on work relevant to this paper;
responsibility for the mathematical content rests with the author.

\makeatletter
\def\@biblabel#1{\@defaultbiblabelstyle{#1}}
\def\bibsetup{}
\makeatother
\providecommand{\bysame}{\leavevmode\hbox to3em{\hrulefill}\thinspace}

\end{document}